\newtheorem{theorem}{Theorem}[section]
\newtheorem*{theorem*}{Theorem}
\theoremstyle{plain}
\newtheorem{corollary}[theorem]{Corollary}
\newtheorem{lemma}[theorem]{Lemma}
\newtheorem{proposition}[theorem]{Proposition}
\theoremstyle{definition}
\newtheorem{definition}[theorem]{Definition}
\newtheorem{remark}[theorem]{Remark}
\newtheorem*{remark*}{Remark}
\newcommand{\CC}{\mathbb{C}}
\newcommand{\FF}{\mathbb{F}}
\newcommand{\RR}{\mathbb{R}}
\newcommand{\frakbb}{\mathfrak{b}}
\newcommand{\frakgg}{\mathfrak{g}}
\newcommand{\calF}{\mathcal{F}}
\newcommand{\calH}{\mathcal{H}}
\newcommand{\calK}{\mathcal{K}}
\newcommand{\calL}{\mathcal{L}}
\newcommand{\ii}{\sqrt{-1}}
\newcommand{\dt}{\left.\frac{d}{dt}\right|_{t=0}}
\DeclareMathOperator{\grad}{grad}
\DeclareMathOperator{\Val}{Val}
\DeclareMathOperator{\vol}{vol}
\DeclareMathOperator{\img}{im}
\DeclareMathOperator{\Klain}{Kl}
\DeclareMathOperator{\spn}{span}
\DeclareMathOperator{\Curv}{Curv}
\DeclareMathOperator{\Area}{Area}
\DeclareMathOperator{\glob}{glob}
\DeclareMathOperator{\Grass}{Gr}
\DeclareMathOperator{\Vector}{Vec}
\DeclareMathOperator{\Angular}{Ang}
\begin{document}
\author{Thomas Wannerer}
\address
	{
		\begin{flushleft}	
			ETH Z\"urich\\
			Departement Mathematik\\
			HG F 28.3\\
			R\"amistrasse 101\\
			8092 Z\"urich, Switzerland \\
		\end{flushleft}
	}	
	
	\email{thomas.wannerer@math.ethz.ch}

\title{The module of unitarily invariant area measures}
\thanks{Supported by DFG grant BE 2484/5-1, FWF grant P22388, and SNF grant 200021-140467}

\begin{abstract}
 The hermitian analog of Aleksandrov's area measures of convex bodies is investigated. A characterization of those area measures which arise as the first variation of unitarily invariant valuations is established. General smooth area measures are shown to form a module over smooth valuations and the module of unitarily invariant area measures is described explicitly.
\end{abstract}

\maketitle

\section{Introduction}

The fundamental result in integral geometry is the principal kinematic formula, which goes back to the work of Blaschke \cite{blaschke55} and was generalized by Chern \cite{chern52} and Federer \cite{federer59}. It states that
\begin{equation}\label{eq_PKF}
\int_{\overline{O(n)}} \chi(K\cap gL)\; dg= \sum_{i+j=n} \binom{n}{i}^{-1} \frac{\omega_i\omega_j }{\omega_n}\mu_i(K) \mu_j(L),
\end{equation}
where $\chi$ denotes the Euler characteristic, $K$ and $L$ are convex bodies in $\RR^n$ (i.e.\ nonempty, compact, convex subsets), $\overline{O(n)}=O(n)\ltimes \RR^n$ is the isometry group of $\RR^n$, $\omega_k$ denotes the volume of the $k$-dimensional euclidean unit ball, and the $\mu_k$ are the intrinsic volumes, see e.g.\ \cite{klain_rota97}. In the linear space $\RR^n$, replacing the intersection in \eqref{eq_PKF} by the vector sum and the Euler characteristic by the $n$-dimensional volume, yields the additive principal kinematic formula 

\begin{equation}\label{eq_APKF}
\int_{O(n)} \vol_n(K + gL)\; dg= \sum_{i+j=n}  \binom{n}{i}^{-1} \frac{\omega_i\omega_j }{\omega_n} \mu_i(K) \mu_j(L),
\end{equation}
which is equivalent to the intersectional principal kinematic formula \eqref{eq_PKF}.

Although already Nijenhuis \cite{nijenhuis74} suspected an underlying algebraic reason for the structure of the principal kinematic formulas, only through the work of Fu \cite{fu06} and Bernig and Fu \cite{bernig_fu06}, heavily based on the fundamental work of Alesker in the theory of valuations \cites{alesker01, alesker03,alesker04,alesker06,alesker06b,alesker07a, alesker_fu08}, the algebraic nature of the principal kinematic formulas was uncovered. Their results provide the tools to obtain explicit kinematic formulas in more general settings. In particular, the orthogonal group may be replaced by any closed subgroup $G\subset O(n)$ acting transitively on the unit sphere, see \cites{bernig09,bernig11,bernig_fu11}. Recently, this algebraic approach was successfully applied by Bernig and Fu \cite{bernig_fu11} to obtain explicit principal kinematic formulas for $G=U(n)$.

There are two ways of localizing the intrinsic volumes: Federer's curvature measures $C_k(K,\;\cdot\;)$ \cite{federer59}, which are measures on $\RR^n$, and Aleksandrov's area measures $S_k(K,\;\cdot\;)$ \cite{aleksandrov37} which are measures on the unit sphere $S(\RR^n)$.
If $K$ is  strictly convex and has a smooth boundary, then curvature and area measures can be expressed as integrals of the elementary symmetric functions in the principal curvatures and principal radii of curvature, respectively. There also exist local versions of the kinematic formulas \eqref{eq_PKF} and \eqref{eq_APKF}. In the latter case, Schneider~\cite{schneider75} proved that
\begin{equation}\label{eq_localadd}\int_{O(n)} S_{n-1} (K+gL, A\cap gB)= \frac{1}{n\omega_n}\sum_{i+j=n-1}\binom{n-1}{i}  S_i(K,A) S_j(L,B)\end{equation}
for all Borel sets $A,B\subset S^{n-1}$.  Very recently, Bernig, Fu, and Solanes \cite{bernig_etal12} established a local version of \eqref{eq_PKF} in hermitian vector spaces (and in fact in all complex spaces forms; see also \cite{abardia_etal12}).

The purpose of this article is to investigate the class of unitarily invariant area measures and to provide the algebraic machinery needed to establish an explicit local version of \eqref{eq_APKF} in hermitian vector spaces. The crucial construction in \cite{bernig_etal12} uses the Alesker product of smooth valuations to turn the space of smooth curvature measures into a module over smooth valuations. Building on this idea, we show that the Bernig-Fu convolution of smooth valuations can be used to define a module structure on the space of smooth area measures. This module structure restricts to unitarily invariant area measures, which are precisely those measures which will appear in the complex version of \eqref{eq_localadd}.

Let us now describe the results of the paper. In Section~\ref{sec_valandarea}, we develop a general theory of smooth area measures. We define the globalization map $\glob$, the first variation map $\delta$, and the centroid map $C$. The first two constructions correspond to those for curvature measures \cites{bernig_fu11,bernig_etal12}; the third one, however, is new and possesses no analog in the theory of curvature measures. Let $\Val^{sm}=\Val^{sm}(V)$ denote the space of smooth, translation-invariant valuations on a euclidean vector space $V$.  We show that the Bernig-Fu convolution can be used to define a module structure on $\Area=\Area(V)$, the vector space of smooth area measures on $V$. We show that the globalization, the first variation and the centroid map are compatible with this module structure.   
 
If $V=\CC^n$, then the module structure can be restricted to unitarily invariant area measures and valuations; we denote these spaces by $\Area^{U(n)}$ and $\Val^{U(n)}$. In Section~\ref{sec_unitarilyinv}, we begin a detailed investigation of unitarily invariant area measures. Our first result on the structure of $\Area^{U(n)}$ relates the kernel of the centroid map $C$ with the image of the first variation map $\delta$. The first variation map $\delta \colon \Val^{sm}\rightarrow \Area$ is uniquely determined by the property that
$$\dt \phi(K +tL)=\int_{S(V)} h_L \; d(\delta \phi(K))$$
for all convex bodies $K$ and $L$. Here $S(V)$ denotes the unit sphere of $V$ and $h_L(u)=\sup_{x\in L} \langle u,x\rangle $ the support function of $L$. Given an area measure $\Psi$ and a convex body $K$, the centroid map $C$ yields the centroid of the measure $\Psi(K)$ (see Section~\ref{sec_valandarea} for the precise definitions). 

\begin{theorem*}
Let $\Psi\in \Area^{U(n)}$. Then 
$$C(\Psi)=0\quad \text{if and only if}\quad \Psi=\delta\phi $$
for some $\phi\in\Val^{U(n)}$.
\end{theorem*}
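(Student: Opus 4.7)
Suppose $\Psi=\delta\phi$ for some $\phi\in\Val^{U(n)}$. For every convex body $K$ and every $v\in V$, applying the defining property of $\delta$ to the singleton $L=\{v\}$ (whose support function is $u\mapsto\langle u,v\rangle$) and invoking the translation invariance of $\phi$ yields
$$0=\frac{d}{dt}\bigg|_{t=0}\phi(K+tv)=\int_{S(V)}\langle u,v\rangle\,d\Psi(K,u)=\langle v,C(\Psi)(K)\rangle.$$
Since $v$ is arbitrary, $C(\Psi)=0$.

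\textbf{Hard direction.} Assume $C(\Psi)=0$. By the McMullen-type homogeneous decomposition of smooth area measures (developed in Section~\ref{sec_valandarea}), we may assume $\Psi$ is homogeneous of degree $k$, and seek $\phi\in\Val^{U(n)}$ of degree $k+1$. Euler's relation $(k+1)\phi(K)=\int h_K\,d(\delta\phi)(K)$ forces the normalization
$$\phi(K):=\frac{1}{k+1}\int_{S(V)}h_K(u)\,d\Psi(K,u).$$
Writing $\Psi$ in the Alesker--Bernig--Fu framework as the pushforward of a smooth form $\omega_\Psi$ on the cosphere bundle $V\times S(V)$ along $(x,u)\mapsto u$, and using the identity $h_K(u)=\langle x,u\rangle$ valid on $N(K)$, the candidate rewrites as
$$\phi(K)=\frac{1}{k+1}\int_{N(K)}\langle x,u\rangle\,\omega_\Psi,$$
which is manifestly a smooth valuation in the sense of Alesker. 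Its $U(n)$-invariance is inherited from that of $\Psi$ together with the equivariance of $h_K$; translation invariance is precisely where the hypothesis $C(\Psi)=0$ enters, because translating $K$ by $v$ shifts the integrand by $\langle v,u\rangle\,\omega_\Psi$, whose integral against $N(K)$ equals $\langle v,C(\Psi)(K)\rangle=0$.

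\textbf{Verifying $\delta\phi=\Psi$ and the main obstacle.} Expanding $h_{K+tL}=h_K+th_L$ and differentiating at $t=0$ gives
$$\frac{d}{dt}\bigg|_{t=0}\phi(K+tL)=\frac{1}{k+1}\int h_L\,d\Psi(K)+\frac{1}{k+1}\frac{d}{dt}\bigg|_{t=0}\int h_K\,d\Psi(K+tL).$$
Polarizing $\Psi(K+tL)=\sum_j\binom{k}{j}t^j\,\bar\Psi(K[k-j],L[j];\cdot)$, the second summand equals $\tfrac{k}{k+1}\int h_K\,d\bar\Psi(K[k-1],L;\cdot)$, and the reciprocity
$$\int h_K\,d\bar\Psi(K[k-1],L;\cdot)=\int h_L\,d\bar\Psi(K[k];\cdot),$$
generalising the classical symmetry of mixed volumes, turns this into $\tfrac{k}{k+1}\int h_L\,d\Psi(K)$. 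Summing the two contributions yields $\int h_L\,d\Psi(K)$ for every convex body $L$, so by the uniqueness clause in the definition of $\delta$ we conclude $\delta\phi=\Psi$. The principal technical obstacle is establishing the above reciprocity for general smooth area measures: it is classical for Aleksandrov's $S_k$, but in the abstract smooth category it needs justification, most naturally via the differential-form representation of $\Area$ and the properties of the first variation map set up in Section~\ref{sec_valandarea}.
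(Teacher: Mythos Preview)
Your easy direction is correct and is exactly Lemma~\ref{lem_centroids}. The hard direction, however, has a genuine gap that cannot be closed along the lines you suggest.

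The ``reciprocity''
\[
\int_{S(V)} h_K\,d\bar\Psi(K[k-1],L;\cdot)=\int_{S(V)} h_L\,d\Psi(K)
\]
is \emph{not} a general property of smooth area measures. Setting $L=\{v\}$, the left-hand side vanishes (since $\Psi$ is translation-invariant, $\bar\Psi(K[k-1],\{v\};\cdot)=0$), while the right-hand side equals $\langle v,C(\Psi)(K)\rangle$. Hence the reciprocity already for singletons is equivalent to $C(\Psi)=0$, and in particular fails for $\Psi=N_{k,q}$. So no argument ``via the differential-form representation of $\Area$'' can establish it without using the hypothesis. More decisively, one checks that if $\Psi=\delta\psi$ for some $\psi\in\Val^{sm}_{k+1}$, then both sides equal $(k+1)\bar\psi(K[k],L)$ and the identity holds; conversely, if the reciprocity holds for all $L$, your own computation produces a translation-invariant $\phi$ with $\delta\phi=\Psi$. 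Thus the reciprocity for all $L$ is \emph{equivalent} to $\Psi\in\img\delta$, which is precisely the conclusion you are trying to prove. The argument is circular: you have reduced the theorem to itself.

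The paper's proof is entirely different and is specific to the unitary setting: it is a dimension count. One shows (Proposition~\ref{prop_basis}) that $\dim\Area^{U(n)}=2\dim\Val^{U(n)}-2n-1$ by combining the upper bound coming from the spanning set $\{B_{k,q}\}\cup\{\Gamma_{k,q}\}$ with two lower bounds: $\dim\ker C\geq\dim\img(\delta|_{\Val^{U(n)}})=\dim\Val^{U(n)}-1$ from $C\circ\delta=0$, and $\dim\img C\geq\dim\Val^{U(n)}-2n$, the latter obtained by applying Theorem~\ref{thm_angularity} to the angular measures $\Delta_{k,q}$ (Lemma~\ref{lem_deltaangular}). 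Matching the two forces $\dim\ker C=\dim\Val^{U(n)}-1$, so $\ker C=\img(\delta|_{\Val^{U(n)}})$. None of this relies on any reciprocity-type identity.
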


The centroid map sends area measures on $V$ to valuations on $V$ with values in $V$. If the area measure is unitarily invariant, then the resulting $\CC^n$-valued valuation is unitarily equivariant. Using the explicit description of the isotypical decomposition of $\Val^{sm}$ under the action of the orthogonal group $O(V)$, we determine the dimension of $\Vector^{U(n)}$, the vector space of unitarily equivariant, translation-invariant, continuous, $\CC^n$-valued valuations. We denote by $\Vector^{U(n)}_k\subset\Vector^{U(n)}$ the subspace of $k$-homogeneous valuations. Observe that $\Vector^{U(n)}_k$ is a complex vector space. 

\begin{theorem*}
 $$\dim_\CC \Vector^{U(n)}_k=\dim_\RR \Val_k^{U(n)}-1.$$
\end{theorem*}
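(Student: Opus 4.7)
The plan is to compute $\dim_\CC \Vector^{U(n)}_k$ representation-theoretically and compare it with $\dim_\RR \Val_k^{U(n)}$. A $\CC^n$-valued, translation-invariant, continuous, $U(n)$-equivariant, $k$-homogeneous valuation is precisely a $U(n)$-invariant element of $\Val^{sm}_k \otimes_\RR \CC^n$, where $U(n)$ acts on $\Val^{sm}_k$ via its natural action on $V$ and on $\CC^n$ via the standard representation. Setting $V_k^\CC := \Val^{sm}_k \otimes_\RR \CC$ and using the isomorphism $\CC^n \otimes_\RR \CC \cong \CC^n \oplus \overline{\CC^n}$ of complex $U(n)$-representations, one obtains
\begin{equation*}
\dim_\CC \Vector^{U(n)}_k = [\overline{\CC^n}:V_k^\CC]_{U(n)}, \qquad \dim_\RR \Val_k^{U(n)} = [\mathbf{1}:V_k^\CC]_{U(n)},
\end{equation*}
where $[\rho:W]_G$ denotes the multiplicity of the irreducible $G$-representation $\rho$ in $W$.

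Next I would invoke the explicit multiplicity-free decomposition
\begin{equation*}
V_k^\CC = \bigoplus_{\lambda \in \Lambda_k} \Gamma_\lambda
\end{equation*}
of $\Val^{sm}_k$ under $O(V) = O(2n)$ due to Alesker and Alesker--Bernig--Schuster, with $\Lambda_k$ an explicit set of Young diagrams and each $\Gamma_\lambda$ an irreducible $O(2n)$-representation. For each $\lambda$ one applies the branching rule from $O(2n)$ to $U(n)$ to compute $[\mathbf{1}:\Gamma_\lambda]_{U(n)}$ and $[\overline{\CC^n}:\Gamma_\lambda]_{U(n)}$. The trivial component $\Gamma_0$, spanned by the $k$-th intrinsic volume $\mu_k$, is present for every $0 \leq k \leq 2n$, contributes $1$ to the first multiplicity and $0$ to the second, and accounts for the ``$-1$'' in the statement.

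The remaining task, which I expect to be the main obstacle, is the identity
\begin{equation*}
\sum_{\lambda \in \Lambda_k \setminus \{0\}} \Bigl([\mathbf{1}:\Gamma_\lambda]_{U(n)} - [\overline{\CC^n}:\Gamma_\lambda]_{U(n)}\Bigr) = 0.
\end{equation*}
This does not hold term by term: individual contributions are generically nonzero (for instance $\Gamma_{(1)}$ contributes $-1$ while $\Gamma_{(1,1)}$ contributes $+1$), so the argument must exploit the precise shape of $\Lambda_k$. The relevant constraints --- the bound $\lambda_1 \leq \min(k, 2n-k)$, the parity condition separating $\Val^{sm,+}_k$ from $\Val^{sm,-}_k$, and the restrictions on the number of nonzero rows --- should organize the contributions either by a natural involution on $\Lambda_k \setminus \{0\}$ pairing partitions of opposite sign, or by a direct character computation using the Weyl character data for the symmetric pair $(O(2n), U(n))$. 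Once this cancellation is established, subtracting the two multiplicity sums yields exactly $1$ and the theorem follows.
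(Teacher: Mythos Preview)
Your strategy is the paper's strategy: decompose $\Val_{k,\CC}$ into $SO(2n)$-irreducibles using the Alesker--Bernig--Schuster theorem and then count $U(n)$-invariants after tensoring with the standard representation. The set-up reducing $\dim_\CC\Vector^{U(n)}_k$ to the multiplicity $[\overline{\CC^n}:V_k^\CC]_{U(n)}$ is correct, and once the combinatorics is right the argument goes through exactly as you outline.

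The gap is that your description of $\Lambda_k$ is wrong, and this is not a cosmetic issue---it is precisely what makes the final step easy rather than hard. The actual constraints from \cite{alesker_etal} are
\[
|\lambda_i|\neq 1\ \text{for all }i,\qquad |\lambda_2|\leq 2,\qquad \lambda_i=0\ \text{for }i>\min\{k,2n-k\};
\]
the bound is on the \emph{depth} of $\lambda$, not on $\lambda_1$, and there is no parity condition of the type you describe. In particular your illustrative examples $\Gamma_{(1)}$ and $\Gamma_{(1,1)}$ do not occur in $\Lambda_k$ at all, so the cancellation you are worried about is not the one that actually happens. With the correct $\Lambda_k$ the picture is much cleaner: by Helgason's theorem the $U(n)$-spherical weights in $\Lambda_k$ are exactly $(2^{2m})$ for $0\leq m\leq \min\{\lfloor k/2\rfloor,\lfloor (2n-k)/2\rfloor\}$, and (using Klimyk's formula for $\Gamma_\lambda\otimes\CC^{2n}$ together with Helgason again, as the paper does) the weights with $[\overline{\CC^n}:\Gamma_\lambda]_{U(n)}=1$ are exactly $(3,2^{2m-1})$ for $1\leq m\leq \min\{\lfloor k/2\rfloor,\lfloor (2n-k)/2\rfloor\}$. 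The involution you were hoping for is simply $(2^{2m})\leftrightarrow(3,2^{2m-1})$; the depth constraint makes the two ranges of $m$ agree, and only $\lambda=0$ is left unpaired. So the ``main obstacle'' you anticipate dissolves once you have the correct list of weights.
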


As an application of this result, we obtain a new characterization of the Steiner point map in hermitian vector spaces.

In Section~\ref{sec_module}, we explicitly determine the module structure of $\Area^{U(n)}$. The main result is the following theorem. 
Recall that equipped with the Alesker product $\Val^{U(n)}$ is an algebra generated by two special elements $s$ and $t$, see \cite{fu06}. In the theorem below we consider  $\Val^{U(n)}
\oplus \Val^{U(n)}$ as a $\Val^{U(n)}$-module under the diagonal action.

\begin{theorem*}
The module of unitarily invariant area measures is generated by two elements. More precisely,
$$\Area^{U(n)}\cong (\Val^{U(n)}\oplus \Val^{U(n)})/I_n,$$
where $I_n$ is the submodule generated by the following pairs of valuations
$$(p_n,-q_{n-1})\quad \text{and}\quad (0,p_n),$$  
which are determined by the Taylor series expansions
$$\frac{1}{1+tx+sx^2}=\sum_{k=0}^\infty p_k(s,t) x^k$$ 
and 
$$ -\frac{1}{(1+tx+sx^2)^2}=\sum_{k=0}^\infty q_k(s,t) x^k.$$
\end{theorem*}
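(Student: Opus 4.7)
The plan is to exhibit two explicit generators $\Phi_1\in\Area^{U(n)}_0$ and $\Phi_2\in\Area^{U(n)}_1$ for the $\Val^{U(n)}$-module $\Area^{U(n)}$, verify that they satisfy the stated relations, prove surjectivity of the induced map, and conclude by a dimension count. A natural choice is to take $\Phi_1$ to be the (normalized) unique $U(n)$-invariant area measure of degree zero, characterized by $\glob(\Phi_1)=\chi$, and $\Phi_2$ a distinguished element of $\Area^{U(n)}_1$ whose globalization is a nonzero multiple of $t$. The map to study is
$$\Pi\colon \Val^{U(n)}\oplus\Val^{U(n)}\longrightarrow\Area^{U(n)},\qquad (\phi,\psi)\mapsto \phi*\Phi_1+\psi*\Phi_2,$$
where $*$ denotes the convolution action from Section~\ref{sec_valandarea}. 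The requirement that $(p_n,-q_{n-1})$ be homogeneous forces the generator degrees to differ by $1$, and the two minimal choices $0$ and $1$ are the right ones.

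To verify the relations $p_n*\Phi_1=q_{n-1}*\Phi_2$ and $p_n*\Phi_2=0$, I would apply each of the structural maps $\glob$, $\delta$ and $C$ to both sides; each is a $\Val^{U(n)}$-module map, so the check reduces to identities in $\Val^{U(n)}$, in $\Area^{U(n)}$ of strictly smaller degree (opening an induction), and in $\Vector^{U(n)}$. On the algebra side, Fu's presentation of $\Val^{U(n)}$ gives the defining identities $p_{n+1}=p_{n+2}=0$; differentiating $(1+tx+sx^2)\sum p_kx^k=1$ with respect to $x$ produces the $q_k$ as the relevant shifted Taylor coefficients, which is precisely the source of $q_{n-1}$ in the first relation. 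The centroid-dimension theorem ensures compatibility of the vector-valued residues, and the first-variation theorem closes the verification. This yields $I_n\subseteq\ker\Pi$.

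Surjectivity of $\Pi$ proceeds through the globalization map. Given $\Psi\in\Area^{U(n)}$, writing $\glob(\Psi)=\phi\cdot\chi$ (possible since $\Val^{U(n)}$ is cyclic over itself) and replacing $\Psi$ by $\Psi-\phi*\Phi_1$ reduces to the case $\Psi\in\ker\glob$; on this subspace, the centroid theorem and the first-variation map allow one to absorb the remaining degrees of freedom into the submodule generated by $\Phi_2$. The final step is a Hilbert series comparison. On the domain side, the Hilbert series of $(\Val^{U(n)})^2/I_n$ is computed from a short free resolution encoding the two relations and the grading shifts by $0$ and $1$, together with Fu's explicit Hilbert series for $\Val^{U(n)}$. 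On the target side, the exact sequence
$$0\to\delta(\Val^{U(n)})\to\Area^{U(n)}\to\img C\to 0$$
coming from the centroid theorem, combined with $\ker\delta=\RR\chi$ and the computation of $\img C$ via the centroid-dimension theorem, determines the Hilbert series of $\Area^{U(n)}$. Agreement of the two series then forces $\ker\Pi=I_n$ and completes the proof.

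The main obstacle I expect lies in the precise verification of the two relations. The coefficient $q_{n-1}$, coming from the squared generating function $-1/(1+tx+sx^2)^2$, is the signature of a derivative, and the cleanest explanation is likely via the Alesker-Fourier transform, which exchanges the Alesker product presentation of $\Val^{U(n)}$ with the Bernig-Fu convolution used to define the module action on $\Area^{U(n)}$. Under this correspondence the two generators of $I_n$ should correspond to the two defining relations $p_{n+1}=p_{n+2}=0$ of $\Val^{U(n)}$; extracting the exact form of $q_{n-1}$ from this correspondence, and verifying that no spurious lower-order corrections appear, is the technical heart of the argument.
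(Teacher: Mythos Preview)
There is a fundamental error in your choice of generators. The module action of $\Val^{U(n)}$ on $\Area^{U(n)}$ is defined through the Bernig--Fu convolution, for which $\vol_{2n}$ is the identity and convolving with an element of degree $j$ sends $\Area_k^{U(n)}$ to $\Area_{j+k-2n}^{U(n)}$. In particular, if $\Phi_1\in\Area^{U(n)}_0$, then $\phi*\Phi_1=0$ for every $\phi\in\Val^{U(n)}$ of degree $<2n$, so the submodule generated by $\Phi_1$ is the one-dimensional line $\RR\Phi_1$. The same reasoning shows that a generator in degree $1$ produces at most a two-dimensional submodule. Thus your map $\Pi$ cannot be surjective, and the entire argument collapses at this point. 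The correct generators must sit at the \emph{top} of the grading: the paper uses $B_{2n-1,n-1}\in\Area^{U(n)}_{2n-1}$ and $\Gamma_{2n-2,n-1}\in\Area^{U(n)}_{2n-2}$, acted on by $\widehat{\phi}*(\,\cdot\,)$, which is precisely why the Fourier transform appears.

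Even after repairing the generators, your proposed verification of the relations is not viable. The maps $\glob$, $\delta$, $C$ do not jointly separate area measures, so checking an identity in $\Area^{U(n)}$ by applying these three maps does not suffice. (Also, Fu's relations are $f_{n+1}=f_{n+2}=0$, not $p_{n+1}=p_{n+2}=0$; and the element $p_n$ itself is \emph{nonzero} in $\Val^{U(n)}$, so you cannot hope to deduce $\frakbb(p_n)=\frakgg(q_{n-1})$ from algebraic vanishing in $\Val^{U(n)}$ alone.) The paper proceeds instead by computing the action of $\widehat t$ and $\widehat s$ explicitly on the basis $\{B_{k,q}\}\cup\{\Gamma_{k,q}\}$, showing that the $\Gamma$-span is a submodule with injective globalization, and then establishing the key combinatorial identity $\frakbb(p_n)\in\spn\{\Gamma_{k,q}\}$ by a direct computation of $\frakbb(t^{2j}u^i)$ modulo that span. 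Surjectivity and the exact determination of $\ker h$ then follow from these explicit formulas together with dimension counts; your Hilbert-series comparison is in the right spirit for the final step, but it cannot replace the concrete verification of the two relations.
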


We note that the above theorem fits beautifully with Fu's description of the algebra of unitarily invariant valuations \cite{fu06} (see also Theorem~\ref{thm_fu}).

\section{Valuations and area measures} \label{sec_valandarea} 

\subsection{Definitions and results from valuation theory} Throughout this article $V$ will denote a finite-dimensional euclidean vector space equipped 
with the inner product $\langle\; ,\; \rangle$ and the norm $|\cdot|$. We put $\Grass_k=\Grass_k(V)$ for the Grassmannian of $k$-dimensional, linear subspaces of $V$. 
 We denote by $\calK(V)$ the space of convex bodies, i.e.\ nonempty, compact, convex subsets of $V$, equipped with the Hausdorff metric, and we write $\calK^{sm}(V)\subset\calK(V)$ for the subset of convex bodies with nonempty interior whose boundary is an embedded smooth submanifold of $V$ and for which all principal curvatures are positive. We put
$$\omega_k=\frac{\pi^{\frac{k}{2}}}{\Gamma(\frac{k}{2}+1)}$$
for the volume of the $k$-dimensional euclidean unit ball.

Let $A$ be an abelian semigroup. A (convex) valuation on $V$ is a map $\phi:\calK(V)\rightarrow A$ such that
$$\phi(K\cup L)+ \phi(K\cap L)=\phi(K)+\phi(L),$$
whenever $K, L, K\cup L\in\calK(V)$. 
If $A=\RR$, we speak of scalar valuations. The simplest examples of scalar valuations are given by the Euler characteristic $\chi$ and the Lebesgue measure $\vol_n$, $n=\dim V$. For the purposes of this article it is sufficient to consider valuations with values in a (finite-dimensional) vector space and for these valuations we have a particularly rich theory at our disposal. We note that also valuations with values in $A=\calK(W)$, where $W$ is some vector space, have been extensively studied, in particular in connection with affine isoperimetric inequalities, see e.g.\ \cites{abardia_bernig11,abardia12,ludwig02,ludwig03,haberl11,haberl_schuster09, ludwig06,ludwig10,lutwak_etal00, lutwak_etal10,schuster10,schuster08,wannerer12}.

After these basic definitions, we recall now some definitions and results from the theory of scalar valuations.
 For an overview of the subject the reader is advised to consult the survey articles \cites{bernig12a, alesker07b}.
 A good introduction to the classical theory of valuations is the book \cite{klain_rota97}. For recent important results see \cite{alesker99} and \cite{ludwig_reitzner10}. 
We denote by $\Val=\Val(V)$ the space of translation-invariant, continuous, scalar valuations and by $\Val^{sm}\subset\Val$ the dense subspace of smooth valuations. Recall from \cite{alesker03}
 that  a valuation $\phi\in\Val$ is called smooth if 
$g\mapsto g\cdot \phi$ is a smooth map from $GL(V)$ to $\Val$, where $g\cdot \phi(K) = \phi(g^{-1} K)$ for every $K\in \calK(V)$ and $GL(V)$ denotes the general linear group.
In the following every valuation is tacitly assumed to be translation-invariant and at least continuous.
 A family of examples of smooth valuations is given by 

$$\phi_A(K)=\vol_n(A+K), \qquad A\in\calK^{sm}.$$ 
A valuation $\phi$ is called homogeneous of degree $k$ if $\phi(t K)=t^k \phi(K)$ for $t>0$. We call $\phi$ even if $\phi(-K)=\phi(K)$; $\phi$ is called odd if $\phi(-K)=-\phi(K)$. 

Let $\Val_k^+\subset\Val$ denote the subspace of $k$-homogeneous and even valuations. It is well-known \cite{klain00} that the restriction of $\phi\in \Val_k^+$ to a $k$-dimensional subspace $E\in\Grass_k$ is proportional to the $k$-dimensional volume. Denoting this proportionality factor by $\Klain_\phi(E)$, we obtain a function on the Grassmannian called the Klain function of $\phi$. A theorem of Klain \cite{klain00} states that the map which sends $\phi$ to its Klain function $\Klain_\phi$ is injective.

One of the striking features of smooth valuations is that they exhibit a rich algebraic structure.  We start our discussion of the various algebraic operations on valuations with Alesker's Fourier transform  $\FF:\Val^{sm}\rightarrow\Val^{sm}$ (see \cite{alesker11}). For the sake of brevity we will sometimes simply write $\widehat\phi$ instead of $\FF\phi $.
In this article we only use the Fourier transform for even valuations and in this case it is uniquely determined by the equation
$$\Klain_{\widehat\phi}(E)=\Klain_\phi(E^\perp),\qquad E\in \Grass_k.$$
In particular, we see that $\FF$ is an involution on the space of even valuations. Consider for example the $k$-th intrinsic volume $\mu_k\in\Val_k^{sm}$. Since the $k$-th intrinsic volume of a $k$-dimensional convex body equals precisely its $k$-dimensional volume, we have $\Klain_{\mu_k}=1$ and therefore
\begin{equation}\label{eq_fintrinsic}
\widehat\mu_k=\mu_{n-k}.
\end{equation}
Bernig and Fu introduced in \cite{bernig_fu06} a continuous, commutative convolution product on $\Val^{sm}$. The convolution possesses---and is in fact characterized by---the property that for any valuation $\psi\in\Val^{sm}$ and $A\in \calK^{sm}$
\begin{equation}\label{eq_convprod}
\phi_A*\psi=\psi(\; \cdot\; + A).
\end{equation}
As an important example let us compute $\mu_{n-1}*\psi$. We have
$$\mu_{n-1}(K)=\frac{1}{2}\dt \vol_n(K+tB(V)),$$
where $B(V)$ denotes the unit ball of the euclidean vector space $V$. By the continuity of the convolution product and by \eqref{eq_convprod} we obtain
\begin{equation}\label{eq_convintrinsic}
\mu_{n-1}*\psi=\frac{1}{2}\dt \psi(\;\cdot\;+tB(V))
\end{equation}
whenever $\psi\in\Val^{sm}$. 
The convolution product is related to the Alesker product \cite{alesker04} via the Fourier transform
\begin{equation}\label{eq_fourierhom}
\FF(\phi\cdot \psi)=\FF\phi * \FF\psi,
\end{equation}
see \cites{bernig_fu06, alesker11}.

Let $S(V)$ denote the unit sphere of $V$ and write $SV=V\times S(V)$ for the sphere bundle of $V$. Since $SV$ is a cartesian product, there are two natural projections $\pi_1: SV\rightarrow V$ and $\pi_2: SV\rightarrow S(V)$. 
It is well-known that each translation-invariant, smooth differential form $\omega\in\Omega^{n-1}(SV)$, gives rise to a smooth, translation-invariant valuation via integration,
$$K\mapsto\int_{N(K)}\omega.$$
Here $N(K)$ denotes the normal cycle of $K\in\calK(V)$, see \cites{fu94, alesker_fu08}. Moreover, every smooth, translation-invariant valuation can be written in the form 
$$c\vol_n +\int_{N(K)}\omega$$
with some constant $c\in \RR$ and some $\omega$ as above, see \cite{alesker06}. 

We denote by $\Omega^{n-1}(SV)^{tr}\subset \Omega^{n-1}(SV)$ the subspace of translation-invariant forms.
 The kernel of the map $\Omega^{n-1}(SV)^{tr}\rightarrow \Val^{sm}$ given by integration with respect to the normal cycle was determined by Bernig and Br\"ocker \cite{bernig_broecker07} using the Rumin differential operator. The Rumin differential operator \cite{rumin94} is defined on a general contact manifold, but for our purposes it is sufficient to consider it only in the special case of the sphere bundle (we refer to \cite{blair10} for all notions from contact geometry). Let $\alpha$ denote the canonical contact form on the sphere bundle $SV$. 
The Rumin differential $D: \Omega^{n-1}(SV)\rightarrow\Omega^{n}(SV)$ is a second order differential operator given by
$$D\omega=d(\omega + \alpha\wedge\xi),$$
where $\xi \in\Omega^{n-2}(SV)$ is chosen such that $d\omega+d\alpha \wedge \xi=0$ when restricted to the contact plane. In particular, $D\omega$ is a multiple of $\alpha$.

\begin{theorem}[Bernig and Br\"ocker \cite{bernig_broecker07}]\label{thm_kernel}
Suppose $\omega\in \Omega^{n-1}(SV)^{tr}$. The valuation
$$\phi(K)=\int_{N(K)}\omega$$
is the zero valuation if and only if $D\omega=0$ and $\phi(\{v\})=0$ for some point $v\in V$.
\end{theorem}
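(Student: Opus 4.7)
The plan is to prove the two implications separately. The reverse direction ($D\omega=0$ together with $\phi(\{v\})=0$ imply $\phi\equiv 0$) follows directly from Stokes' theorem combined with the Legendrian property of normal cycles, while the forward direction ($\phi\equiv 0$ implies $D\omega=0$) is the substantive part and requires a local flexibility argument for normal cycles.

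For the reverse direction, I would first observe that translation invariance of $\omega$ makes $\phi(\{v\})$ independent of $v$, so vanishing at a single point forces vanishing at every point. Given $K\in\calK^{sm}(V)$, I would use the smooth homotopy $K_t=tK$, $t\in[0,1]$, and let $\Sigma\subset SV$ denote the resulting $n$-dimensional chain with $\partial\Sigma=N(K)-N(\{0\})$. Because $N(K_t)$ is Legendrian for every $t$, $\alpha$ vanishes on each $N(K_t)$, so
\[
\int_{N(K_t)}\omega=\int_{N(K_t)}(\omega+\alpha\wedge\xi).
\]
Stokes' theorem applied to the primitive $\omega+\alpha\wedge\xi$ then yields
\[
\phi(K)-\phi(\{0\})=\int_\Sigma d(\omega+\alpha\wedge\xi)=\int_\Sigma D\omega=0,
\]
and the extension from $\calK^{sm}(V)$ to arbitrary convex bodies follows by approximation and the continuity of $\phi$.

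For the forward direction, I would assume $\phi\equiv 0$ (so trivially $\phi(\{v\})=0$) and aim to show that $D\omega$ vanishes pointwise on $SV$. The heuristic is that tangent planes to the normal cycles $N(K)$ of smooth convex bodies locally fill out enough directions in the Grassmannian of Legendrian $(n-1)$-planes that knowing $\int_{N(K)}\omega=0$ for every such $K$ forces $D\omega$ to vanish. Concretely, I would fix $(x_0,u_0)\in SV$, produce bodies $K_s\in\calK^{sm}(V)$ whose normal cycles pass through $(x_0,u_0)$ with prescribed second fundamental form, and extract pointwise linear equations on $\omega$ from the vanishing of $\phi(K_s)$ together with its first variations as $s$ varies. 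These equations are exactly the ones encoded by the requirement that $D\omega=0$, because the Rumin operator is designed to factor out precisely the indeterminacy $\alpha\wedge\Omega^{n-2}+\ker d|_{\alpha\text{-horizontal}}$ that is invisible to Legendrian integration.

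The main obstacle is turning this local flexibility heuristic into a clean and rigorous statement. One route is to argue purely locally in Darboux coordinates on $SV$ and exhibit, for every $(x_0,u_0)$, enough smooth convex bodies to separate the fibers of $D$. A more conceptual alternative is to identify the kernel of the integration map $\Omega^{n-1}(SV)^{tr}\to\Val^{sm}$ as a closed $GL(V)$-invariant subspace, verify by direct computation that it contains $\ker D$ plus the line of primitives whose integral over $N(\{v\})$ is nonzero, and then invoke the irreducibility results of Alesker to conclude that these inclusions are equalities, which is precisely the content of the theorem.
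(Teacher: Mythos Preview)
The paper does not contain a proof of this statement: Theorem~\ref{thm_kernel} is quoted from Bernig and Br\"ocker \cite{bernig_broecker07} and used as a black box. There is therefore nothing in the present paper to compare your proposal against.

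That said, a few remarks on your sketch. Your reverse implication is essentially correct: since $D\omega=d(\omega+\alpha\wedge\xi)=0$ the form $\omega+\alpha\wedge\xi$ is closed, normal cycles are Legendrian so the $\alpha\wedge\xi$ term integrates to zero, and Stokes over the homotopy chain gives $\phi(K)=\phi(\{0\})=0$ on smooth bodies, hence everywhere by continuity. For the forward implication you correctly identify the difficulty, but neither of your two suggested routes is yet a proof. The ``local flexibility'' argument requires showing that the tangent $(n-1)$-planes to normal cycles of smooth bodies, together with their first-order variations, span enough of the relevant bundle to force $D\omega=0$ pointwise; this is the actual work in \cite{bernig_broecker07} and is not a triviality. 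Your alternative via Alesker irreducibility is circular as stated: irreducibility acts on $\Val^{sm}$, not on $\Omega^{n-1}(SV)^{tr}$, and to transfer information back to forms you already need to know the kernel of the integration map, which is exactly what you are trying to determine.
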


In particular, we see that 
$$\int_{N(\cdot)}\omega=0,$$
whenever $\omega$ is a multiple of $\alpha$ or $d\alpha$. Finally, we denote by $T$ the Reeb vector field on $SV$; it is uniquely determined by
$$i_T\alpha=1\qquad \text{and} \qquad \calL_T\alpha=0.$$

\subsection{First variation and area measures}

Let $\phi\in\Val$ be a valuation. We say that a signed Borel measure $m$ on the unit sphere is the first variation of $\phi$ at $K$ if 
$$\dt \phi(K+tL)=\int_{S(V)} h_L\;dm$$
for every $L\in\calK(\RR^n)$. Here $h_L(u)=\sup_{x\in L} \langle u,x \rangle$ denotes the support function of $L$. The case $\phi=\vol_n$ is  classical and, in fact, the first variation of the volume at $K$ coincides precisely with \emph{the} area measure of $K$,
\begin{equation}\label{eq_variationvol}\dt \vol_n(K+tL)=\int_{S(V)} h_L\; dS_{n-1}(K),\end{equation}
see e.g.\ \cite{schneider_book}*{p.~203}.
To set the stage for our definition of general smooth area measures, we first consider measures on the unit sphere which arise as the first variation of translation-invariant, smooth valuations.

\begin{proposition}\label{thm_firstvar}
Suppose $\phi\in\Val^{sm}$ and $K\in\calK(V)$. Then there exists a unique, signed Borel measure $\delta\phi(K)$ on $S(V)$, called the first variation of $\phi$ at $K$, such that
\begin{equation}
\label{eq_firstvar}\dt \phi(K+tL)=\int_{S(V)} h_L\; d(\delta\phi(K))\end{equation}
for every $L\in \calK(V)$.
 
\end{proposition}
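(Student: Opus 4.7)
The plan is to prove uniqueness first by exploiting density of differences of support functions in $C(S(V))$, and then to construct $\delta\phi(K)$ via the normal cycle representation of $\phi$. For uniqueness, I would argue that the $\RR$-linear span of $\{h_L : L \in \calK(V)\}$ is dense in $C(S(V))$: every $C^2$ function on $S(V)$ extends to a $1$-homogeneous $C^2$ function on $V\setminus\{0\}$, and the latter can be written as the difference of two convex $1$-homogeneous functions by adding $c|\,\cdot\,|$ for $c$ large, hence as $h_{L_1}-h_{L_2}$ for suitable $L_i\in\calK(V)$. Since $C^2(S(V))$ is dense in $C(S(V))$ and a signed Borel measure is determined by its integrals against continuous functions, at most one $\delta\phi(K)$ can satisfy \eqref{eq_firstvar}.

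For existence, I would use that every smooth translation-invariant valuation has the form $\phi=c\vol_n+\phi_\omega$ with $\phi_\omega(K)=\int_{N(K)}\omega$ for some $\omega\in\Omega^{n-1}(SV)^{tr}$. The volume part contributes $c\cdot S_{n-1}(K,\,\cdot\,)$ by \eqref{eq_variationvol}. For $\phi_\omega$, I would first take $L\in\calK^{sm}(V)$, so that the inverse Gauss map $u\mapsto \nabla h_L(u)$ is smooth on $S(V)$. Because translating a boundary point of $K$ with outer normal $u$ by $\nabla h_L(u)$ preserves the outer normal, the map $\Phi_{L,t}(x,u)=(x+t\nabla h_L(u),u)$ pushes $N(K)$ to $N(K+tL)$. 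Hence
$$\dt\phi_\omega(K+tL)=\int_{N(K)}\calL_{X_L}\omega=\int_{N(K)}i_{X_L}d\omega,$$
where $X_L(x,u)=(\nabla h_L(u),0)$ and the second equality follows from Cartan's formula together with Stokes' theorem on the cycle $N(K)$.

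The main technical step is then to rewrite the last integral as $\int_{S(V)}h_L\,dm$ for a signed Borel measure $m$ depending only on $\phi$ and $K$. Since $i_{X_L}d\omega$ depends pointwise and linearly on $\nabla h_L(u)$, the functional $h_L\mapsto\int_{N(K)}i_{X_L}d\omega$ is linear; the step I expect to be the main obstacle is converting this dependence on $\nabla h_L$ into a pairing against $h_L$ itself. I would attempt this by modifying $\omega$ by multiples of $\alpha$ and $d\alpha$, which is permissible by Theorem~\ref{thm_kernel}, so that $i_{X_L}d\omega$ restricts on $N(K)$ to $h_L\cdot\eta$ for a single form $\eta$ whose pushforward under $\pi_2$ defines the measure $m$; alternatively one may integrate by parts in the fiber direction using the Euler identity $h_L(u)=\langle u,\nabla h_L(u)\rangle$. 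Once \eqref{eq_firstvar} is established on $\calK^{sm}(V)$, both sides are continuous in $L$ in the Hausdorff metric (the left-hand side by continuity of $\phi$, the right-hand side by uniform convergence $h_{L_j}\to h_L$ on $S(V)$), so the identity extends to all $L\in\calK(V)$ by approximation.
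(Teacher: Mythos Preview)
Your proposal is correct and follows the same route as the paper. The obstacle you flag is resolved by combining your two suggestions rather than choosing between them: normalize $\omega$ modulo $(\alpha,d\alpha)$ so that $D\omega=d\omega$ (hence $d\omega=\alpha\wedge i_Td\omega$), whereupon $i_{X_L}d\omega\equiv\alpha(X_L)\,i_Td\omega$ modulo multiples of $\alpha$, and Euler's identity gives $\alpha(X_L)=\langle\nabla h_L(u),u\rangle=h_L(u)$, yielding $\delta\phi(K)=\pi_{2*}\big(N(K)\llcorner i_Td\omega\big)$.
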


\begin{remark}
\begin{enumerate}
\item The case where $\phi=\mu_k$ is an intrinsic volume is classical; in fact,

$$\delta\mu_k(K)=\frac{1}{\omega_{n-k}} \binom{n-1}{k-1} S_{k-1}(K),$$ 
where $S_k(K)$ denotes the \emph{$k$-th area measure} of $K$, see e.g.\ \cite{schneider_book}*{p. 203}.

\item In \cite{bernig_fu11} the first variation of a valuation was introduced as a curvature measure, not as an area measure (see below for the definitions). This is more suitable if the first variation is considered with respect to the deformation of $K$ under the flow of a vector field on $V$.
\end{enumerate}
\end{remark} 

\begin{proof}[Proof of Proposition~\ref{thm_firstvar}]
It follows from a well-known result of McMullen \cite{mcmullen77} that $\phi(K+tL)$, $t\geq 0$, is a polynomial in $t$; thus the left hand side of
 \eqref{eq_firstvar} is well-defined and continuous in $K$ and $L$. Uniqueness follows from the fact that the span of differences of support
 functions is a dense subspace of all continuous functions on the unit sphere, see e.g.~\cite{schneider_book}*{Lemma 1.7.9}.  It remains to prove existence. Since $\phi$ is a translation-invariant, smooth valuation, there exists a constant $c\in\RR$ and a translation-invariant, smooth differential form $\omega\in\Omega^{n-1}(SV)$ such that 
$$\phi(K)=c\vol_n(K)+\int_{N(K)} \omega.$$
By \eqref{eq_variationvol} and the fact that the normal cycle vanishes on multiples of $\alpha$, we may assume without loss of generality that $c=0$ and $D\omega= d\omega$.
 This assumption implies in particular that $d\omega$ is a multiple of $\alpha$.
Fix now two convex bodies $K\in\calK$ and $L\in \calK^{sm}$ and for each $t\in\RR$ define a diffeomorphism $F_t: SV\rightarrow SV$ by
$$(x,v)\mapsto (x+t \nabla h_L(v), v).$$
Since the boundary of $L$ can be expressed as $\{\nabla h_L(v): v\in S(V)\}$, it is easy to check that $F_t(N(K))=N(K+tL)$. Furthermore, note that $F_{s+t}=F_s\circ F_t$. Let $X$ denote the vector field on $SV$ generated by the one-parameter subgroup of diffeomorphisms $t\mapsto F_t$. 
We compute
\begin{align*}
\dt \phi(K+tL)&= \dt \int_{F_t(N(K))}\omega = \int_{N(K)}\dt F_t^*\omega  \\
	&=  \int_{N(K)}\calL_X\omega = \int_{N(K)}\alpha(X)\wedge i_T d\omega  \\
	&=  \int_{N(K)}\pi_2^*h_L\wedge	i_T d\omega,		
\end{align*}
where we have used $\calL_X\omega=d(i_X\omega)+i_Xd\omega$, $\partial N(K)=0$, $d\omega=\alpha\wedge i_T d\omega$, the fact that the normal cycle vanishes on multiples of $\alpha$, and  
$$\alpha(X)_{(x,v)}=\langle\nabla h_L(v),v \rangle=h_L(v).$$
By continuity, we obtain 
\begin{equation}\label{eq_dtmu}
\dt \phi(K+tL)=  \int_{N(K)}\pi_2^*h_L\wedge	i_T d\omega= \int_{S(V)} h_L \; d(\delta\phi(K))
\end{equation}
for general convex bodies $K$ and $L$, where the Borel measure $\delta\phi(K)$ is given explicitly by
\begin{equation}
 \label{eq_varmeas} \delta\phi(K)= \pi_{2*}( N(K)\: \llcorner\: i_T d\omega).
\end{equation}
This completes the proof of the proposition.
\end{proof}

\begin{corollary}\label{cor_convLie} If $\phi=\int_{N(\cdot)} \omega$, then
$$2\mu_{n-1}*\phi=\int_{N(\cdot)}\calL_T\omega.$$

\end{corollary}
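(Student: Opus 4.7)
The plan is to combine equation \eqref{eq_convintrinsic} with the intermediate step in the proof of Proposition~\ref{thm_firstvar}, specialized to $L=B(V)$. By \eqref{eq_convintrinsic} we have
$$2\mu_{n-1}*\phi(K)=\dt \phi(K+tB(V)),$$
so it suffices to identify the right-hand side with $\int_{N(K)}\calL_T\omega$.

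First I would examine the diffeomorphism $F_t\colon SV\to SV$ constructed in the proof of Proposition~\ref{thm_firstvar} in the case $L=B(V)$. Since $h_{B(V)}(v)=|v|$ and hence $\nabla h_{B(V)}(v)=v$ on $S(V)$, the flow reduces to $F_t(x,v)=(x+tv,v)$, whose infinitesimal generator is the vector field $X$ on $SV$ given by $X_{(x,v)}=(v,0)$.

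Next I would verify that $X$ coincides with the Reeb vector field $T$. Writing $\alpha_{(x,v)}=\langle v,\,\cdot\,\rangle$ for the canonical contact form on $SV$, the identity $i_X\alpha=\langle v,v\rangle=1$ is immediate, and a brief computation using $d\alpha=\sum dv^i\wedge dx^i$ together with the constraint $|v|^2=1$ gives $i_X d\alpha=-\tfrac{1}{2}d(|v|^2)=0$. Cartan's formula then yields $\calL_X\alpha=0$, so by the uniqueness of the Reeb vector field $X=T$.

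Finally, substituting $L=B(V)$ into the intermediate identity
$$\dt \phi(K+tL)=\int_{N(K)}\calL_X\omega$$
established en route to \eqref{eq_dtmu} in the proof of Proposition~\ref{thm_firstvar}, and using $X=T$, we obtain $\dt \phi(K+tB(V))=\int_{N(K)}\calL_T\omega$, which together with the first identity completes the proof. There is really no obstacle here; the only thing that must be checked carefully is the identification of the infinitesimal generator of $F_t$ with the Reeb field, and once this is done everything else is an immediate specialization of the calculations already carried out for Proposition~\ref{thm_firstvar}.
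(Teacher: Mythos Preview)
Your proof is correct and uses essentially the same ingredients as the paper---equation \eqref{eq_convintrinsic} together with the computation in the proof of Proposition~\ref{thm_firstvar} specialized to $L=B(V)$. The only difference is where you intercept that computation: you stop at the intermediate identity $\int_{N(K)}\calL_X\omega$ and then verify directly that the generating field $X$ of the flow $F_t(x,v)=(x+tv,v)$ is the Reeb field $T$, whereas the paper instead plugs $h_{B(V)}=1$ into the final form \eqref{eq_dtmu} to get $\int_{N(K)} i_T d\omega$ and then cites $\partial N(K)=0$ (Stokes) to pass back to $\int_{N(K)}\calL_T\omega$. Your route is marginally more direct since it avoids going forward to \eqref{eq_dtmu} and then backward again; the paper's route avoids the small check that $i_X d\alpha=0$.
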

\begin{proof}
Since $h_{B(V)}=1$ and $\partial N(K)=0$, this is an immediate consequence of \eqref{eq_convintrinsic} and  \eqref{eq_dtmu}.
\end{proof}

We see from \eqref{eq_varmeas}, that the first variation measure of a smooth valuation is given by integration of a translation-invariant, smooth $(n-1)$-form over a part of the normal cycle of a convex body. This motivates the following definition of general smooth area measures.

\begin{definition}[Smooth area measures]

The vector space $\Area=\Area(V)$ of (smooth) area  measures on $V$ is given by all expressions of the form
$$\Psi(K,A)=\int_{N(K)\cap\pi^{-1}_2(A)}\omega.$$
Here $K\in\calK(V)$ is a convex body,   $\omega\in\Omega^{n-1}(SV)$ a translation-invariant, smooth $(n-1)$-form, $A\subset S(V)$ a Borel set, and $\pi_2:SV\rightarrow S(V)$ the 
canonical projection. Furthermore, we denote by $\Area_k\subset \Area$ the subspace of area measures given by differential forms which are homogeneous of degree $k$. Here we call
 $\omega\in \Omega^{n-1}(SV)^{tr}$ homogeneous of degree $k$ if $m_t^* \omega =t^k\omega$, $t>0$, where $m_t\colon SV\rightarrow SV$ denotes multiplication by $t$ in the
 first component $m_t(x,v)=(tx,v)$.

\end{definition}

\noindent\textbf{Notation.}
Given an area measure $\Psi\in \Area$, $K\in\calK(V)$, and a bounded Borel function $f\colon S(V)\rightarrow \RR$,  we will denote integration with respect to the measure $\Psi(K)=\Psi(K,\;\cdot\;)$ by
$$\int_{S(V)} f(u) \; d\Psi(K,u).$$

\begin{remark}
\begin{enumerate}
\item Note that a smooth area measure is by definition not a measure, but a map which assigns to every convex body a measure on the unit sphere. It follows from \cite{alesker_fu08}*{Corollary 2.1.10} that $K\mapsto \Psi(K)$ is a valuation with values in the vector space of signed Borel measures on the unit sphere.
\item Since the exterior powers satisfy $\Lambda^m(V\times W)\cong\bigoplus_{k=0}^m \Lambda^k V\otimes \Lambda^{m-k}W$ whenever $V$ and $W$ are vector spaces, we clearly have 
$$\Area=\bigoplus_{k=0}^{n-1} \Area_k.$$
Moreover, $\Psi\in\Area_k$ if and only if $\Psi(t K)=t^k\Psi(K)$ whenever $t> 0$ and $K\in\calK(V)$. 
\item Observe that $\delta\phi\in \Area$ whenever $\phi\in\Val^{sm}$. In particular, the classical area measures $S_0,\ldots, S_{n-1}$ are smooth area measures. 
\end{enumerate}
\end{remark}

An in a certain sense dual notion to smooth area measures are \emph{smooth curvature measures}. These are maps which send convex bodies to signed Borel measures on $V$,
$$\Phi(K,A)=\int_{N(K)\cap\pi^{-1}_1(A)}\omega, \qquad A\subset V,$$ 
see e.g.\ \cites{bernig_fu11,bernig_etal12, federer59, fu90, fu94}. The map $(K,A)\mapsto \vol_n(K\cap A)$ is also considered to be a curvature measure. We denote by $\Curv=\Curv(V)$ the vector space of all curvature measures on $V$. We explore the relations between area and curvature measures in Subsection~\ref{sec_areacurv}.

From the definition of area measures we see that they can be considered as a special way of mapping convex bodies to measures on the unit sphere. This suggests to consider the following two basic operations: (1) evaluating each measure on the whole unit sphere and (2) computing the centroid of each measure. This is the content of the next definition.

We denote by $\Vector=\Vector(V)$ the vector space of continuous, translation-invariant valuations on $V$ with values in $V$, i.e. $\Vector(V)\cong\Val\otimes V$. The subspace of smooth valuations $\Vector^{sm}\subset\Vector$ is given by those elements which can represented by integration of a smooth, translation-invariant $(n-1)$-form on $SV$ with values in $V$ over the normal cycle.

\begin{remark} Equivalently, we could have defined $\Vector^{sm}$ as the subspace of smooth vectors of the natural $GL(V)$-representation on $\Vector(V)\cong\Val\otimes V$. Indeed, since $(\Val\otimes V)^{sm}=\Val^{sm}\otimes V$ in terms of smooth vectors (see e.g.\ \cite{alesker04}*{Lemma 1.5} for a proof), the subspace of smooth vectors coincides with the subspace of valuations which can be represented by integration of a smooth differential form with values in $V$.
\end{remark}

\begin{definition}
We denote by $\glob: \Area\rightarrow \Val^{sm}$ the \emph{globalization map} 
$$\glob(\Psi)=\Psi(\;\cdot\;,S(V)).$$
The map
$C:\Area\rightarrow \Vector^{sm}$ defined by
$$C(\Psi)=\int_{S(V)}u\; d\Psi(\;\cdot \;,u) $$
is called the \emph{centroid map}.
\end{definition}

The following lemma establishes a first connection between the first variation of a valuation and the centroid map.

\begin{lemma}\label{lem_centroids}
Let $\Psi\in\Area$. If there exists $\phi\in\Val^{sm}$ such that $\Psi=\delta \phi $, then $C(\Psi)=0$.  
\end{lemma}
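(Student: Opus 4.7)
The proof should be a short application of translation invariance of $\phi$ combined with the defining property of $\delta\phi$, tested against support functions of single points. The plan is as follows.

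Since $C(\delta\phi)(K)\in V$, it suffices to show that $\langle v,C(\delta\phi)(K)\rangle=0$ for every $v\in V$ and every $K\in\calK(V)$. Fix $v\in V$ and take $L=\{v\}$, which is a convex body. The support function of this singleton is $h_{\{v\}}(u)=\langle v,u\rangle$. By the definition of the centroid map,
$$
\langle v,C(\delta\phi)(K)\rangle=\int_{S(V)}\langle v,u\rangle\, d(\delta\phi(K))(u)=\int_{S(V)}h_{\{v\}}\, d(\delta\phi(K)).
$$

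Now the right-hand side is, by the defining property \eqref{eq_firstvar} of the first variation, equal to $\frac{d}{dt}\big|_{t=0}\phi(K+t\{v\})$. But $K+t\{v\}=K+tv$ is merely a translate of $K$, and $\phi\in\Val^{sm}$ is translation-invariant, so $\phi(K+tv)=\phi(K)$ for all $t$, and the derivative vanishes. Hence $\langle v,C(\delta\phi)(K)\rangle=0$ for every $v\in V$, which gives $C(\delta\phi)(K)=0$ for every $K\in\calK(V)$, i.e.\ $C(\delta\phi)=0$.

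There is essentially no obstacle here: once one recognizes that testing the vector-valued integral against an arbitrary $v$ is the same as integrating the support function of $\{v\}$, translation invariance immediately closes the argument. The only small point worth noting is that singletons are admissible as the test body $L$, but this is clear from the definition of $\calK(V)$ (nonempty, compact, convex).
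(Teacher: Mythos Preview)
Your proof is correct and follows essentially the same approach as the paper: test the centroid against an arbitrary $v\in V$ by taking $L=\{v\}$, use that $h_{\{v\}}(u)=\langle v,u\rangle$, and invoke translation invariance of $\phi$ to conclude the first variation vanishes. The paper's proof is the same argument, just stated more tersely.
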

\begin{proof}
Since $\phi$ is translation-invariant and $h_{\{v\}}(u)=\langle u,v\rangle$, it follows from \eqref{eq_firstvar} that
$$0=\int_{S(V)} \langle u,v \rangle \; d\Psi(K,u)=\langle C(\Psi)(K),v \rangle$$
for each $v\in V$. Thus, $C(\Psi)(K)=0$ for every $K\in\calK(V)$.
\end{proof}

\subsection{Modules over \texorpdfstring{$\Val^{sm}$}{Valsm}}

Both the Alesker product and the Bernig-Fu convolution product turn the vector space of smooth valuations into an algebra with unit satisfying Poincar\'{e} duality. It was shown in \cite{bernig_etal12} that one of these operations, namely the Alesker product, can be used to turn the vector space of smooth curvature measures into a module over smooth valuations. Building on this idea, we show that in the case of area measures one can use the convolution product to turn the vector space of smooth area measures into a module over smooth valuations. This module structure is compatible with the first variation map $\delta$, the globalization map $\glob$, and the centroid map $C$.

Let us start by recalling the description of the convolution of valuations in terms of differential forms, see \cite{bernig_fu06}.
 Suppose we are given two smooth, translation-invariant valuations $\phi, \psi\in \Val^{sm}$,
$$\phi(K)=\int_{N(K)} \beta \quad\text{and}\quad \psi(K)=\int_{N(K)} \gamma.$$ 
Since the normal cycle vanishes on multiples of $\alpha$, we may assume that $D\beta=d\beta$ and $D\gamma=d\gamma$. In terms of $\beta $ and $\gamma$, the convolution $\phi*\psi$ is given by 
\begin{equation}\label{eq_defconvprod}
\phi *\psi=\int_{N(\cdot)} *_1^{-1}(*_1\beta\wedge *_1d\gamma)
\end{equation}
Here $*_1 $ is a linear operator on $\Omega^*(SV)^{tr}$ which is uniquely determined by the relation
$$*_1(\pi_1^*\gamma_1\wedge\pi_2^*\gamma_2)=(-1)^{\binom{n-\deg \gamma_1}{2}} \pi_1^*(*_V\gamma_1)\wedge \pi_2^*\gamma_2,$$
where $\pi_1:SV\rightarrow V$ and $\pi_2:SV\rightarrow S(V)$ denote the natural projections, $\gamma_1\in\Omega^*(V)$, $\gamma_2\in\Omega^*(S(V))$, and $*_V$ is the Hodge 
star operator on $\Omega^*(V)$.

\begin{definition} Whenever $f$ is a smooth function on the unit sphere and $\Psi\in\Area$ we define a smooth, translation-invariant valuation $\Psi_f\in\Val^{sm}$ by 
$$\Psi_f(K)=\int_{S(V)}f(u)\; d\Psi(K,u).$$
\end{definition}
Observe that $\Psi_f$ is indeed a smooth, translation-invariant valuation, since it can obviously be represented by a smooth, translation-invariant differential form.

\begin{proposition}
For each $\phi\in \Val^{sm}$ and $\Psi\in \Area $ there exists a unique area measure $\phi*\Psi\in \Area$ such that
\begin{equation}\label{eq_conv}
(\phi*\Psi)_f=\phi*\Psi_f
\end{equation}
for every  $f\in C^\infty(S(V))$.
\end{proposition}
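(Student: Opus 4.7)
The argument splits into a short uniqueness part and a more substantial existence part.

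\textbf{Uniqueness.} Suppose $\Xi_1,\Xi_2\in\Area$ both satisfy $(\Xi_j)_f = \phi*\Psi_f$ for every $f\in C^\infty(S(V))$. For each fixed $K\in\calK(V)$, the signed Borel measures $\Xi_1(K,\cdot)$ and $\Xi_2(K,\cdot)$ then integrate every smooth test function identically, so by density of $C^\infty(S(V))$ in $C(S(V))$ and the Riesz representation theorem they coincide.

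\textbf{Existence.} I would represent the data by differential forms, writing $\phi(K)=c\vol_n(K)+\int_{N(K)}\beta$ with $\beta\in\Omega^{n-1}(SV)^{tr}$ and $\Psi(K,A)=\int_{N(K)\cap\pi_2^{-1}(A)}\omega$. Taking $A=\{0\}$ in \eqref{eq_convprod} shows that $\vol_n$ is the unit of the convolution, so the required relation forces $(c\vol_n)*\Psi := c\Psi$. It therefore suffices to treat $\phi=\int_{N(\cdot)}\beta$; after modifying $\beta$ by a multiple of $\alpha$ I may assume $D\beta=d\beta$. Guided by \eqref{eq_defconvprod}, I would define $\phi*\Psi$ as the area measure represented by
$$\eta:=*_1^{-1}(*_1\beta \wedge *_1 D\omega)\in\Omega^{n-1}(SV)^{tr}.$$

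To verify $(\phi*\Psi)_f=\phi*\Psi_f$, I would observe that $\Psi_f$ is represented by the form $(\pi_2^*f)\,\omega$ and compute the Bernig--Fu convolution. A direct Leibniz-style calculation using the defining property of Rumin's operator yields the key identity
$$D\bigl((\pi_2^*f)\,\omega\bigr) = (\pi_2^*f)\,D\omega + \alpha\wedge R_f,$$
where $R_f$ is a form depending linearly on $df$. Since $f$ is a $0$-form pulled back from $S(V)$, the operator $*_1$ is $\pi_2^*C^\infty(S(V))$-linear, so the $(\pi_2^*f)\,D\omega$-piece produces $(\pi_2^*f)\,\eta$ and its integral over $N(K)$ is exactly $(\phi*\Psi)_f(K)$.

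\textbf{Main obstacle.} The real content of the proof is to show that the remainder
$$\int_{N(K)}*_1^{-1}\bigl(*_1\beta\wedge *_1(\alpha\wedge R_f)\bigr)=0$$
for every $K$ and $f$. This is not immediate from $\alpha|_{N(K)}=0$, because $*_1$ does not commute with wedging by $\alpha$. The natural way around this is to argue indirectly: the two forms $(\pi_2^*f)\,\omega$ and $(\pi_2^*f)(\omega+\alpha\wedge\xi)+\alpha\wedge\nu_f$ (for the appropriate Rumin data $\xi,\nu_f$) both represent the valuation $\Psi_f$ and differ by a multiple of $\alpha$, so well-definedness of the Bernig--Fu convolution on valuations (Theorem~\ref{thm_kernel}) forces the $\alpha\wedge R_f$ contribution to vanish as a valuation. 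A cleaner alternative — which I would use if the above gets technical — bypasses the Rumin bookkeeping entirely: first define $\phi_A*\Psi:=\Psi(\,\cdot\,+A,\cdot)$ for $A\in\calK^{sm}$, for which $(\phi_A*\Psi)_f = \phi_A*\Psi_f$ follows immediately from \eqref{eq_convprod}, and then extend by linearity and continuity using the density of the linear span of $\{\phi_A:A\in\calK^{sm}\}$ in $\Val^{sm}$.
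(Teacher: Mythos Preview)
Your uniqueness argument is fine, and your general strategy for existence---define $\phi*\Psi$ at the level of forms via the Bernig--Fu $*_1$-calculus and then verify the characterizing identity---is exactly the paper's approach. The difference lies in which factor you put in the ``differentiated'' slot of \eqref{eq_defconvprod}. You place $\omega$ there, writing $\eta=*_1^{-1}(*_1\beta\wedge *_1D\omega)$; the paper instead exploits the commutativity of the convolution and puts $\beta$ there, defining
\[
\phi*\Psi(K,A)=\int_{N(K)\cap\pi_2^{-1}(A)} *_1^{-1}\bigl(*_1\omega\wedge *_1 d\beta\bigr).
\]
This small swap dissolves precisely the obstacle you identify. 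In the paper's computation the correction to $\pi_2^*f\wedge\omega$ needed to make $D=d$ is a multiple $\alpha\wedge\xi$, and the remainder becomes $*_1^{-1}\bigl(*_1(\alpha\wedge\xi)\wedge *_1 d\beta\bigr)$. Here \emph{both} factors are multiples of $\alpha$: $\alpha\wedge\xi$ trivially, and $d\beta=D\beta$ because the Rumin differential is always a multiple of $\alpha$. At a point $(x,v)$ the form $\alpha$ has bidegree $(1,0)$, so $*_1$ of anything divisible by $\alpha$ is annihilated by interior multiplication with $v$; the wedge of two such forms is again annihilated, and applying $*_1^{-1}$ forces the result to contain $\alpha$. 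Thus the remainder vanishes on normal cycles in one line. In your ordering the remainder is $*_1^{-1}\bigl(*_1\beta\wedge *_1(\alpha\wedge R_f)\bigr)$, and since $\beta$ is arbitrary the same Hodge-star reasoning does not apply---which is why you are forced into your workarounds.

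Your first workaround (invoking well-definedness of the valuation-level convolution) can be made to work but is more circuitous than the paper's one-line disposal. Your second workaround, defining $\phi_A*\Psi:=\Psi(\,\cdot\,+A)$ and extending by density, has a genuine gap: you need to know that finite linear combinations $\sum c_i\Psi(\,\cdot\,+A_i)$ depend only on $\sum c_i\phi_{A_i}$ and that the limit of such expressions lands in $\Area$, neither of which is automatic. The paper in fact proceeds in the opposite order: it first establishes the form-level module action and only afterward derives \eqref{eq_modcharacterization} as a consequence.
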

\begin{proof}
Since uniqueness follows immediately from \eqref{eq_conv}, we only prove existence. To this end fix $\phi\in\Val^{sm}$ and $\Psi\in \Area$, say $\phi$ and $\Psi$ are given by
$$\Psi(K,A)=\int_{N(K)\cap\pi^{-1}_2(A)}\omega\quad\text{and}\quad \phi(K)=\int_{N(K)} \beta.$$ 
By Theorem~\ref{thm_kernel}, we may assume that $D\beta=d\beta$. 

Since $\Psi_f$ is a smooth, translation-invariant valuation, the convolution product $\phi*\Psi_f$ is well-defined. 
Let $\xi\in\Omega^{n-2}(SV)$ be such that $D(\pi_2^* f\wedge\omega)=d(\pi_2^*f\wedge\omega+\alpha\wedge \xi)$.
By the definition of the convolution product \eqref{eq_defconvprod}, we have 
\begin{align*}
 \phi*\Psi_f&=\int_{N(\cdot)} *_1^{-1}(*_1(\pi_2^*f\wedge\omega+\alpha\wedge\xi)\wedge *_1d\beta)\\
&= \int_{N(\cdot)} \pi_2^*f \wedge *_1^{-1}(*_1\omega\wedge *_1d\beta)+ \int_{N(\cdot)} *_1^{-1}(*_1(\alpha\wedge\xi)\wedge *_1d\beta)\\
&=\int_{N(\cdot)} \pi_2^*f\wedge *_1^{-1}(*_1\omega\wedge *_1d\beta), 
\end{align*}
where the last equality holds because $d\beta=D\beta$ is a multiple of $\alpha$ and hence by the properties of the Hodge star operator also $*_1^{-1}(*_1(\alpha\wedge\xi)\wedge *_1d\beta)$ is a multiple of $\alpha$ and the normal cycle vanishes on multiples of $\alpha$. If we define now $\phi*\Psi\in\Area$ by
\begin{equation}\label{eq_moduledef}
\phi*\Psi(K,A)= \int_{N(K)\cap\pi^{-1}_2(A)} *_1^{-1}(*_1\omega\wedge *_1d\beta),
\end{equation}
we obtain \eqref{eq_conv}.
\end{proof}

We equip the vector space of all smooth area measures with the quotient topology which is induced by the integration map $\Omega^{n-1}(SV)^{tr}\rightarrow \Area(V)$.
\begin{remark}
The topology on $\Area$ is Fr\'{e}chet. Indeed, the kernel of the integration map consists precisely of forms which are multiples of $\alpha$ and $d\alpha$, see \cite{fu11}*{Proposition 3.6}, and is therefore closed. This implies that the quotient topology is Fr\'{e}chet. We will, however, not use this fact.
\end{remark}

Recall that for $A\in\calK^{sm}$ the valuation $\phi_A\in\Val^{sm}$ is defined by 
$$\phi_A=\vol_n(\;\cdot\;+A).$$

\begin{theorem}
The space $\Area$ of all smooth area measures carries the structure of a module over $\Val^{sm}$ such that the action of $\Val^{sm}$ on $\Area$ is continuous and is uniquely determined by the property that
\begin{equation}\label{eq_modcharacterization}
 \phi_A *\Psi= \Psi(\;\cdot\;+ A)
\end{equation}
whenever $A\in \calK^{sm}$ and $\Psi\in\Area$. 
\end{theorem}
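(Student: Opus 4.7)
The plan is to take the bilinear pairing $(\phi,\Psi)\mapsto\phi*\Psi$ constructed in the preceding proposition as the candidate module action, verify the module axioms and the characterization \eqref{eq_modcharacterization} by appealing to the defining identity \eqref{eq_conv}, and finally deduce uniqueness from continuity together with density of the valuations $\phi_A$ in $\Val^{sm}$.

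For the module axioms, the crucial observation is that \eqref{eq_conv} reduces everything to the algebra structure on $(\Val^{sm},*)$. Given $\phi_1,\phi_2\in\Val^{sm}$, $\Psi\in\Area$, and any $f\in C^\infty(S(V))$, associativity of convolution in $\Val^{sm}$ gives
$$((\phi_1*\phi_2)*\Psi)_f=(\phi_1*\phi_2)*\Psi_f=\phi_1*(\phi_2*\Psi_f)=(\phi_1*(\phi_2*\Psi))_f,$$
and a signed Borel measure on $S(V)$ is determined by its pairings against smooth functions, so the two area measures agree on every convex body. The unit axiom $\chi*\Psi=\Psi$ is obtained from $\chi*\Psi_f=\Psi_f$ in precisely the same way.

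To establish \eqref{eq_modcharacterization}, I first check that $\Psi(\,\cdot\,+A)\in\Area$ for $A\in\calK^{sm}$. Representing $\Psi$ by a form $\omega$, the map $F\colon SV\to SV$, $(x,v)\mapsto(x+\nabla h_A(v),v)$, is a diffeomorphism satisfying $F(N(K))=N(K+A)$ and $\pi_2\circ F=\pi_2$, exactly as in the proof of Proposition~\ref{thm_firstvar}. Hence
$$\Psi(K+A,B)=\int_{N(K)\cap\pi_2^{-1}(B)}F^*\omega,$$
so $\Psi(\,\cdot\,+A)$ is a smooth area measure represented by $F^*\omega$, and moreover $\Psi(\,\cdot\,+A)_f=\Psi_f(\,\cdot\,+A)$ for every $f$. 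Combining \eqref{eq_conv} with the characterization \eqref{eq_convprod} of convolution by $\phi_A$ yields
$$(\phi_A*\Psi)_f=\phi_A*\Psi_f=\Psi_f(\,\cdot\,+A)=\Psi(\,\cdot\,+A)_f,$$
and varying $f$ gives $\phi_A*\Psi=\Psi(\,\cdot\,+A)$.

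For continuity and uniqueness, the explicit formula \eqref{eq_moduledef} makes $\phi\mapsto\phi*\Psi$ continuous: convergence of representing forms $\beta_j\to\beta$ in $\Omega^{n-1}(SV)^{tr}$ produces convergence of $*_1^{-1}(*_1\omega\wedge *_1 d\beta_j)$ in the same space, hence of the resulting area measures in the quotient topology on $\Area$; continuity in $\Psi$ is analogous. Uniqueness then follows because the linear span of $\{\phi_A:A\in\calK^{sm}\}$ is dense in $\Val^{sm}$, a standard fact from Alesker's theory: any other continuous action agreeing on this dense set must coincide with ours. The main obstacle I anticipate is this density assertion, which has to be invoked at the level of smooth valuations rather than just continuous ones; independence of \eqref{eq_moduledef} from the chosen representative $\beta$ with $D\beta=d\beta$, on the other hand, is automatic from \eqref{eq_conv}, since any two admissible choices produce area measures with identical pairings against every smooth test function.
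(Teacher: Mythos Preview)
Your argument follows the paper's proof essentially verbatim: module axioms via \eqref{eq_conv} and associativity of convolution, the characterization \eqref{eq_modcharacterization} via \eqref{eq_conv} and \eqref{eq_convprod}, continuity from the explicit formula \eqref{eq_moduledef}, and uniqueness from density of the $\phi_A$ (which the paper attributes to Alesker's irreducibility theorem). One correction: the unit for the Bernig--Fu convolution is $\vol_n$, not $\chi$; the Euler characteristic is the unit for the Alesker product, and the Fourier transform exchanges the two (cf.\ \eqref{eq_fintrinsic} and \eqref{eq_fourierhom}). Your unit-axiom argument goes through unchanged once you replace $\chi$ by $\vol_n$.
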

\begin{proof}
Using the fact that the convolution product is associative and bilinear and \eqref{eq_conv}, it is easy to check that $(\phi,\Psi)\mapsto \phi*\Psi$  defines a module structure on $\Area$. 
 Since both $\Val^{sm}$ and $\Area $ are quotients of $\Omega^{n-1}(SV)^{tr}$, we see from \eqref{eq_moduledef} that $(\phi,\Psi) \mapsto \phi*\Psi$ is continuous. 
To prove \eqref{eq_modcharacterization}, observe that \eqref{eq_conv} and \eqref{eq_convprod} imply that
$$(\phi_A*\Psi)_f=\phi_A*\Psi_f=\Psi_f(\;\cdot\;+ A)=\left(\Psi(\;\cdot\;+A)\right)_f,$$
whenever $f\in C^\infty(S(V))$.  Since the linear span of the valuations $\phi_A$ is dense in $\Val^{sm}$ by Alesker's irreducibility theorem \cite{alesker01}, we conclude that equation \eqref{eq_modcharacterization} determines the module structure uniquely.
\end{proof}

\begin{lemma}\label{lem_convintrinsic}
If $\Psi\in \Area$ is given by $\Psi(K,A)=\int_{N(K)\cap \pi_2^{-1}(A)} \omega$, then
$$2\mu_{n-1}*\Psi (K,A)=\int_{N(K)\cap \pi_2^{-1}(A)} \calL_T\omega.$$
\end{lemma}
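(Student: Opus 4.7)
The plan is to deduce the formula from its scalar analogue Corollary~\ref{cor_convLie} by pairing both sides with an arbitrary smooth test function on the unit sphere and invoking the characterizing property \eqref{eq_conv} of the module structure.

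Concretely, I would fix $f\in C^\infty(S(V))$ and compute $(2\mu_{n-1}*\Psi)_f$. By \eqref{eq_conv} this equals $2\mu_{n-1}*\Psi_f$, and since $\Psi_f(K)=\int_{N(K)} \pi_2^*f\wedge\omega$ is represented by the translation-invariant form $\pi_2^*f\wedge\omega$, Corollary~\ref{cor_convLie} gives
$$(2\mu_{n-1}*\Psi)_f(K) = \int_{N(K)} \calL_T\bigl(\pi_2^*f\wedge\omega\bigr).$$

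The next step is to expand the Lie derivative by Leibniz and show that $\calL_T(\pi_2^*f)=0$. In the product $SV = V\times S(V)$ the Reeb vector field $T$ is tangent to the $V$-factor (it is given by $T_{(x,v)}=v$ viewed as a tangent vector at $x\in V$), so $d\pi_2(T)=0$; hence by Cartan's formula $\calL_T(\pi_2^*f) = i_T\pi_2^*(df) = df(d\pi_2 T) = 0$. Consequently $\calL_T(\pi_2^*f\wedge\omega) = \pi_2^*f\wedge \calL_T\omega$, and therefore
$$(2\mu_{n-1}*\Psi)_f(K) = \int_{N(K)} \pi_2^*f\wedge \calL_T\omega.$$

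Denoting by $\tilde\Psi\in\Area$ the area measure defined by the form $\calL_T\omega$, the right-hand side is by construction $\tilde\Psi_f(K)$. Thus the signed Borel measures $2\mu_{n-1}*\Psi(K,\,\cdot\,)$ and $\tilde\Psi(K,\,\cdot\,)$ on $S(V)$ give the same integral against every smooth function $f$; since $C^\infty(S(V))$ is dense in $C(S(V))$, the two measures coincide for every convex body $K$, which is the claimed identity. There is no real obstacle in this argument: the main point is the vanishing of $\calL_T(\pi_2^*f)$, which is immediate from the geometric description of the Reeb field on $SV = V\times S(V)$.
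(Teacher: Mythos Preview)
Your proposal is correct and follows essentially the same route as the paper's proof: fix a smooth test function $f$, use \eqref{eq_conv} to reduce to the scalar valuation $\Psi_f$, apply Corollary~\ref{cor_convLie}, and then use $\calL_T(\pi_2^*f)=0$ together with the Leibniz rule. You spell out the justification for $\calL_T(\pi_2^*f)=0$ and the final density step, whereas the paper states these without comment, but the argument is the same.
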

\begin{proof}
Fix $f\in C^\infty(S(V))$. Using \eqref{eq_conv}, Corollary~\ref{cor_convLie}, and  $\calL_T(\pi_2^* f)=0$, we obtain
$$(2\mu_{n-1}*\Psi)_f=2\mu_{n-1}*\Psi_f=\int_{N(\cdot)} \calL_T(\pi_2^*f\wedge \omega)=\int_{N(\cdot)}\pi_2^*f\wedge \calL_T \omega.$$

\end{proof}

We define an action of $\Val^{sm}$ on $\Vector^{sm}(V)\cong\Val^{sm}\otimes V$ by
$$\phi*(\psi\otimes v)=(\phi*\psi)\otimes v,$$
where $\phi,\psi\in\Val^{sm}$ and $v\in V$. In other words, after a choice of coordinates a scalar valuation acts on a vector valuation componentwise.

\begin{proposition}
The centroid map $C:\Area\rightarrow \Vector^{sm}$, the first variation map $\delta: \Val^{sm}\rightarrow \Area$, and the globalization map $\glob:\Area\rightarrow\Val^{sm}$ are  $\Val^{sm}$-module homomorphisms. Furthermore, 
\begin{equation}\label{eq_deltaconv}\delta(\phi)= \phi* S_{n-1}.\end{equation}
\end{proposition}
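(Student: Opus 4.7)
The plan is to treat the three module-homomorphism claims in increasing order of difficulty, reducing each to the characterizing identity \eqref{eq_conv} or \eqref{eq_modcharacterization}, and then to derive \eqref{eq_deltaconv} as a one-line corollary.

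For the globalization map, observe that $\glob(\Psi)=\Psi_1$ where $1\in C^\infty(S(V))$ is the constant function. Applying \eqref{eq_conv} with $f\equiv 1$ gives $\glob(\phi*\Psi)=(\phi*\Psi)_1=\phi*\Psi_1=\phi*\glob(\Psi)$. For the centroid, pick an orthonormal basis $e_1,\dots,e_n$ of $V$ and set $f_i(u)=\langle u,e_i\rangle$; then
$$C(\Psi)=\sum_{i=1}^n \Psi_{f_i}\otimes e_i.$$
Applying \eqref{eq_conv} coordinatewise and using the componentwise definition of $\phi*$ on $\Vector^{sm}\cong \Val^{sm}\otimes V$ yields $C(\phi*\Psi)=\sum_i(\phi*\Psi_{f_i})\otimes e_i=\phi*C(\Psi)$.

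The first variation requires more work. The key idea is to verify the homomorphism property first on the dense subalgebra of valuations of the form $\phi_A$, $A\in\calK^{sm}$. Given $\psi\in\Val^{sm}$, the defining property of $\delta$ together with \eqref{eq_convprod} gives, for every $L\in\calK(V)$,
$$\int_{S(V)}h_L\,d\bigl(\delta(\phi_A*\psi)(K)\bigr)=\dt \psi(K+A+tL)=\int_{S(V)}h_L\,d\bigl(\delta\psi(K+A)\bigr),$$
so $\delta(\phi_A*\psi)(K)=\delta\psi(K+A)$. By the characterizing property \eqref{eq_modcharacterization} of the module action, the right-hand side is exactly $(\phi_A*\delta\psi)(K)$. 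Hence $\delta(\phi_A*\psi)=\phi_A*\delta\psi$ for all $A\in\calK^{sm}$. By Alesker's irreducibility theorem, the linear span of $\{\phi_A:A\in\calK^{sm}\}$ is dense in $\Val^{sm}$, and both sides depend continuously on $\phi$: the convolution action is continuous by the theorem, and $\delta$ is continuous because on the level of translation-invariant differential forms the construction \eqref{eq_varmeas} is a composition of continuous operations (exterior derivative, interior product with $T$, pushforward by $\pi_2$). The homomorphism property therefore extends to all $\phi\in\Val^{sm}$.

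Finally, \eqref{eq_deltaconv} follows at once: since $\vol_n=\phi_{\{0\}}$ is the unit of the convolution algebra and $\delta(\vol_n)=S_{n-1}$ by \eqref{eq_variationvol}, the module homomorphism property gives
$$\delta(\phi)=\delta(\phi*\vol_n)=\phi*\delta(\vol_n)=\phi*S_{n-1}.$$
The main obstacle is the passage from the dense subset $\{\phi_A\}$ to all of $\Val^{sm}$ for the $\delta$-homomorphism claim; this hinges on having continuity of $\delta$ with respect to the Fréchet quotient topology on $\Val^{sm}$, which is not used elsewhere in the paper and needs to be justified from the differential-form description of $\delta$.
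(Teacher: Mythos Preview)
Your proof is correct and follows essentially the same route as the paper's own argument: both handle $\glob$ and $C$ via the defining identity \eqref{eq_conv}, verify the $\delta$-homomorphism property first on the dense span of the $\phi_A$ using \eqref{eq_convprod} and \eqref{eq_modcharacterization}, and then derive \eqref{eq_deltaconv} identically. The paper simply asserts that $\delta$ is linear and continuous without further comment; your remark that this continuity needs justification, and your sketch via the differential-form description \eqref{eq_varmeas}, is a legitimate clarification rather than a gap.
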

\begin{proof}
Since $\glob(\Psi)=\Psi_f$ with $f=1$, equality (\ref{eq_conv}) yields
$$\glob(\phi*\Psi)=\phi*\glob(\Psi).$$
Hence, $\glob$ is a $\Val^{sm}$-module homomorphism.
Fix $\phi\in\Val^{sm}$, $\Psi\in\Area$, and let $\xi\in V^*$ be a linear functional. Using (\ref{eq_conv}), we obtain
\begin{align*}
	\xi(C(\phi*\Psi)) &= (\phi*\Psi)_\xi =\phi* \Psi_\xi\\
&= \phi * \xi(C(\Psi))\\ &=\xi(\phi * C(\Psi)).
\end{align*}
Thus, $C(\phi*\Psi)=\phi*C(\Psi)$.

Since $\delta:\Val^{sm}\rightarrow \Area$ is linear and continuous, it suffices to prove that
\begin{equation}\label{eq_deltahom}\delta(\phi_A*\psi)=\phi_A*\delta(\psi).\end{equation}
Fix $K,L\in \calK(V)$. Using the definition of the first variation and \eqref{eq_convprod}, we compute

\begin{align*}
\int_{S(V)} h_L\; \delta(\phi_A*\psi)(K) & = \dt \phi_A*\psi(K+tL)= \dt \psi(K+A+tL)\\
& =   \int_{S(V)} h_L\; \delta(\psi)(K+A)\\
& =   \int_{S(V)} h_L\; \phi_A*\delta(\psi)(K).
\end{align*}
Since differences of support functions lie dense in the space of continuous functions on the unit sphere, we obtain \eqref{eq_deltahom}. Relation \eqref{eq_deltaconv} follows now from
$$\delta(\phi)=\delta(\phi*\vol_n)=\phi*\delta(\vol_n)=\phi * S_{n-1}.$$
\end{proof}

\subsection{Angular area measures}

It is a well-known fact that the $k$-th intrinsic volume of a polytope $P\in \calK(V)$ is given by
$$\mu_k(P)= \sum_{F\in\calF_k(P)} \angle(F,P) \vol_k(F).$$
Here $\calF_k(P)$ denotes the set of $k$-dimensional faces of $P$ and $ \angle(F,P) $ denotes the normalized external angle of $P$ at its face $F$,  see e.g.\ \cite{schneider_book}*{p. 100}. A corresponding formula holds for the classical area measures $S_k$.

\begin{definition} We define $\Delta_k\in \Area$ by
$n \omega_{n-k} \Delta_k:=\binom{n}{k} S_k$.
\end{definition}

With this renormalization we have
$$\Delta_k(P,A)= \sum_{F\in\calF_k(P)} \frac{\calH^{n-1-k}(N(F,P)\cap A)}{(n-k)\omega_{n-k}} \vol_k(F),$$
where $N(F,P)$ denotes the normal cone of $P$ at its face $F$ and $\calH^d$ denotes the $d$-dimensional Hausdorff measure.
In particular, we see that
  $$\glob(\Delta_k)=\mu_k.$$ 

These considerations lead us to the following definition.

\begin{definition}
A smooth area measure $\Psi\in \Area$ is called \emph{angular} if for every polytope $P$
$$\Psi(P,A)= \sum_{k=0}^{n-1}\sum_{F\in\calF_k(P)} c_\Psi(\bar F) \frac{\calH^{n-1-k}(N(F,P)\cap A)}{(n-k)\omega_{n-k}} \vol_k(F),$$
where the number $c_\Psi(\bar F)$ depends only on $\bar F$, the unique translate of the affine span of $F$ which contains the origin. 
The space of angular area measures is denoted by $\Angular=\Angular(V)$.
\end{definition}

\begin{remark}
\begin{enumerate}
\item If $\Psi\in \Area_k$ is angular, then $\psi=\glob(\Psi)\in\Val^{sm}$ is even and homogeneous of degree $k$. In particular, the Klain function of $\psi$ coincides with $c_\Psi$. The converse, however, is false: There exists $\psi\in \Val^{sm}$, even and of degree $k$, such that there exists no angular area measure $\Psi$ with $\glob(\Psi)=\psi$, see \cite{parapatits_wannerer12}.

\item For curvature measures there exists a similar notion of angularity which was first introduced by Bernig, Fu, and Solanes in \cite{bernig_etal12}.
\end{enumerate}
\end{remark}

\begin{theorem}\label{thm_angularity}
Suppose $\Psi\in\Area$ is angular. Then  $C(\Psi)=0$ if and only if $\Psi$ is a linear combination of the $\Delta_k$.
\end{theorem}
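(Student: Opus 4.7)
The easy direction follows from Lemma~\ref{lem_centroids}: by the remark after Proposition~\ref{thm_firstvar}, $\delta\mu_{k+1}$ is a scalar multiple of $S_k$, hence of $\Delta_k$, so $\Delta_k\in\img\delta$ and $C(\Delta_k)=0$, whence $C(\sum_k c_k\Delta_k)=0$ by linearity.

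For the converse, I would first decompose $\Psi=\sum_{k=0}^{n-1}\Psi_k$ by homogeneity. The angular formula respects this grading, so each $\Psi_k\in\Area_k$ is angular with $c_{\Psi_k}=c_\Psi|_{\Grass_k(V)}$, and since $C$ preserves homogeneity the hypothesis forces $C(\Psi_k)=0$ for every $k$. A $k$-homogeneous angular area measure is uniquely determined by its function $c$ on $\Grass_k(V)$, because the angular formula fixes $\Psi_k(P,\cdot)$ on polytopes while smooth area measures depend weakly continuously on $K$ and polytopes are dense in $\calK(V)$. Since $c_{\Delta_k}\equiv 1$, the task reduces to showing that $c_{\Psi_k}$ is constant on $\Grass_k(V)$.

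The plan is to test $C(\Psi_k)=0$ on $(k+1)$-simplices. Fix $E\in\Grass_{k+1}(V)$ and let $\Sigma\subset E$ be a $(k+1)$-simplex with facets $F_0,\ldots,F_{k+1}$ and outer unit normals $\nu_i\in E$. The normal cone $N(F_i,\Sigma)$ is the half-space $\{\lambda\nu_i+w:\lambda\geq 0,\, w\in E^\perp\}$ inside $(\bar F_i)^\perp=\RR\nu_i\oplus E^\perp$, so $N(F_i,\Sigma)\cap S(V)$ is a hemisphere of $S((\bar F_i)^\perp)\cong S^{n-k-1}$. Rotational symmetry about the axis $\nu_i$ gives
$$\mathbf{v}(F_i,\Sigma):=\frac{1}{(n-k)\omega_{n-k}}\int_{N(F_i,\Sigma)\cap S(V)}u\,d\calH^{n-1-k}(u)=\alpha_{n-k}\,\nu_i$$
for a positive constant $\alpha_{n-k}$ depending only on $n-k$. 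Substituting this into the angular expression for $C(\Psi_k)(\Sigma)=0$ yields
$$\sum_{i=0}^{k+1} c_{\Psi_k}(\bar F_i)\,\vol_k(F_i)\,\nu_i=0.$$
Combining this with Minkowski's relation $\sum_i\vol_k(F_i)\nu_i=0$ and the fact that the space of linear relations among the $k+2$ outer normals of a non-degenerate $(k+1)$-simplex is one-dimensional forces $c_{\Psi_k}(\bar F_i)$ to be independent of $i$.

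To promote this to global constancy, any two $E_1,E_2\in\Grass_k(V)$ can be joined by a chain $E_1=H_0,\ldots,H_m=E_2$ with $\dim(H_j\cap H_{j+1})=k-1$ (via successive single-vector basis replacements). Each consecutive pair lies in the $(k+1)$-dim subspace $H_j+H_{j+1}$, in which one constructs a $(k+1)$-simplex whose facets include ones parallel to both $H_j$ and $H_{j+1}$; the key computation then gives $c_{\Psi_k}(H_j)=c_{\Psi_k}(H_{j+1})$, and chaining yields constancy of $c_{\Psi_k}$ on all of $\Grass_k(V)$, whence $\Psi_k=c_k\Delta_k$. I expect the main obstacle to be the key computation above, namely identifying the direction of $\mathbf{v}(F_i,\Sigma)$ and coupling it with Minkowski's relation to extract equality of $c_{\Psi_k}$ on the facet subspaces; the final chaining argument is routine.
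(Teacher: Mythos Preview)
Your proposal is correct and follows essentially the same route as the paper's proof: both test $C(\Psi_k)=0$ on a $(k+1)$-simplex, identify the centroid of each facet's normal-cone cap as a positive multiple of the facet normal, couple the resulting relation with Minkowski's identity to force the $c_{\Psi_k}(\bar F_i)$ to coincide, and then propagate constancy across $\Grass_k(V)$ by chaining through common $(k+1)$-dimensional subspaces. The only cosmetic differences are that the paper cites the classical fact $C(\Delta_k)=0$ directly for the easy direction (rather than going through $\Delta_k\in\img\delta$), and it computes the cap centroid explicitly as $\omega_{n-k-1}\,\nu_i$ rather than leaving the constant unnamed.
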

\begin{proof}
 Suppose  $C( \Psi)=0$. Without loss of generality we may assume $\Psi\in \Area_k$.  Let $T$ be a $(k+1)$-dimensional simplex having one vertex at the origin and let $U$ be the smallest linear subspace containing $T$.
 We denote by $F_0,\ldots,F_{k+1}$ the facets of $T$ and by $u_0,\ldots,u_{k+1}$ the facet unit normals which lie in $U$. Since 
$$\int_{S(V)\cap N(T,F_i)}u \; d\calH^{n-k-1}(u)=\omega_{n-k-1} u_i,$$
we obtain
$$
0= C(\Psi)(T)= \omega_{n-k-1} \sum_{i=0}^{k+1} c_\Psi( \bar F_i) \vol_k(F_i)u_i.
$$
Without loss of generality we may assume that $c_\Psi( \bar F_0)\neq 0$. Then from
$$ \vol_k(F_0) u_0=-\sum_{i=1}^{k+1}\frac{c_\Psi( \bar F_i)}{c_\Psi(\bar F_0)} \vol_k(F_i) u_i$$
together with the linear independence of $u_1,\ldots,u_{k+1}$ and the fact that
$$\vol_k(F_0) u_0=-\sum_{i=1}^{k+1} \vol_k(F_i) u_i,$$
we deduce 
$$ c_\Psi(\bar F_i)=c_\Psi(\bar F_0)$$
for $i\in\{1,\ldots,k+1\}$. In fact this shows that $c_\Psi$ attains the same value on all $k$-dimensional, linear subspaces contained in a fixed $(k+1)$-dimensional, linear subspace.
Since for two arbitrary $k$-dimensional,
linear subspaces $E_1$ and $E_2$ there exists a sequence of $k$-dimensional,
linear subspaces starting with $E_1$ and ending with $E_2$ such that two consecutive subspaces are contained in some $(k+1)$-dimensional, linear subspace,
 we conclude that $c_\Psi$ is constant and hence $\Psi=c_0\Delta_k$ for some number $c_0$.

Conversely, assume now that $\Psi$ is a linear combination of the $\Delta_k$. It is a  well-known fact, however, that for every convex body $K$ the measures $\Delta_k(K)$ have their centroids
 at the origin, see e.g.\ \cite{schneider_book}*{p.~281}. In other words, $C(\Delta_k)=0$ for $k=0,1,\ldots, n-1$ and therefore $C(\Psi)=0$. 
\end{proof}

An important class of examples of angular area measures is provided by constant coefficient area measures. To define constant coefficient area measures we need to introduce a certain variation of the normal cycle of a convex body $K$ which comes from replacing the sphere bundle by the disc bundle $V\times B(V)$ in the definition of $N(K)$. A similar construction was introduced by Bernig and Fu in \cite{bernig_fu06}, the difference to our definition here is that we remove the zero section from the disc bundle.
$$N_1(K)=\{(x,u)\in K\times V: \ 0<|u|\leq 1\ \text{and}\ u\ \text{is a normal of}\ K\ \text{at}\ x\}.$$
Observe that $N_1(K)$ has a boundary, namely $\partial N_1(K)=N(K)$. 
\begin{definition}
We call an area measure $\Psi \in\Area(V) $ a \emph{constant coefficient area measure} if there exists a constant coefficient form  $\omega\in \Lambda^n(V^*\times V^*)\subset\Omega^n(V\times V)$ such that
$$\Psi(K,A)=\int_{N_1(K)\cap p^{-1}(A)} \omega,$$
whenever $K\in \calK(V)$ and $A\subset S(V)$ is a Borel set. Here $p:V\times (B(V)\setminus\{0\}) \rightarrow S(V)$ is given by $p(x,v)=v/|v|$. 
\end{definition}

We denote by $N_1(F,P)$ the set of normals $v$ of $P$ at $F$ which satisfy $0<|v|\leq 1$. 

\begin{remark} Constant coefficient valuations were introduced by Bernig and Fu in \cite{bernig_fu11} and constant coefficient curvature measures were introduced by Bernig, Fu, and Solanes in \cite{bernig_etal12}.
\end{remark}

\begin{lemma}
Every constant coefficient area measures is angular.
\end{lemma}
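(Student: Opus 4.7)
My plan is to exploit the face decomposition of $N_1(P)$ for a polytope $P$ together with a bigrading of the constant coefficient form $\omega$. For a polytope $P$, one has the disjoint union
\begin{equation*}
N_1(P) = \bigsqcup_{k=0}^{n-1}\bigsqcup_{F\in\calF_k(P)} F \times N_1(F,P),
\end{equation*}
where $F$ lies in an affine translate of the $k$-dimensional linear subspace $\bar F$ and $N_1(F,P) \subset \bar F^\perp$ is the truncated normal cone. Thus the integral defining $\Psi(P,A)$ decomposes as a sum of contributions from pieces of bidimension $(k, n-k)$.

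Next I would split $\omega \in \Lambda^n(V^*\oplus V^*)$ according to bidegree as $\omega = \sum_j \omega^{(j,n-j)}$ with $\omega^{(j,n-j)} \in \Lambda^j V^* \otimes \Lambda^{n-j} V^*$. On a piece $F \times N_1(F,P)$ only $\omega^{(k,n-k)}$ can contribute for dimensional reasons. A further splitting using $V = \bar F \oplus \bar F^\perp$ then isolates a single piece: since $F$ is affinely parallel to $\bar F$, only first-factor differentials in $\bar F$-directions survive along $F$, and since $N_1(F,P) \subset \bar F^\perp$, only second-factor differentials in $\bar F^\perp$-directions survive along $N_1(F,P)$. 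Hence the only contributing component lies in the one-dimensional space $\Lambda^k \bar F^* \otimes \Lambda^{n-k}(\bar F^\perp)^*$, and it equals a scalar $c(\bar F)$ times the product of the Euclidean volume forms on $\bar F$ and on $\bar F^\perp$, where $c(\bar F)$ manifestly depends only on the subspace $\bar F$ (and on $\omega$).

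The remaining integral then factors. The integration over $F$ yields $\vol_k(F)$, and polar coordinates on $\bar F^\perp$ give
\begin{equation*}
\int_{N_1(F,P)\cap p^{-1}(A)} \eta_{\bar F^\perp} = \int_0^1 r^{n-k-1}\, dr \cdot \calH^{n-1-k}(N(F,P)\cap A) = \frac{\calH^{n-1-k}(N(F,P)\cap A)}{n-k},
\end{equation*}
where $\eta_{\bar F^\perp}$ denotes the Euclidean volume form on $\bar F^\perp$. Summing over all faces and setting $c_\Psi(\bar F):=\omega_{n-k}\, c(\bar F)$ reproduces the angular formula from the definition. The main obstacle is the bookkeeping in the middle step: one must verify both that the integrand really collapses to the single component in $\Lambda^k \bar F^* \otimes \Lambda^{n-k}(\bar F^\perp)^*$, and that the resulting scalar is independent of the particular face $F$ with given linear hull $\bar F$ and of its location in the ambient space. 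Translation invariance of the constant coefficient form makes the latter automatic.
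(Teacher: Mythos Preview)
Your proof is correct and follows essentially the same approach as the paper's: decompose $N_1(P)$ over faces, use the bidegree splitting of $\omega$ so that only $\omega^{(k,n-k)}$ contributes on a $k$-face, and then observe that the constant-coefficient nature of $\omega$ forces the integral over $F\times N_1(F,P)$ to factor as a constant (depending only on $\bar F$) times $\vol_k(F)$ times the spherical measure of the normal cone. The paper phrases the reduction slightly differently---it assumes by linearity that $\omega=\pi_1^*\omega_1\wedge\pi_2^*\omega_2$ has pure bidegree from the start and tests against smooth functions $f$ rather than Borel sets---but the substance is identical.
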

\begin{proof}
Let $\Psi$ be a constant coefficient area measure given by some $\omega\in\Lambda^n(V^*\times V^*)$. By linearity, it is sufficient to prove the lemma under the additional assumption that $\omega=\pi^*_1\omega_1\wedge\pi^*_2\omega_2$ with $\omega_1\in\Lambda^kV^*$ and $\omega_2\in\Lambda^{n-k}V^*$. 

Let $f$ be a smooth function on the unit sphere and let $P$ be a polytope. Then clearly
\begin{align*}
[N_1(P)](p^*f\wedge \omega)	&= \sum_{F\in\calF_k(P)} [F\times N_1(F,P)](p^*f\wedge \omega)\\
							&= \sum_{F\in\calF_k(P)}\; [F](\omega_1) [N_1(F,P)](p^*f\wedge \omega_2),
\end{align*}
where $[M]$ denotes the current which is given by integration over the manifold $M$. Since both $\omega_1$ and $\omega_2$ have constant coefficients, we obtain
$$[F](\omega_1)\; [N_1(F,P)](p^*f\wedge \omega_2)= c_\Psi(\bar F)\; \frac{\int_{S(V)\cap N(P,F)} f\; d\calH^{n-1-k}}{(n-k)\omega_{n-k}} \;\vol_k(F),$$
with some constant $c_\Psi(\bar F)$ depending only on the $k$-dimensional, linear subspace parallel to the face $F$. This proves the lemma.
\end{proof}

When does $\beta\in\Omega^{n-1}(SV)$ define a constant coefficient area measure? The following proposition gives a sufficient condition. 
For $(x,v)\in V\times (V\setminus \{0\})$ we put $r(x,v)=|v|$ and consider the radial vector field 
$$R=\grad r.$$

\begin{proposition}\label{prop_constantcoeff}
Suppose $\beta\in\Omega^{n-1}(V\times V)$ is translation-invariant in the first factor and $d\beta$ has constant coefficients. Then
$$\Psi(K,A)=\int_{N(K)\cap\pi_2^{-1}(A)} \beta $$
has constant coefficients if (i) $\beta$ is homogeneous of degree $0\leq k<n-1$ (in the variables of the first factor) and $i_R\beta=0$; or (ii)  $\beta$ is homogeneous of degree $ n-1$ and the
 coefficients of $\beta$ are linear functions. In both cases $\Psi$ is in particular angular.
\end{proposition}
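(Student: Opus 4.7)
My plan is to take $\omega:=d\beta$ as the candidate constant coefficient $n$-form---it lies in $\Lambda^n(V^*\times V^*)$ by hypothesis---and to establish the identity
$$\Psi(K,A)=\int_{N(K)\cap\pi_2^{-1}(A)}\beta=\int_{N_1(K)\cap p^{-1}(A)}d\beta$$
by a Stokes argument on the disc bundle. Once this is proved, $\Psi$ is a constant coefficient area measure by definition, and the last claim that $\Psi$ is angular follows immediately from the preceding lemma.

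The geometric input is that $N_1(K)\cap p^{-1}(A)$ is parametrized diffeomorphically by the cylinder $(N(K)\cap\pi_2^{-1}(A))\times(0,1]$ via $H(x,v,t)=(x,tv)$. I would truncate this cylinder at $t=\epsilon$, apply Stokes' theorem (using that $N(K)$ is a Legendrian cycle, so there is no boundary from the $N(K)$-direction), and then let $\epsilon\to 0$. The slice at $t=1$ reproduces $\int_{N(K)\cap\pi_2^{-1}(A)}\beta$, whereas the slice at $t=\epsilon$ contributes $-\int_{N(K)\cap\pi_2^{-1}(A)}h_\epsilon^*\beta$ with $h_\epsilon(x,v)=(x,\epsilon v)$; the left-hand side converges to $\int_{N_1(K)\cap p^{-1}(A)}d\beta$ by monotone convergence on the bounded set $N_1(K)$.

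The crux is to show that the inner boundary term vanishes as $\epsilon\to 0$. Writing $\beta=\sum g_{IJ}(v)\,dx^I\wedge dv^J$ with $|I|=k$ and $|J|=n-1-k$, a direct computation gives $h_\epsilon^*\beta=\sum \epsilon^{|J|}g_{IJ}(\epsilon v)\,dx^I\wedge dv^J$. In case (i) the factor $\epsilon^{n-1-k}$ with $k<n-1$ drives the integrand to zero, and the assumption $i_R\beta=0$ enters by ensuring that $H^*\beta$ carries no $dt$-component---one checks that $H_*(\partial_t)=R$ on the image, so $i_{\partial_t}H^*\beta=H^*(i_R\beta)=0$---which keeps the slice-by-slice form of Stokes clean. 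In case (ii) we have $|J|=0$ but the linearity of the coefficients forces $g_{I\emptyset}(\epsilon v)=\epsilon g_{I\emptyset}(v)\to 0$. Bounded convergence on the compact normal cycle then gives the vanishing in both cases. The main subtlety I anticipate is phrasing Stokes correctly when $A$ is only Borel: the identity should be read in the sense of currents, with $N_1(K)\cap p^{-1}(A)$ realized as $N_1(K)$ multiplied by the Borel indicator $\mathbf{1}_{p^{-1}(A)}$, and similarly for $N(K)$; beyond this, the proof reduces to the homogeneity analysis above.
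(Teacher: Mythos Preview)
Your approach is essentially the paper's: both take $\omega=d\beta$ as the constant-coefficient form, parametrize $N_1(K)$ over $N(K)\times(0,1]$ by radial scaling, apply Stokes, and kill the inner contribution via the homogeneity estimate $\|m_\epsilon^*\beta\|\leq C\epsilon$. The only technical difference is that the paper tests against a smooth function $f$ on $S(V)$ and uses a smooth radial cutoff $\eta_\varepsilon$ instead of your hard truncation at $t=\epsilon$; this avoids the Borel-$A$ side-boundary issue you flag (since $\partial N(K)=0$), and the hypothesis $i_R\beta=0$ then enters in exactly the way you describe---it kills the $d\tilde f$ contribution to the error term, equivalently the $dt$-component of $H^*\beta$.
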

\begin{proof}

For $K\in \calK^{sm}$ we define a diffeomorphism $\exp: \partial K\times (0,1]\rightarrow N_1(K)$ by
 $$\exp(x,t)=\exp_t(x)=(x,t\nu(x)).$$
Here $\nu(x)$ denotes the outer unit normal of $K$ at $x$. If $\omega\in \Omega^n(V\times V)$, then clearly
\begin{equation}\label{eq_pullbackexp}
\int_{N_1(K)} \omega = \int_{\partial K\times (0,1]} \exp^*\omega = \int_0^1 \left( \int_{\partial K} \exp_t^*(i_R\omega)\right) dt.
\end{equation}
Fix now a smooth function $f$ on the unit sphere and put $\tilde f:	=p^* f$ for its $0$-homogeneous extension to the disc bundle with zero section removed. Furthermore let $\eta_\varepsilon$ be the smooth cut-off function $\eta_\varepsilon(x,v)=h(|v|/\varepsilon)$, $0<\varepsilon<1$, where
$$h(t)=\left\{ \begin{array}{ll} 	1-e^{1-\frac{1}{1-t ^2}}	& \qquad t\in(0,1)\\
									0							& \qquad t\leq 0\\
									1							& \qquad t\geq 1

\end{array} \right.$$
Since the smooth form $\eta_\epsilon \pi_2^*f \wedge \beta$ is compactly supported on $N_1(K)$ and $\partial N_1(K)=N(K)$, we can use Stokes' theorem to obtain
$$\int_{N(K)}  \pi_2^*f \wedge \beta=  \int_{N_1(K)} d(\eta_\varepsilon \tilde f)\wedge \beta + \int_{N_1(K)}\eta_\varepsilon\; \tilde f \wedge d\beta.$$
To prove that $\beta $ defines a constant coefficient area measure it is therefore sufficient to show that 

\begin{equation}\label{eq_cutofflimit}
\lim_{\varepsilon \rightarrow 0}  \int_{N_1(K)} d(\eta_\varepsilon \tilde f)\wedge \beta =0.
\end{equation}
Using \eqref{eq_pullbackexp}, $i_R\beta=i_R d\tilde{f}=0$ and $i_R d\eta_\varepsilon(x,v)=\frac{1}{\varepsilon}h'(|v|/\varepsilon)$, we obtain

\begin{align*}
\int_{N_1(K)} d(\eta_\varepsilon \tilde f)\wedge \beta &=\int_0^1 \left( \int_{\partial K} \exp_t^*(\tilde f\wedge i_Rd\eta_\varepsilon\wedge \beta)\right) dt\\
&=\frac{1}{\varepsilon}\int_0^\varepsilon h'(t/\varepsilon) \left( \int_{\partial K} \exp_t^*(\tilde f\wedge\beta)\right) dt\\
&=\int_0^1 h'(t) \left( \int_{\partial K} \exp_{\varepsilon t}^*(\tilde f\wedge\beta)\right) dt\\
&=\int_0^1 h'(t) \left( \int_{N(K)} \pi_2^*f\wedge m_{\varepsilon t}^*\beta\right) dt,\\
\end{align*}
where the last line follows from $\exp_1(\partial K)=N(K)$ and $m_{\lambda}$ denotes multiplication in the second component, $m_{\lambda}(x,v)=(x,\lambda v)$. 
If $\beta$ is homogeneous of degree $0\leq k<n-1$, then there clearly exists a constant $C$, depending only on $\beta$, such that 
$$\| m_{\varepsilon t}^* \beta\| \leq C \varepsilon \qquad \text{on}\ V\times B(V)$$
whenever $0\leq \varepsilon, t\leq 1$. Here $\|\cdot\|$ denotes the comass norm, see \cite{federer69}*{1.8.1}. 
If $\beta$ is homogeneous of degree $k=n-1$, then assumption (ii) assures that the above bound holds as well. Therefore
$$\left| \int_0^1 h'(t) \left( \int_{N(K)} \pi_2^*f\wedge m_{\varepsilon t}^*\beta\right) dt\right| \leq C' \varepsilon,$$
for some constant $C'$. This proves \eqref{eq_cutofflimit} and the proposition.

\end{proof}

\section{Unitarily invariant area measures and their centroids} \label{sec_unitarilyinv}

In the previous section we have introduced smooth area measures and established some general properties of smooth area measures. 
In particular, we have shown that the space of smooth area measures is a non-trivial module over smooth valuations. 
In the present section we introduce the class of unitarily invariant, 
smooth area measures as a hermitian analogue of the classical area measures of convex bodies. 
The results obtained for general smooth area measures can be strengthened and made explicit for unitarily invariant area measures.
 The main results of this section are Theorem~\ref{prop_dimker}, which shows that the subspace of unitarily invariant area measures
 which arise as the first variation of unitarily invariant valuations coincides precisely with the kernel of the centroid map, and Theorem~\ref{thm_dim}, 
which gives the dimension of the vector space of unitarily equivariant valuations. As an application of Theorem~\ref{thm_dim}, 
we obtain a new characterization of the Steiner point map in hermitian vector spaces. 
The explicit description of the module of unitarily invariant area measures is given in Section~\ref{sec_module}.

\subsection{Unitarily invariant area measures}

In this section and in the rest of the article we assume that the underlying vector space $V$ equals $\CC^n$.  The standard action of the unitary group $U(n)$ on $\CC^n$ induces a natural action on $S\CC^n=\CC^n\times S^{2n-1}$. Explicitly, the action is given by the restriction of the diagonal action of $U(n)$ on $T\CC^n\cong \CC^n\times\CC^n$. Furthermore, we denote by $\overline{U(n)}= U(n)\ltimes \CC^n$ the group of unitary affine transformations of $\CC^n$ and we let $\overline{U(n)}$ act in the obvious way on $T\CC^n$ and $S\CC^n$. 

\begin{definition}
We call a smooth area measure $\Psi$  \emph{unitarily invariant} or \emph{$U(n)$-invariant} if 
$$\Psi(g K, g A)=\Psi(K,A)$$
whenever $g\in U(n)$, $K\in\calK(\CC^n)$, and $A\subset S^{2n-1}$ is a Borel set.
The space of all unitarily invariant area measures is denoted by $\Area^{U(n)}$. 
\end{definition}

We denote by $\Val^{U(n)} \subset \Val^{sm}$ the subspace of unitarily invariant valuations.

\begin{remark}
\begin{itemize}
\item[(1)] If $\Psi$ is an unitarily invariant area measure, then there exists a $\overline{U(n)}$-invariant, smooth $(n-1)$-form $\omega$ on the sphere bundle such that
$$\Psi(K,A)=\int_{N(K)\cap\pi^{-1}_2(A)}\omega$$
whenever $K\in\calK(\CC^n)$ and $A\subset S^{2n-1}$ is Borel. 
Indeed, since $\Psi$ is a smooth area measure, it is represented by some translation-invariant differential form on the sphere bundle. Since the unitary group is compact, we can average with respect to the Haar probability measure to obtain a $U(n)$-invariant differential form. 

\item[(2)] If $\Psi\in \Area^{U(n)}$, then clearly $\glob(\Psi)\in\Val^{U(n)}$. 
In fact, every unitarily invariant valuation in $\bigoplus_{k=0}^{2n-1}\Val_k^{U(n)}$ is the globalization of some unitarily invariant area measure, see \eqref{eq_globarea} below.
\item[(3)] If $\mu\in\Val^{U(n)}$, then $\delta\mu\in\Area^{U(n)}$. 

\end{itemize}

\end{remark}

Since every unitarily invariant area measure is represented by an  $\overline{U(n)}$-invariant form on $S\CC^n$, we start our investigation of unitarily invariant area measures with an explicit description of the algebra of $\overline{U(n)}$-invariant forms on $S\CC^n$. To this end we denote by $(z_1,\ldots,z_n,\zeta_1,\ldots,\zeta_n)$ the canonical coordinates on $\CC^n\times\CC^n$, $z_i=x_i+\ii y_i$ and $\zeta_i=\xi_i+\ii \eta_i$. As in \cite{bernig_fu11} we consider the $\overline{U(n)}$-invariant $1$-forms
\begin{align*}
	\alpha & = \sum_{i=1}^n \xi_idx_i +\eta_i dy_i,\\
	\beta & = \sum_{i=1}^n \xi_idy_i -\eta_i dx_i,\\
	\gamma & = \sum_{i=1}^n \xi_id\eta_i -\eta_i d\xi_i,
\end{align*}
and the $\overline{U(n)}$-invariant $2$-forms 
\begin{align*}
    \theta_0&=\sum_{i=1}^n d\xi_i\wedge d\eta_i,\\
    \theta_1&= \sum_{i=1}^n dx_i\wedge d\eta_i- dy_i\wedge d\xi_i,\\
    \theta_2&= \sum_{i=1}^n dx_i\wedge dy_i,\\
    \theta_s&=\sum_{i=1}^n dx_i\wedge d\xi_i+ dy_i\wedge d\eta_i
\end{align*}
on $T\CC^n$. The restrictions of these forms to the sphere bundle generate the algebra of $\overline{U(n)}$-invariant forms on $S\CC^n$, see \cite{bernig_fu11} or, for the case of $\overline{SU(n)}$-invariant forms, \cite{bernig09}. Observe that $\alpha$ is precisely the canonical contact form on $S\CC^n$, $d\alpha=-\theta_s$, and that the Reeb vector field $T$ on $S\CC^n$ is given in coordinates  by
$$T=\sum_{i=1}^n \xi_i\frac{\partial}{\partial x_i} + \eta_i\frac{\partial}{\partial y_i}.$$

For non-negative integers $k,q$ with $\max\{0,k-n\}\leq q\leq \frac{k}{2}<n$ we put as in \cites{bernig_fu11,bernig_etal12}
\begin{align*}
\beta_{k,q}&=c_{n,k,q}\beta\wedge\theta_0^{n-k+q}\wedge\theta_1^{k-2q-1}\wedge\theta_2^q,\qquad q<\frac{k}{2},\\
\gamma_{k,q}&=\frac{c_{n,k,q}}{2}\gamma\wedge\theta_0^{n-k+q-1}\wedge\theta_1^{k-2q}\wedge\theta_2^q, \qquad k-n< q,
\end{align*}
where 
$$c_{n,k,q}=\frac{1}{q!(n-k+q)!(k-2q)!\omega_{2n-k}}.$$
We denote by $B_{k,q}$ and $\Gamma_{k,q}$ the area measures represented by $\beta_{k,q}$ and $\gamma_{k,q}$, respectively. Since the normal cycle vanishes on forms which are multiplies of $\alpha$ or $d\alpha$, we see that $\Area^{U(n)}$ is spanned by $B_{k,q}$ and $\Gamma_{k,q}$. We will see below that these area measures form in fact a basis of $\Area^{U(n)}$. We know from \cite{bernig_fu11}*{Proposition 3.4} that
\begin{equation}\label{eq_globarea}\glob(B_{k,q})=\glob(\Gamma_{k,q})=\mu_{k,q}.\end{equation}                                                                
Here the $\mu_{k,q}$ denote the \emph{hermitian intrinsic volumes}, see \cite{bernig_fu11}.

\begin{definition}
We define $\Delta_{k,q}\in\Area^{U(n)}$ by
$$\Delta_{k,q}=\frac{k-2q}{2n-k}B_{k,q} + \frac{2(n-k+q)}{2n-k}\Gamma_{k,q},\qquad \max\{0,k-n\}\leq q\leq \frac{k}{2}<n.$$
In particular, $\Delta_{2q,q}=\Gamma_{2q,q}$ and $\Delta_{k,k-n}=B_{k,k-n}$.
For $k>2q$, $q>k-n$ we  also define
\begin{align*}
N_{k,q}	&=\Delta_{k,q}-B_{k,q}\\
		&=\frac{2(n-k+q)}{2n-k}(\Gamma_{k,q}-B_{k,q}).
\end{align*}
\end{definition}

\begin{remark}
These definitions mimic the definitions for unitarily invariant curvature measures which were first introduced by Bernig, Fu, and Solanes in \cite{bernig_etal12}. The important point to note is that  
$$\glob(\Delta_{k,q})=\mu_{k,q}\qquad \text{and}\qquad \glob(N_{k,q})=0.$$
So far we do not know whether always $N_{k,q}\neq 0$. This will follow from Proposition~\ref{prop_basis}. 
\end{remark}

\begin{lemma}\label{lem_deltaangular}
The subspace of angular area measures is spanned by the $\Delta_{k,q}$.

\end{lemma}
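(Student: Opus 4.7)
The plan is to prove the two inclusions separately. For the containment $\spn\{\Delta_{k,q}\}\subset\Angular^{U(n)}$, I would verify that each generator is angular, then invoke linearity. In the extreme cases $\Delta_{2q,q}=\Gamma_{2q,q}$ and $\Delta_{k,k-n}=B_{k,k-n}$, the representing forms $\gamma_{2q,q}$ and $\beta_{k,k-n}$ already have the homogeneity and coefficient-type (constant or linear in the fibre coordinates $\xi,\eta$) required by Proposition~\ref{prop_constantcoeff}; the preceding lemma then gives angularity. For intermediate $(k,q)$ with $2q<k<n+q$, the particular convex combination $\tfrac{k-2q}{2n-k}\beta_{k,q}+\tfrac{2(n-k+q)}{2n-k}\gamma_{k,q}$ should be calibrated so that, possibly after modification by a multiple of $\alpha$, its radial contraction $i_R$ vanishes, making Proposition~\ref{prop_constantcoeff}(i) applicable and identifying $\Delta_{k,q}$ as a constant-coefficient, hence angular, area measure.

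For the reverse inclusion, let $\Psi\in\Angular^{U(n)}$. Since angularity and $U(n)$-invariance both pass to homogeneous components, I may assume $\Psi\in\Area_k$. By the Remark following the definition of angularity, $\glob(\Psi)\in\Val_k^{U(n)}$ is even and its Klain function equals $c_\Psi$. The hermitian intrinsic volumes $\mu_{k,q}$ form a basis of $\Val_k^{U(n)}$ by Bernig-Fu and satisfy $\mu_{k,q}=\glob(\Delta_{k,q})$ via \eqref{eq_globarea}, so there are unique scalars $\lambda_q$ with $\glob(\Psi)=\sum_q\lambda_q\glob(\Delta_{k,q})$. Setting $\tilde\Psi=\Psi-\sum_q\lambda_q\Delta_{k,q}$ produces an angular, $U(n)$-invariant area measure with $\glob(\tilde\Psi)=0$; Klain's injectivity theorem then forces $c_{\tilde\Psi}=0$, so the defining polytope formula for angularity gives $\tilde\Psi(P,\,\cdot\,)=0$ for every polytope $P$. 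The weak continuity of smooth area measures in the convex-body argument, together with the density of polytopes in $\calK(\CC^n)$, extends this to $\tilde\Psi=0$, yielding $\Psi=\sum_q\lambda_q\Delta_{k,q}$.

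The principal obstacle is the first step: exhibiting a constant-coefficient representative for a generic $\Delta_{k,q}$. One has to contract $R$ carefully with each factor $\beta,\gamma,\theta_0,\theta_1,\theta_2$ on the sphere bundle and verify that the specific coefficients $\tfrac{k-2q}{2n-k}$ and $\tfrac{2(n-k+q)}{2n-k}$ arrange for cancellation of the non-constant terms modulo multiples of $\alpha$ and $d\alpha$. Once angularity of $\Delta_{k,q}$ is in hand, the reverse inclusion is essentially formal from Klain's theorem, Bernig-Fu's basis of $\Val_k^{U(n)}$, and the continuity of smooth area measures.
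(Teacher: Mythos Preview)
Your approach is correct and essentially coincides with the paper's. For the forward inclusion the paper proceeds exactly as you outline: one computes $i_R\beta=i_R\gamma=i_R\theta_2=0$, $i_R\theta_0=r^{-1}\gamma$, $i_R\theta_1=-r^{-1}\beta$, and then checks directly that $i_R$ annihilates the combination $\tfrac{k-2q}{2n-k}\beta_{k,q}+\tfrac{2(n-k+q)}{2n-k}\gamma_{k,q}$ (the $\beta\wedge\beta$ and $\gamma\wedge\gamma$ terms die); no modification by a multiple of $\alpha$ is needed, and this single computation already covers the extreme cases you treat separately. One also needs (and you should check) that $d$ of this combination has constant coefficients, which is immediate from $d\beta=\theta_1$, $d\gamma=2\theta_0$.

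Your reverse inclusion is precisely what the paper's laconic ``for dimensional reasons'' encodes: the map $\Psi\mapsto c_\Psi$ is injective on angular area measures (the polytope formula plus continuity), and the $\Delta_{k,q}$ globalize to the basis $\{\mu_{k,q}\}$ of $\bigoplus_{k<2n}\Val_k^{U(n)}$. One small remark: you do not actually need Klain's injectivity theorem here; once $\glob(\tilde\Psi)=0$ you have $c_{\tilde\Psi}=\Klain_{0}=0$ tautologically.
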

\begin{proof}
	Put $r^2=\sum_{i=0}^n \left(\xi_i^2+\eta_i^2\right)$ and let $R=\grad r$ be the radial vector field on $\CC^n\times (\CC^n\setminus\{0\})$. Since $i_R\beta=i_R\gamma=i_R\theta_2=0,$
	$$i_R\theta_0=r^{-1}\gamma,\qquad\text{and}\qquad  i_R\theta_1=-r^{-1}\beta,$$
	we obtain
	$$i_R\left(\frac{k-2q}{2n-k}\beta_{k,q} + \frac{2(n-k+q)}{2n-k}\gamma_{k,q}\right)=0.$$
	Proposition~\ref{prop_constantcoeff} implies that the $\Delta_{k,q}$ are angular. For dimensional reasons they span the subspace of angular area measures.
\end{proof}

\begin{proposition}\label{prop_basis}
The area measures $\{B_{k,q}\}\cup\{\Gamma_{k,q}\}$ form a basis of $\Area^{U(n)}$. The same holds true for  $\{\Delta_{k,q}\}\cup\{N_{k,q}\}$.
\end{proposition}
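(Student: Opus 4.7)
The spanning statement for $\{B_{k,q}\}\cup\{\Gamma_{k,q}\}$ is already noted just before the proposition: the algebra of $\overline{U(n)}$-invariant forms on $S\CC^n$ is generated by $\alpha,\beta,\gamma,\theta_0,\theta_1,\theta_2,\theta_s$ (see \cite{bernig_fu11}), and since the normal cycle vanishes on multiples of $\alpha$ and $d\alpha=-\theta_s$, every $\overline{U(n)}$-invariant area measure can be written as a combination of the $B_{k,q}$ and $\Gamma_{k,q}$. My task is therefore to establish linear independence and to pass to the second basis.

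For linear independence I would first apply the globalization map: if $\sum a_{k,q}B_{k,q}+\sum b_{k,q}\Gamma_{k,q}=0$, then by \eqref{eq_globarea} one has $\sum(a_{k,q}+b_{k,q})\mu_{k,q}=0$. Since the hermitian intrinsic volumes $\mu_{k,q}$ form a basis of $\bigoplus_k\Val_k^{U(n)}$ (Bernig--Fu \cite{bernig_fu11}), we deduce $a_{k,q}+b_{k,q}=0$ for every admissible pair. For boundary pairs where only one of $B_{k,q},\Gamma_{k,q}$ is defined this kills the coefficient immediately; for interior pairs $\max\{0,k-n\}<q<k/2$ the problem reduces to showing the differences $B_{k,q}-\Gamma_{k,q}$ (equivalently, the $N_{k,q}$) are linearly independent.

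The key remaining step is to distinguish $B_{k,q}$ from $\Gamma_{k,q}$. My plan is to exploit the convolution operator $\Psi\mapsto 2\mu_{2n-1}*\Psi$ which, by Lemma~\ref{lem_convintrinsic}, is realized on representing forms by the Lie derivative $\calL_T$ along the Reeb field. A direct calculation on generators gives $\calL_T\alpha=0$, $\calL_T\beta=\gamma$, $\calL_T\gamma=0$, $\calL_T\theta_0=\calL_T\theta_s=0$, $\calL_T\theta_1=2\theta_0$ and $\calL_T\theta_2=\theta_1$. Leibniz expansion then shows that $2\mu_{2n-1}*B_{k,q}$ picks up a nonzero $\Gamma_{k-1,q}$-contribution together with $B$-type terms ($B_{k-1,q}$ and $B_{k-1,q-1}$), whereas $2\mu_{2n-1}*\Gamma_{k,q}$ is purely of $\Gamma$-type (involving only $\Gamma_{k-1,q}$ and $\Gamma_{k-1,q-1}$). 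Induction on the homogeneity degree $k$ then lets one separate the two classes of contributions in any hypothetical relation and force the remaining coefficients to vanish. An arguably cleaner route, bypassing the induction, is to quote the explicit dimension count of the quotient $\Omega^{2n-1}(S\CC^n)^{\overline{U(n)}}/(\alpha,d\alpha)$ implicit in \cite{bernig_fu11}: spanning together with a matching cardinality then forces linear independence.

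The second assertion follows at once from the first: for each interior pair $(k,q)$ the change of basis from $(B_{k,q},\Gamma_{k,q})$ to $(\Delta_{k,q},N_{k,q})$ is given by a $2\times 2$ matrix with nonzero determinant, as is immediate from the definitions of $\Delta_{k,q}$ and $N_{k,q}$; for boundary pairs $\Delta_{k,q}$ coincides with the unique existing member and no $N_{k,q}$ is introduced. The main obstacle is the induction argument in the third paragraph: both $\calL_T\beta_{k,q}$ and $\calL_T\gamma_{k,q}$ produce several terms mixing adjacent indices, and careful bookkeeping modulo the relations $\alpha$ and $d\alpha=-\theta_s$ is needed to confirm that $B$-type and $\Gamma$-type contributions never conspire to cancel one another.
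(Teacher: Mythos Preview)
Your globalization step and the passage from $\{B_{k,q},\Gamma_{k,q}\}$ to $\{\Delta_{k,q},N_{k,q}\}$ via an invertible $2\times 2$ change of basis are fine. The gap is in the $\calL_T$-induction. The map $\Psi\mapsto\hat t*\Psi$ is \emph{not} injective on the span of the differences $B_{k,q}-\Gamma_{k,q}$, so it cannot force the remaining coefficients to vanish. Concretely, using the Lie-derivative identities you list,
\[
\calL_T\beta_{1,0}=c_{n,1,0}\,\gamma\wedge\theta_0^{n-1}=\calL_T\gamma_{1,0},
\]
so $\hat t*(B_{1,0}-\Gamma_{1,0})=0$ and the induction has no base case. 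More generally, the number of $N_{k,q}$'s at level $k$ can exceed the number at level $k-1$ (for instance from $k=1$ to $k=0$), so convolving with $\hat t$ loses information. Your fallback of citing a dimension count of $\Omega^{2n-1}(S\CC^n)^{\overline{U(n)}}/(\alpha,d\alpha)$ from \cite{bernig_fu11} is also not available: that paper computes the dimension of $\Val^{U(n)}_k$, which is a strictly smaller quotient involving the full Rumin kernel rather than just $(\alpha,d\alpha)$.

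The paper's argument avoids $\calL_T$ entirely and instead uses the centroid map $C$. One bounds $\dim\ker C$ from below by $\dim\img(\delta|_{\Val^{U(n)}})=\dim\Val^{U(n)}-1$ (since $C\circ\delta=0$ and $\ker\delta=\RR\chi$). One bounds $\dim\img C$ from below by applying $C$ to the span of the $\Delta_{k,q}$: these are linearly independent because their globalizations $\mu_{k,q}$ are, they are all angular (Lemma~\ref{lem_deltaangular}), and by Theorem~\ref{thm_angularity} the only angular measures annihilated by $C$ are combinations of the $2n$ classical $\Delta_k$. Adding the two bounds via rank--nullity matches the cardinality of the spanning set, forcing linear independence. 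The point is that the centroid map, unlike $\calL_T$, does detect the difference $B_{1,0}-\Gamma_{1,0}$: this difference is angular with nonconstant Klain function, so $C$ does not kill it.
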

\begin{proof}
Since the area measures $\{B_{k,q}\}\cup\{\Gamma_{k,q}\}$ span $\Area^{U(n)}$, we have  
$$\dim\Area^{U(n)}\leq n^2+n+1= 2\dim\Val^{U(n)}-2n-1.$$
To prove the proposition, it is sufficient to show that the above inequality is in fact an equality. 
Consider the restriction of the centroid map to the space of unitarily invariant area measures. We denote it by the same symbol $C$. Since $C$ is a linear map, we clearly have
\begin{equation}\label{eq_kerCimC}
\dim \Area^{U(n)}=\dim \ker C + \dim \img C
\end{equation}
Since the kernel of the first variation map is $1$-dimensional by Proposition~\ref{thm_firstvar} and since $C\circ \delta=0$ by Lemma~\ref{lem_centroids}, we obtain
$$
\dim \ker C\geq \dim \img (\delta|_{\Val^{U(n)}})= \dim \Val^{U(n)}-1.
$$
Furthermore, the angularity of the $\Delta_{k,q}$ (see Lemma~\ref{lem_deltaangular}) and Theorem~\ref{thm_angularity} yield
\begin{equation}\label{eq_dimimC}
\dim \img C\geq  \dim\Val^{U(n)}-2n.
\end{equation}
Thus,
\begin{equation}\label{eq_dimarea}
\dim\Area^{U(n)}=2\dim\Val^{U(n)}-2n-1.
\end{equation}
\end{proof}

\begin{corollary} \label{cor_dimarea}
$\dim\Area_k^{U(n)}=\dim\Val_{k}^{U(n)}+\dim\Val_{k+1}^{U(n)}-1$, $0\leq k\leq 2n-1$.
\end{corollary}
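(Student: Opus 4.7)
Since Proposition~\ref{prop_basis} supplies an explicit basis of $\Area^{U(n)}$, the corollary is a purely combinatorial identity, and my strategy is a direct dimension count degree by degree. The conceptual work has already been done in the proposition, so I do not anticipate a genuine obstacle here.

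First I would read off the admissible ranges of $q$ from the definitions of $\beta_{k,q}$ and $\gamma_{k,q}$. Non\-negativity of the exponents of $\theta_0,\theta_1,\theta_2$ in these forms and of the arguments of the factorials in $c_{n,k,q}$ shows that $B_{k,q}\in\Area^{U(n)}_k$ exactly for integers $q$ with $\max\{0,k-n\}\le q<k/2$, while $\Gamma_{k,q}\in\Area^{U(n)}_k$ exactly for integers $q$ with $\max\{0,k-n+1\}\le q\le k/2$. On the valuation side, the hermitian intrinsic volumes $\mu_{k,q}$ form a basis of $\Val^{U(n)}_k$ indexed by the set $I_k:=\{q\in\mathbb{Z}:\max\{0,k-n\}\le q\le k/2\}$, so that $\dim\Val^{U(n)}_k=|I_k|$.

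Next I would compare these three index sets. The $B_{k,q}$-indices coincide with $I_k$ when $k$ is odd, and miss only the top index $q=k/2$ when $k$ is even. After relabelling the lower bound, the $\Gamma_{k,q}$-indices coincide with $I_{k+1}$ when $k$ is even, and miss only the top index $q=(k+1)/2$ of $I_{k+1}$ when $k$ is odd. In either parity exactly one element is lost, so Proposition~\ref{prop_basis} gives
\[
\dim\Area^{U(n)}_k=\#\{B_{k,q}\}+\#\{\Gamma_{k,q}\}=|I_k|+|I_{k+1}|-1=\dim\Val^{U(n)}_k+\dim\Val^{U(n)}_{k+1}-1,
\]
which is the desired formula.

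The only minor book-keeping is to check that the element declared "missing" actually belongs to the ambient set: $k/2\in I_k$ when $k$ is even requires $k\le 2n$, and $(k+1)/2\in I_{k+1}$ when $k$ is odd requires $k\le 2n-1$; both conditions hold throughout the stated range $0\le k\le 2n-1$. As a sanity check, summing the identity over $0\le k\le 2n-1$ telescopes to the global formula \eqref{eq_dimarea} established in the proof of Proposition~\ref{prop_basis}.
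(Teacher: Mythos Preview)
Your proof is correct and is precisely the counting argument the paper intends: the corollary is stated without proof immediately after Proposition~\ref{prop_basis}, and the only thing to do is to count the basis elements $B_{k,q}$ and $\Gamma_{k,q}$ of fixed degree $k$ and compare with the index sets $I_k$ and $I_{k+1}$ of the hermitian intrinsic volumes, exactly as you do. One small remark: the phrase ``after relabelling the lower bound'' is unnecessary, since the lower bounds $\max\{0,k-n+1\}$ for $\Gamma_{k,q}$ and $\max\{0,(k+1)-n\}$ for $I_{k+1}$ are literally equal.
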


The next proposition characterizes those unitarily invariant area measures which lie in the kernel of the centroid map. 
\begin{theorem}\label{prop_dimker}
Let $\Psi\in \Area^{U(n)}$. Then 
$$C(\Psi)=0\quad \text{if and only if}\quad \Psi=\delta\phi $$
for some $\phi\in\Val^{U(n)}$.

\end{theorem}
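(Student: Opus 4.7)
The plan is to deduce the theorem as a consequence of the dimension count already carried out in the proof of Proposition~\ref{prop_basis}. One direction is immediate: Lemma~\ref{lem_centroids} tells us that $C \circ \delta = 0$ on all of $\Val^{sm}$, so $\delta(\Val^{U(n)}) \subseteq \ker C|_{\Area^{U(n)}}$. The content lies entirely in proving the reverse inclusion, and I would do this by showing that the two subspaces have the same dimension.

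First I would pin down the kernel of $\delta\colon \Val^{sm} \to \Area$. If $\delta\phi = 0$, substituting $L = K$ in \eqref{eq_firstvar} gives $\frac{d}{dt}\big|_{t=0}\phi((1+t)K) = 0$ for every $K$. Applying the McMullen decomposition $\phi = \sum_{k=0}^n \phi_k$ with $\phi_k$ homogeneous of degree $k$, this forces $\sum_{k \geq 1} k\, \phi_k(K) = 0$ for all $K$, hence $\phi_k = 0$ for $k \geq 1$ and $\phi$ is a constant multiple of $\chi$. Restricting to the unitarily invariant subspace, this yields
$$\dim \delta(\Val^{U(n)}) = \dim \Val^{U(n)} - 1.$$

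Next I would recall that the proof of Proposition~\ref{prop_basis} established the chain of inequalities
\begin{align*}
 2\dim \Val^{U(n)} - 2n - 1
 &\geq \dim \Area^{U(n)}
 = \dim \ker C + \dim \img C \\
 &\geq (\dim \Val^{U(n)} - 1) + (\dim \Val^{U(n)} - 2n) \\
 &= 2\dim \Val^{U(n)} - 2n - 1,
\end{align*}
where the first estimate uses the spanning set $\{B_{k,q}\}\cup\{\Gamma_{k,q}\}$; the bound $\dim \ker C \geq \dim \Val^{U(n)} - 1$ is the inclusion above combined with Lemma~\ref{lem_centroids}; and $\dim \img C \geq \dim \Val^{U(n)} - 2n$ comes from Lemma~\ref{lem_deltaangular} together with Theorem~\ref{thm_angularity} (the angular measures form a $\dim\Val^{U(n)}$-dimensional subspace of $\Area^{U(n)}$, and $C$ kills exactly the $2n$-dimensional subspace spanned by $\Delta_0,\dots,\Delta_{2n-1}$ within it). Equality must hold throughout, so in particular $\dim \ker C = \dim \Val^{U(n)} - 1 = \dim \delta(\Val^{U(n)})$. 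Combined with the trivial inclusion $\delta(\Val^{U(n)}) \subseteq \ker C$, this forces $\ker C = \delta(\Val^{U(n)})$, which is the desired equivalence.

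The only delicate point is verifying that $\ker \delta$ is one-dimensional, and the proof of this is the short calculation above. There is no circularity in using Proposition~\ref{prop_basis}, since its proof invokes only Lemma~\ref{lem_centroids}, Lemma~\ref{lem_deltaangular}, Theorem~\ref{thm_angularity}, and the elementary identification of $\ker \delta$, none of which rely on the present theorem.
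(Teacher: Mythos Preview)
Your proof is correct and follows exactly the paper's approach: the paper's proof simply cites the equalities and inequalities \eqref{eq_kerCimC}, \eqref{eq_dimimC}, \eqref{eq_dimarea} established in the proof of Proposition~\ref{prop_basis}, together with $C\circ\delta=0$, to conclude $\dim\ker C\leq \dim\Val^{U(n)}-1$, which forces the inclusion $\delta(\Val^{U(n)})\subseteq\ker C$ to be an equality. Two minor remarks: in the McMullen decomposition the sum should run to $2n$ rather than $n$ (since $\dim_\RR\CC^n=2n$), and your parenthetical explanation of \eqref{eq_dimimC} slightly overcounts the angular subspace (the $\Delta_{k,q}$ span a space of dimension $\dim\Val^{U(n)}-1$, not $\dim\Val^{U(n)}$, since $k\leq 2n-1$), but neither affects the argument since you are ultimately just invoking the chain of inequalities already proved.
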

\begin{proof}
Since $C\circ\delta =0$, it is sufficient show that the kernel of the map $C:\Area^{U(n)}\rightarrow \Vector^{sm}$ has dimension at most $\dim \Val^{U(n)}-1$. This follows immediately from \eqref{eq_kerCimC}, \eqref{eq_dimimC}, and \eqref{eq_dimarea}.
\end{proof}

\subsection{Unitarily equivariant valuations} 

If $\varphi\in \Vector(\CC^n)$ is the image of a unitarily invariant area measure under the centroid map, then clearly
$$ \varphi(gK)= g\varphi(K)$$
for every $g\in U(n)$ and $K\in\calK(\CC^n)$. This motivates the following definition.

\begin{definition} 
We call a $\CC^n$-valued valuation $\varphi\in \Vector(\CC^n)$ \emph{unitarily equivariant} or $U(n)$-\emph{equivariant} if 
$$ \varphi(gK)= g\varphi(K)$$
for every $g\in U(n)$ and $K\in\calK(\CC^n)$. The vector space of $U(n)$-equivariant valuations is denoted by $\Vector^{U(n)}$.
\end{definition}

As a vector space, the set of translation-invariant, continuous valuations with values in $V$ is naturally isomorphic to $\Val(V)\otimes V$. Under this isomorphism, the natural $GL(V)$-action on $V$-valued valuations,
$$(g\cdot \varphi)(K)= g\varphi(g^{-1}K),$$
corresponds to the standard action of $GL(V)$ on the tensor product $\Val\otimes V$. There exists no non-trivial, continuous, translation-invariant,
 $V$-valued valuation which is $SO(V)$-equivariant, see e.g.\ \cite{alesker_etal}. We can recast this in representation theoretic terms by saying that the subspace of $SO(V)$-invariant elements of
 $\Val\otimes V$ is trivial, i.e.\ we have $(\Val\otimes V)^{SO(V)}=\{0\}$.

Recall that the irreducible complex representations of $SO(2n)$ are parametrized by their highest weights which are tuples of integers $\lambda=(\lambda_1,\ldots,\lambda_n)$
satisfying $\lambda_1\geq \ldots \geq \lambda_{n-1}\geq |\lambda_n|$, see e.g.\ \cite{broecker_dieck85}*{p. 274}. It is possible to describe the decomposition of tensor products
of irreducible representations in terms of this parametrization.  For example an application of Klimyk's formula (see e.g.\ \cite{humphreys72}*{Ex. 24.9})
yields
$$\Gamma_\lambda \otimes \CC^{2n}=\sum_\nu \Gamma_\nu,$$
where the sum extends over all  $\nu$ satisfying $\nu= \lambda \pm e_i$ for some $i$. 

The next lemma follows from Helgason's theorem (see e.g. \cite{takeuchi94}*{p. 151}) applied to the symmetric space $SO(2n)/U(n)$;  
it has been used in \cite{alesker03} to compute the dimension of the space of unitarily invariant valuations. 

\begin{lemma}
	In every irreducible complex $SO(2n)$-representation the subspace of $U(n)$-invariant vectors is at most $1$-dimensional. 
This subspace is $1$-dimensional if and only if the highest weight $\nu$ of the irreducible  $SO(2n)$-representation satisfies 

\begin{itemize}
	\item[(i)]
\begin{equation}\label{eq_un_even}\nu_1=\nu_2\geq\nu_3=\nu_4\geq \cdots\geq\nu_{n-1}=\nu_n\end{equation}
if $n$ is even; or
\item[(ii)] 
\begin{equation}\label{eq_un_odd}\nu_1=\nu_2\geq\nu_3=\nu_4\geq \cdots\geq\nu_{n-2}=\nu_{n-1}\geq \nu_n=0\end{equation}
if $n$ is odd.
\end{itemize}
\end{lemma}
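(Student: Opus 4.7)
The plan is to apply the Cartan--Helgason theorem to the compact symmetric pair $(G,K)=(SO(2n),U(n))$. This theorem asserts that in every irreducible complex representation $V_\lambda$ of $G$ with highest weight $\lambda$, the subspace $V_\lambda^K$ of $K$-fixed vectors is at most one-dimensional, which immediately yields the first assertion. Moreover, $V_\lambda^K$ is exactly one-dimensional if and only if $\lambda$ is a \emph{spherical weight}: it must vanish on the orthogonal complement (inside a chosen Cartan subalgebra of $\mathfrak{g}$) of a maximal abelian subspace $\frakaa\subset \frakmm$, and it must satisfy $2\langle \lambda,\alpha\rangle/\langle \alpha,\alpha\rangle\in\ZZ_{\geq 0}$ for every positive restricted root $\alpha$.

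Next I would identify the restricted root data of $SO(2n)/U(n)$. The relevant involution is conjugation by a fixed complex structure $J$ on $\RR^{2n}$, and the pair is of type DIII in Helgason's classification. A direct computation produces a maximal abelian subspace $\frakaa\subset\frakmm$ of dimension $\lfloor n/2\rfloor$, with restricted root system $C_m$ for $n=2m$ and $BC_m$ for $n=2m+1$. In the standard orthogonal coordinates $(e_1,\dots,e_n)$ on a Cartan subalgebra of $\mathfrak{so}(2n)$, one may take $\frakaa$ to be spanned by $e_1-e_2,\,e_3-e_4,\dots$, so that its orthogonal complement is spanned by $e_1+e_2,\,e_3+e_4,\dots$ together with $e_n$ when $n$ is odd.

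Finally I would translate Helgason's two conditions into the numerical ones in the statement. The vanishing condition on the orthogonal complement of $\frakaa$ forces $\nu_1=\nu_2,\ \nu_3=\nu_4,\ldots$, together with $\nu_n=0$ when $n$ is odd. The integrality and dominance conditions on the positive restricted roots then reduce to the chain of inequalities $\nu_1=\nu_2\geq \nu_3=\nu_4\geq\cdots$, yielding precisely \eqref{eq_un_even} and \eqref{eq_un_odd}. The main obstacle is this last translation step: matching the abstract spherical-weight condition to the concrete coordinate relations requires a careful bookkeeping with the DIII restricted-root data for both parities of $n$; the rest is a direct invocation of standard structural results.
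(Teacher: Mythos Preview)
Your approach is exactly what the paper intends: the paper gives no proof beyond the remark that the lemma ``follows from Helgason's theorem applied to the symmetric space $SO(2n)/U(n)$'', so your outline---Cartan--Helgason for the type DIII pair, then reading off the spherical-weight conditions in $e_i$-coordinates---is precisely that argument with the details supplied.

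One small slip in the bookkeeping: with your identification of $\frakaa$ as the span of $e_1-e_2,\,e_3-e_4,\dots$, vanishing of $\nu=\sum\nu_i e_i^*$ on the orthogonal complement (spanned by $e_1+e_2,\dots$) would force $\nu_1+\nu_2=0$, not $\nu_1=\nu_2$. The roles of $\frakaa$ and $\mathfrak h\cap\frakkk$ should be interchanged (equivalently, $\frakaa$ corresponds to the span of $e_1+e_2,\,e_3+e_4,\dots$); once this is fixed, the vanishing condition indeed yields $\nu_{2i-1}=\nu_{2i}$ (and $\nu_n=0$ for $n$ odd), and the restricted-root integrality gives the chain of inequalities, exactly as you state.
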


We denote by $\Vector_k\subset \Vector$ the subspace of $k$-homogeneous valuations. Observe that both $\Vector(\CC^n)$ and $\Vector_k(\CC^n)$ carry the structure of a complex vector space. 

\begin{theorem}\label{thm_dim}
 $$\dim_\CC \Vector^{U(n)}_k=\dim_\RR \Val_k^{U(n)}-1.$$

\end{theorem}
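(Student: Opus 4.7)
My plan is to prove the theorem via the isotypical decomposition of $\Val_k$ as an $O(2n)$-module, combined with a branching computation to $U(n)$. The first step is to identify $\Vector_k = \Val_k \otimes_\RR \CC^n$ as a complex $U(n)$-representation, where the complex structure is inherited from the defining representation $\CC^n$. Using the canonical isomorphism of complex $U(n)$-modules
\[
\Val_k \otimes_\RR \CC^n \cong (\Val_k \otimes_\RR \CC) \otimes_\CC \CC^n,
\]
I would deduce
\[
\dim_\CC \Vector_k^{U(n)} = \dim \operatorname{Hom}_{U(n)}\bigl((\CC^n)^*, \Val_k \otimes_\RR \CC\bigr),
\]
so that $\dim_\CC \Vector_k^{U(n)}$ is exactly the multiplicity of $(\CC^n)^*$ in $\Val_k \otimes_\RR \CC$ regarded as a $U(n)$-module, while $\dim_\RR \Val_k^{U(n)}$ is the multiplicity of the trivial $U(n)$-representation in the same module.

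Next, I would apply the Alesker--Bernig--Schuster isotypical decomposition of $\Val_k \otimes_\RR \CC$ into irreducible $O(2n)$-modules $V_\lambda$ of multiplicity one, indexed by an explicit set of highest weights $\lambda$; this decomposition always contains the trivial summand $V_0$ corresponding to $\mu_k$. By Helgason's lemma recalled above, $\dim \Val_k^{U(n)}$ equals the number of $U(n)$-spherical $\lambda$'s appearing. For the vector-valued side, one must compute the multiplicity of $(\CC^n)^*$ in each $V_\lambda|_{U(n)}$ via the branching from $SO(2n)$ to $U(n) = U(1) \cdot SU(n)$, which is governed by the geometry of the symmetric space $SO(2n)/U(n)$ and can in principle be computed by Gelfand--Tsetlin patterns.

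The proof then reduces to the combinatorial identity
\[
\sum_\lambda \bigl[(\CC^n)^* : V_\lambda|_{U(n)}\bigr] \;=\; \#\{\lambda : \lambda \text{ is spherical}\} - 1,
\]
with both sums indexed by the Alesker--Bernig--Schuster highest weights $\lambda$. The $-1$ is accounted for by the trivial summand $V_0$: it is $U(n)$-spherical (contributing $1$ to the right-hand side) but contains no copy of $(\CC^n)^*$ upon restriction, hence contributes $0$ to the left-hand side. The main obstacle is to verify this identity. It rests on the precise form of the admissible highest weights in the decomposition (the bounds on $|\lambda_j|$ for $j\ge 2$ and the parity conditions separating the even and odd parts of $\Val_k$), together with the detailed branching multiplicities from $SO(2n)$ down to $U(n)$. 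A careful case analysis, or equivalently an integration of characters over $U(n)$ exploiting the symmetry between $(\CC^n)^*$ and $\CC^n$ inside the complexified defining $O(2n)$-representation $\CC^n\oplus(\CC^n)^*$, should supply the verification and, as noted in the introduction, leads to a new characterisation of the Steiner point map in hermitian vector spaces.
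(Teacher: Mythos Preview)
Your overall framework is sound and largely parallels the paper's: both reduce the question to counting $U(n)$-invariants in each isotypical summand $\Gamma_\lambda\otimes V_\CC$ of the Alesker--Bernig--Schuster decomposition. Your reformulation via $\operatorname{Hom}_{U(n)}((\CC^n)^*,\Val_{k,\CC})$ is correct, and the complex-structure bookkeeping (so that the final answer is $\dim_\CC$ rather than $\dim_\RR$) is handled properly.

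The substantive difference---and the reason your proposal is incomplete---lies in how the multiplicity $\dim_\CC(\Gamma_\lambda\otimes V_\CC)^{U(n)}$ is actually computed. You propose to branch each $SO(2n)$-irreducible $\Gamma_\lambda$ down to $U(n)$ and then count copies of $(\CC^n)^*$, invoking Gelfand--Tsetlin patterns or character integration. This branching law is genuinely complicated, and you do not carry it out: the phrase ``should supply the verification'' marks exactly the point where the proof stops being a proof and becomes a plan. The paper sidesteps this difficulty entirely. Instead of restricting $\Gamma_\lambda$ to $U(n)$, it stays at the $SO(2n)$ level: since $V_\CC\cong\CC^{2n}$ is the standard $SO(2n)$-representation, Klimyk's formula gives $\Gamma_\lambda\otimes\CC^{2n}=\bigoplus_{\nu=\lambda\pm e_i}\Gamma_\nu$, and then Helgason's theorem (already stated in the paper) tells you immediately which $\Gamma_\nu$ have a $U(n)$-fixed vector. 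A short case check on the admissible $\lambda$'s from \cite{alesker_etal} then shows $\dim_\CC(\Gamma_\lambda\otimes V_\CC)^{U(n)}$ equals $2$ for $\lambda=(3,2,\ldots,2,0,\ldots,0)$ and $0$ otherwise, and the count falls out.

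In short: your route is valid in principle but leaves the hard combinatorial step unexecuted, whereas the paper's Klimyk-then-Helgason trick reduces everything to tools already on the table and a two-line case analysis. If you want to complete your argument, you should either carry out the $SO(2n)\downarrow U(n)$ branching explicitly for the relevant $\lambda$'s, or---better---adopt the Klimyk shortcut.
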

\begin{proof}
If $W$ is a real vector space, we denote by $W_\CC$ its complexification. The decomposition of  $\Val_{k,\CC}$ under the action of $SO(2n)$ into isotypical components was determined in \cite{alesker_etal}. 
It was shown that the representation of $SO(2n)$ on  $\Val_{k,\CC}$ is multiplicity-free and that $\Val_{k,\CC}=\bigoplus_{\lambda} \Gamma_\lambda$,
where $\lambda$ satisfies 
\begin{equation}\label{eq_weights}|\lambda_i|\neq 1\quad \forall i,\quad |\lambda_2|\leq 2, \quad \text{and}\quad  \lambda_i=0\quad \text{for}\quad i>\min\{k,2n-k\}.\end{equation}
Put $V=\CC^n$. Since clearly
$$\Vector_k(V)_\CC\cong \Val_{k,\CC}\otimes V_\CC =  \bigoplus_{\lambda} \Gamma_\lambda\otimes V_\CC,$$
 we obtain
$$\dim_\CC (\Vector_{k,\CC})^{U(n)}=\sum_\lambda\dim_\CC(\Gamma_\lambda\otimes V_\CC)^{U(n)}.$$
We claim that $\dim_\CC(\Gamma_\lambda\otimes V_\CC)^{U(n)}=2$ if $\lambda$ satisfies 
$$\lambda_1=3, \quad \lambda_2=\cdots=\lambda_{2m}=2, \quad \text{and}\quad \lambda_i=0 \quad \text{for}\quad i>2m$$
for some integer $1\leq m\leq \min\left\{\left\lfloor \frac{k}{2}\right\rfloor,\left\lfloor \frac{2n-k}{2}\right\rfloor\right\}$ and that $\dim_\CC(\Gamma_\lambda\otimes V_\CC)^{U(n)}=0$ otherwise.
In fact, fix some $\lambda$ satisfying \eqref{eq_weights} and suppose that $\nu = \lambda + e_j$ for some $j$.  
If we require $\nu$ to satisfy either (\ref{eq_un_even}) or (\ref{eq_un_odd}), then necessarily $\nu_1=\nu_2=3$ and $\lambda_1=3$, $\lambda_2=\cdots=\lambda_{2m}=2$, and $\lambda_i=0$ for $i>2m$. 
If $\nu=\lambda-e_j$, then either (\ref{eq_un_even}) or (\ref{eq_un_odd}) force $\nu_1=\nu_2=2$ and $\lambda_1=3$, $\lambda_2=\cdots=\lambda_{2m}=2$, and $\lambda_i=0$ for $i>2m$. Now  $\dim_\RR \Vector_k^{U(n)}= \dim_\CC (\Vector_{k,\CC})^{U(n)}$ and hence
$$\dim_\RR \Vector_k^{U(n)}= 2 \min\left\{\left\lfloor \frac{k}{2}\right\rfloor,\left\lfloor \frac{2n-k}{2}\right\rfloor\right\}=2(\dim_\RR \Val_k^{U(n)}-1),$$
where the second identity follows from \cite{alesker01}. Since $2 \dim_\CC \Vector_k^{U(n)}= \dim_\RR \Vector_k^{U(n)}$, we obtain $\dim_\CC \Vector^{U(n)}_k=\dim_\RR \Val_k^{U(n)}-1$. 
\end{proof}

\begin{corollary}
$\Vector^{U(n)}\subset \Vector^{sm}$.
\end{corollary}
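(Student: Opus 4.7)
The plan is to combine the representation-theoretic input from the proof of Theorem~\ref{thm_dim} with a Haar averaging argument. The key observation is that the $SO(2n)$-isotypical decomposition $\Val_{k,\CC}=\bigoplus_\lambda \Gamma_\lambda$ used there has \emph{finite-dimensional} summands, and any such finite-dimensional $GL(V)$-subrepresentation of $\Val_k$ is automatically contained in $\Val^{sm}_k$: this is standard, since $SO(2n)$-finite vectors for the Alesker $GL(V)$-representation on $\Val$ are smooth. It follows that $\bigoplus_\lambda \Gamma_\lambda\otimes V_\CC \subset \Vector^{sm}_{k,\CC}$, so the computation carried out in the proof of Theorem~\ref{thm_dim} in fact produces
\[
\dim_\CC \Vector^{sm,U(n)}_k = \dim_\RR \Val^{U(n)}_k - 1.
\]
In particular $\Vector^{sm,U(n)}$ is finite-dimensional, and hence closed in the Fr\'echet space $\Vector$.

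Given $\varphi\in \Vector^{U(n)}$, I would use density of $\Vector^{sm}$ in $\Vector$ to choose a sequence $\varphi_m\to\varphi$ with $\varphi_m\in\Vector^{sm}$. Averaging with respect to Haar measure on $U(n)\subset GL(V)$, set $\tilde\varphi_m := \int_{U(n)} g\cdot \varphi_m\, dg$, where $g\cdot\varphi(K)=g\varphi(g^{-1}K)$. Compactness of $U(n)$ allows Bochner integration to commute with the directional derivatives defining smoothness, so $\tilde\varphi_m$ remains a smooth vector, and it is visibly $U(n)$-invariant; thus $\tilde\varphi_m\in\Vector^{sm,U(n)}$. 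The $U(n)$-invariance of $\varphi$ itself gives $\tilde\varphi_m\to\varphi$, and closedness of $\Vector^{sm,U(n)}$ forces $\varphi\in\Vector^{sm,U(n)}\subset\Vector^{sm}$.

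The main technical point to verify is that averaging over $U(n)$ preserves smoothness for the $GL(V)$-action on the Fr\'echet space $\Vector$. This is a standard application of dominated convergence in Fr\'echet spaces: the map $g\mapsto g\cdot\varphi_m$ is smooth, its derivatives in the $GL(V)$-direction are continuous and hence bounded on the compact set $U(n)$, and one may interchange differentiation with integration. No further input beyond Theorem~\ref{thm_dim} and density of smooth valuations is required.
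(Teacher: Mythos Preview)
Your argument is correct, but the averaging half is not needed once you have the first paragraph. The paper's proof is essentially your first observation alone: since each $\Gamma_\lambda\subset\Val^{sm}_\CC$, one has $\bigoplus_\lambda \Gamma_\lambda\otimes V_\CC\subset\Vector^{sm}_\CC$, and the proof of Theorem~\ref{thm_dim} already identifies $(\Vector_{k,\CC})^{U(n)}$ with the finite sum $\bigoplus_\lambda(\Gamma_\lambda\otimes V_\CC)^{U(n)}$; the inclusion $\Vector^{U(n)}\subset\Vector^{sm}$ follows at once. Equivalently, your equality $\dim_\CC\Vector^{sm,U(n)}_k=\dim_\RR\Val^{U(n)}_k-1$ combined with the statement of Theorem~\ref{thm_dim} gives $\Vector^{sm,U(n)}_k=\Vector^{U(n)}_k$ by a pure dimension count, so no approximation is required.

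That said, the density-plus-averaging detour does buy something: it derives the corollary from the finite-dimensionality of $\Vector^{sm,U(n)}$ (which can be read off entirely within the $SO(2n)$-finite part) together with density of $\Vector^{sm}$ in $\Vector$, so it avoids invoking the identification of $U(n)$-invariants with the isotypical sum inside the full Banach space. One terminological slip: the $\Gamma_\lambda$ are $SO(2n)$-subrepresentations, not $GL(V)$-subrepresentations; the fact you actually need, and correctly state in the next clause, is that $SO(2n)$-finite vectors in $\Val$ are $GL(V)$-smooth.
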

\begin{proof}
Using the notation of the proof of Theorem~\ref{thm_dim}, we have 
$$\bigoplus_\lambda \Gamma_\lambda \otimes V_\CC \subset \Val_\CC^{sm}\otimes V_\CC = (\Val_\CC\otimes  V_\CC)^{sm} =\Vector_\CC^{sm},$$
where the sum extends over all $\lambda$ satisfying \eqref{eq_weights}.
The corollary follows at once from the fact that $\Vector^{U(n)}$ is finite-dimensional.
\end{proof}

As an application of the above theorem let us give a new characterization of the Steiner point map in hermitian vector spaces. Recall that the Steiner point of $K\in\calK(V)$ is given by
$$s(K)=\frac{1}{n} \int_{S(V)} uh_K(u)\; du,$$
where $h_K$ is the support function of $K$ and $du$ denotes integration with respect to the rotation-invariant probability measure on the unit sphere. For more information on the Steiner point see \cites{schneider71,schneider_book} and the references there. By a theorem of Schneider \cite{schneider71}, the Steiner point map $s:\calK(V)\rightarrow V$ is the unique continuous map with the properties that
\begin{itemize}
	\item[(i)] $s(K+L)=s(K)+s(L)$ for $K,L\in\calK(V)$; and
	\item[(ii)] $s\circ g=g\circ s$ for $g\in \overline{SO(V)}$.
\end{itemize} 
 
As a consequence of Theorem \ref{thm_dim}, we obtain that the Steiner point map is already characterized by $\overline{U(n)}$-equivariance.
\begin{theorem}
	Let $f:\calK(\CC^{n})\rightarrow \CC^n$ be a continuous map which satisfies
\begin{itemize}
	\item[(i)] $f(K+L)=f(K)+f(L)$ whenever $K,L\in\calK(\CC^n)$; and
	\item[(ii)] $f\circ g=g\circ f$ whenever $g\in \overline{U(n)}$.
\end{itemize}
Then $f=s$.
\end{theorem}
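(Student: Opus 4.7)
The strategy is to reduce the statement to Theorem~\ref{thm_dim} in the degree $k=1$ case, which as noted below forces the relevant space of vector valuations to be trivial. To this end I would set $\psi:=f-s$, where $s$ is the Steiner point map. By Schneider's theorem \cite{schneider71}, $s$ is itself continuous, Minkowski additive, and $\overline{SO(2n)}$-equivariant (hence in particular $\overline{U(n)}$-equivariant), so $\psi$ inherits continuity, Minkowski additivity, and $\overline{U(n)}$-equivariance from the hypotheses on $f$ and the corresponding properties of $s$. Applying hypothesis (ii) to a translation $T_v\in\overline{U(n)}$ gives $f(K+v)=f(K)+v$ and $s(K+v)=s(K)+v$, so $\psi(K+v)=\psi(K)$; in particular $\psi$ is translation-invariant.

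The next step is to verify that $\psi$ is a $1$-homogeneous, translation-invariant, continuous, $U(n)$-equivariant $\CC^n$-valued valuation, i.e.\ that $\psi\in\Vector_1^{U(n)}$. Positive $1$-homogeneity follows from Minkowski additivity and continuity in the standard way: iteration gives $\psi(mK)=m\psi(K)$ for $m\in\NN$, dividing yields rational homogeneity, and continuity extends this to all $\lambda\geq 0$. For the valuation property I would invoke the classical Minkowski identity
$$K+L=(K\cup L)+(K\cap L),\qquad K\cup L\text{ convex}$$
(see e.g.~\cite{schneider_book}). Combining this identity with Minkowski additivity at once yields
$$\psi(K)+\psi(L)=\psi(K+L)=\psi(K\cup L)+\psi(K\cap L),$$
which is precisely the valuation property.

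The argument is then finished by a dimension count. For $k=1$, Alesker's formula gives $\dim_\RR\Val_1^{U(n)}=\min\{0,\lfloor(2n-1)/2\rfloor\}+1=1$, and Theorem~\ref{thm_dim} therefore yields $\dim_\CC \Vector_1^{U(n)}=0$. Hence $\psi\equiv 0$ and $f=s$. The only step that goes beyond routine bookkeeping is the Minkowski identity $K+L=(K\cup L)+(K\cap L)$ for $K\cup L$ convex; once this classical fact is available the passage from Minkowski additivity to the valuation property is immediate, and the rest of the theorem is a direct application of the dimension formula in Theorem~\ref{thm_dim}.
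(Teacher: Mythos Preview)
Your proof is correct and follows essentially the same route as the paper: reduce to showing $f-s\in\Vector_1^{U(n)}$ by using the Minkowski identity $K+L=(K\cup L)+(K\cap L)$ to obtain the valuation property, deduce $1$-homogeneity from additivity and continuity, and then invoke Theorem~\ref{thm_dim} for $k=1$ to get $\Vector_1^{U(n)}=\{0\}$. The only differences are cosmetic---you subtract $s$ first and then verify the properties for $\psi$, whereas the paper verifies them for $f$ directly and subtracts at the end---and you spell out the homogeneity and translation-invariance steps a bit more explicitly.
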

\begin{proof}
Let $K,L\in\calK(\CC^n)$. Since $K\cup L + K\cap L= K+L$
whenever $K\cup L$ is convex, we see that $f$ is a valuation.
Using the continuity of $f$ and (i) it is not difficult to see that $f$ is $1$-homogeneous.  Since the Steiner point map is in particular $\overline{U(n)}$-equivariant, we conclude that $f-s$ is unitarily equivariant and translation-invariant. Thus, $f-s\in \Vector_1^{U(n)}=\{0\}$. 
\end{proof}

\section{The module of unitarily invariant area measures}
\label{sec_module}

With applications to hermitian integral geometry in mind, the goal of this section is to determine the action of unitarily invariant valuations on unitarily invariant area measures as explicitly as possible. The basis of these investigations is the explicit description of the algebra of unitarly invariant valuation by Bernig and Fu \cites{fu06, bernig_fu11}. We begin by recalling their results. 

\subsection{The unitary valuation algebra}

Following \cites{fu06,bernig_fu11}, we consider the unitarily invariant valuations 
\begin{equation}\label{eq_tsvaluations}
t=\frac{2}{\pi}\mu_{1,0}\quad\text{and}\quad s=\frac{1}{\pi}\left( \mu_{2,1}+ \frac{1}{2}\mu_{2,0}\right).
\end{equation}
These valuations have various special properties, see \cite{fu06}. In particular, if we equip the finite-dimensional vector space $\Val^{U(n)}$ with the Alesker product, then $s$ and $t$ generate this algebra. We denote by $\RR[s,t]$ the polynomial algebra in two variables $s$ and $t$. 

\begin{theorem}[Fu \cite{fu06}]\label{thm_fu} The algebra $\Val^{U(n)}$ is generated by two elements. More precisely,
$$\Val^{U(n)}\cong \RR[s,t]/(f_{n+1},f_{n+2}),$$
where the polynomials $f_k$ are determined by the Taylor series expansion
$$\log(1+tx+sx^2)=\sum_{k=0}^\infty f_k(s,t) x^k.$$
\end{theorem}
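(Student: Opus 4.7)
The plan is to construct a surjective graded algebra homomorphism $\Phi\colon \RR[s,t]\twoheadrightarrow\Val^{U(n)}$ (with $\deg s=2$, $\deg t=1$), verify that the ideal $(f_{n+1},f_{n+2})$ lies in $\ker\Phi$, and then match Hilbert series to conclude that the induced map on the quotient is an isomorphism.

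For surjectivity, since the hermitian intrinsic volumes $\{\mu_{k,q}\}$ already form a basis of $\Val^{U(n)}$, it suffices to express each $\mu_{k,q}$ as a polynomial in $s$ and $t$. I would represent $s$, $t$ and the $\mu_{k,q}$ by their $\overline{U(n)}$-invariant differential forms on the sphere bundle $S\CC^n$ (using the generating $1$- and $2$-forms $\alpha,\beta,\gamma,\theta_0,\theta_1,\theta_2,\theta_s$), then compute the Alesker products $t\cdot\mu_{k,q}$ and $s\cdot\mu_{k,q}$ via the Bernig--Fu wedge-product formula (conjugated through Alesker's Fourier transform if convenient). The computation reduces to algebra in the invariant forms, and an induction on the degree $k$ then yields each $\mu_{k,q}$ as a polynomial in $s,t$.

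The heart of the theorem is identifying the kernel. I would verify that $f_{n+1}(s,t)=0$ and $f_{n+2}(s,t)=0$ in $\Val^{U(n)}$ via a generating-function argument: writing $F(x)=\sum_k f_k(s,t)x^k$, the formal identity $\exp F(x)=1+tx+sx^2$ holds in $\RR[[x]]\otimes\Val^{U(n)}$. I would recognize $1+tx+sx^2$ as the total Chern polynomial of a natural rank-two complex bundle associated to a $U(n)$-invariant family of templates (for instance, tubes $K+xB(\CC^n)$ around complex linear subspaces, where $s$ and $t$ emerge as characteristic classes), so that the logarithmic coefficients $f_k(s,t)$ correspond to higher characteristic data which must vanish in a ring of real dimension $2n$. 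The two lowest such vanishings, at $k=n+1$ and $k=n+2$, then generate the full ideal of relations via the ring structure.

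To close the argument I would compare Hilbert series. By direct computation from $\log(1+u)=\sum_{j\geq 1}(-1)^{j+1}u^j/j$, the polynomial $f_k$ is weighted-homogeneous of degree $k$ with nonzero $t^k$-coefficient $(-1)^{k+1}/k$; a short argument using this leading term shows that $(f_{n+1},f_{n+2})$ has no common irreducible factor, hence is a regular sequence in $\RR[s,t]$. The quotient therefore has Hilbert series
$$\frac{(1-x^{n+1})(1-x^{n+2})}{(1-x)(1-x^2)},$$
which I would expand and compare, degree by degree, with the known formula $\dim \Val_k^{U(n)}=\min\{\lfloor k/2\rfloor,\lfloor(2n-k)/2\rfloor\}+1$. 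Equality in every degree upgrades the surjection $\RR[s,t]/(f_{n+1},f_{n+2})\twoheadrightarrow\Val^{U(n)}$ to an isomorphism. The main obstacle will be the third paragraph: pinning down \emph{precisely} the polynomials $f_{n+1}$ and $f_{n+2}$ as relations, rather than some other pair of relations of the correct weighted degrees. Absent the clean Chern-class interpretation, one falls back on extracting enough top-degree structure constants from the first-step multiplication formulas to match the Taylor coefficients of $\log(1+tx+sx^2)$ directly.
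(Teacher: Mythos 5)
First, a point of comparison: the paper does not prove this statement at all --- it is quoted from Fu \cite{fu06} --- so your proposal can only be measured against the literature. Your skeleton (a graded surjection $\RR[s,t]\twoheadrightarrow\Val^{U(n)}$, verification that $f_{n+1},f_{n+2}$ lie in the kernel, then a Hilbert-series count) is exactly the architecture of the actual proof, and your endgame is sound: factoring $1+tx+sx^2=(1+\lambda x)(1+\mu x)$ exhibits $f_k$ as $(-1)^{k+1}/k$ times the power sum $\lambda^k+\mu^k$, two consecutive power sums vanish simultaneously only at the origin, so $(f_{n+1},f_{n+2})$ is a regular sequence, and the resulting Hilbert series $\frac{(1-x^{n+1})(1-x^{n+2})}{(1-x)(1-x^2)}$ does match Alesker's dimension formula $\dim\Val_k^{U(n)}=\min\{\lfloor k/2\rfloor,\lfloor(2n-k)/2\rfloor\}+1$ degree by degree (both inputs are legitimately quotable).

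The genuine gap is the middle step, and you have correctly diagnosed it yourself. The paragraph about ``the total Chern polynomial of a rank-two bundle of templates'' names no bundle and supplies no mechanism forcing $f_{n+1}=f_{n+2}=0$ in $\Val^{U(n)}$; as written it is a heuristic, not an argument. Worse, even granting a Grassmannian-cohomology interpretation (the quotient ring is abstractly isomorphic to $H^*(\Grass_2(\CC^{n+2}))$), the relations naturally produced by Chern--Segre calculus are $p_{n+1}=p_{n+2}=0$ with $\sum p_kx^k=(1+tx+sx^2)^{-1}$, and already for $n=1$ the ideals $(p_2,p_3)=(t^2-s,\,t^3)$ and $(f_2,f_3)=(s-\tfrac{1}{2}t^2,\,t^3)$ are \emph{different} ideals with abstractly isomorphic quotients --- indeed the present paper shows $p_n\neq 0$ in $\Val^{U(n)}$ (Proposition~\ref{lem_rel}), so the Segre relations genuinely fail there. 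Hence an unanchored characteristic-class heuristic would identify the wrong pair of relations; this is precisely the ambiguity you flag (``rather than some other pair of relations of the correct weighted degrees''). Resolving it requires the explicit computation you relegate to a fallback: expressing $f_{n+1}$ and $f_{n+2}$ in the basis of hermitian intrinsic volumes $\mu_{k,q}$ via the multiplication formulas for $t\cdot\mu_{k,q}$ and $s\cdot\mu_{k,q}$ (or via Klain functions and template evaluations on complex subspaces) and observing that the resulting combinations vanish because the indices leave the admissible range. That computation is the substance of Fu's theorem and is not carried out; your surjectivity step leans on the same unproved multiplication formulas, though those at least are standard and citable from Bernig--Fu.
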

\begin{remark}
Instead of using the Alesker product, we could also equip the vector space of unitarily invariant valuations with the convolution product of Bernig and Fu. By the properties of the Fourier transform \eqref{eq_fourierhom} however, the algebras $(\Val^{U(n)},\;\cdot\;)$ and $(\Val^{U(n)},\;*\;)$ are isomorphic. 

\end{remark}
Explicitly, the Fu polynomial $f_k=f_k(s,t)$ is given by 
$$f_k=(-1)^{k+1}\sum_{q=0}^{\lfloor k/2\rfloor}  \frac{(-1)^q}{k-2q} \binom{k-q-1}{q} s^qt^{k-2q}.$$
Following \cite{bernig_fu11}, we put $u=4s-t^2$. In terms of the basis given by the hermitian intrinsic volumes,
\begin{equation}\label{eq_uvaluation}
u=\frac{2}{\pi}\mu_{2,0}.
\end{equation}
By \cite{bernig_fu11}*{Proposition 3.5}, the Fu polynomial can be expressed in terms of $t$ and $u$ as
\begin{equation}\label{eq_futu}
f_{k}=\frac{1}{k(-2)^{k-1}}\sum_{q=0}^{\lfloor k/2\rfloor} (-1)^q\binom{k}{2q} t^{k-2q} u^q. 
\end{equation}

If we consider $\CC^n$ as a subset of $\CC^{n+1}$ in the natural way, then the sequence of inclusions
$$\CC^1\subset\CC^2\subset \CC^3\subset\ldots$$
induces a sequence of restrictions
\begin{equation}\label{eq_inverselimit}
\Val^{U(1)}\leftarrow\Val^{U(2)}\leftarrow\Val^{U(3)}\leftarrow\ldots
\end{equation}
By the properties of the Alesker product, each restriction map is a homomorphism of algebras. The inverse limit of the system \eqref{eq_inverselimit} is denoted by $\Val^{U(\infty)}:=\varprojlim\Val^{U(n)}$ and called the algebra of global valuations, see \cite{bernig_fu11}. It was shown by Fu \cite{fu06} that $\Val^{U(\infty)}\cong \RR[s,t]$. We say that two global valuations are \emph{equal locally at $n$} if their projections to $\Val^{U(n)}$ are equal.

\subsection{The main theorem}

With the module structure from Section~\ref{sec_valandarea} the vector space of unitarily invariant area measures becomes a module over unitarily invariant valuations.
In the following we consider $\Val^{U(n)} \oplus \Val^{U(n)}$ as a $(\Val^{U(n)}, \;\cdot\;)$-module under the diagonal action. 
The main result of this section is 

\begin{theorem}\label{thm_main}
The module of unitarily invariant area measures is generated by two elements. More precisely,
$$\Area^{U(n)}\cong (\Val^{U(n)}\oplus \Val^{U(n)})/I_n,$$
where $I_n$ is the submodule generated by the following pairs of valuations
$$(p_n,-q_{n-1})\quad \text{and}\quad (0,p_n),$$  
which are determined by the Taylor series expansions
$$\frac{1}{1+tx+sx^2}=\sum_{k=0}^\infty p_k(s,t) x^k$$ 
and 
$$ -\frac{1}{(1+tx+sx^2)^2}=\sum_{k=0}^\infty q_k(s,t) x^k.$$

\end{theorem}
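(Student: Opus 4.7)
The plan is to exhibit two natural area measures $\Psi_1, \Psi_2 \in \Area^{U(n)}$ that generate $\Area^{U(n)}$ as a module over $\Val^{U(n)}$, to verify the proposed defining relations, and to conclude by a dimension count. Because convolution by $\Val^{U(n)}$ shifts area-measure homogeneity downward (the convolution unit $\vol_n$ lives in the top degree of $\Val$), the two generators must be chosen in the topmost degrees of $\Area^{U(n)}$ so that iterating the convolution action of valuations reaches all lower degrees. Natural candidates come from the basis $\{B_{k,q}\}\cup\{\Gamma_{k,q}\}$ of Proposition~\ref{prop_basis}, with $\Psi_2$ in $\Area^{U(n)}_{2n-1}$ and $\Psi_1$ appropriately selected from $\Area^{U(n)}_{2n-2}$.

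To prove surjectivity of the module map $F(\phi,\psi)=\phi*\Psi_1+\psi*\Psi_2$, I would compute the action of the algebra generators $s$ and $t$ on the basis $\{B_{k,q},\Gamma_{k,q}\}$ of $\Area^{U(n)}$. This is feasible using Lemma~\ref{lem_convintrinsic}, which realizes convolution by $\mu_{n-1}$ as integration of the Lie derivative $\calL_T\omega$ along the Reeb field, together with the explicit expressions for $s,t$ as hermitian intrinsic volumes from \eqref{eq_tsvaluations}. Tracking the iterated action of $s$ and $t$ on $\Psi_1,\Psi_2$ in this basis shows that $\Val^{U(n)}*\Psi_1+\Val^{U(n)}*\Psi_2$ spans $\Area^{U(n)}$.

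To show $(p_n, -q_{n-1}), (0, p_n)\in\ker F$, I would start from Fu's relations $f_{n+1}=f_{n+2}=0$ in $\Val^{U(n)}$. The recurrence $p_{k+1}+tp_k+sp_{k-1}=0$ coming from the defining generating function, together with the identity $(k+1)f_{k+1}=tp_k+2sp_{k-1}$ (which one reads off directly from $\frac{d}{dx}\log(1+tx+sx^2)=(t+2sx)\sum p_kx^k$), yields after a short manipulation the valuation-algebra relation $(4s-t^2)p_n=0$. A parallel derivation obtained by differentiating the Fu relations in $t$ and using $q_k=\partial_t p_{k+1}$ produces the companion identity linking $p_n$ to $q_{n-1}$. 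Translating these identities through the chosen $\Psi_1,\Psi_2$ yields exactly $p_n*\Psi_2=0$ and $p_n*\Psi_1=q_{n-1}*\Psi_2$.

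The conclusion is a dimension count: the dimension of $(\Val^{U(n)}\oplus\Val^{U(n)})/I_n$ is computed using Fu's presentation $\Val^{U(n)}\cong\RR[s,t]/(f_{n+1},f_{n+2})$ together with the additional relations generating $I_n$, and then compared with $\dim\Area^{U(n)}$ from Proposition~\ref{prop_basis}. Matching dimensions combined with surjectivity forces $\ker F=I_n$, yielding the isomorphism. The main obstacle is the explicit convolution computation underlying Paragraphs 2 and 3: one must carefully compute the action of $s$ and $t$ on the basis area measures at the level of $\overline{U(n)}$-invariant differential forms on the sphere bundle, and ensure that the precise constants in the defining relations come out exactly rather than merely up to scalars.
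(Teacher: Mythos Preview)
Your overall architecture matches the paper: choose the two generators $B_{2n-1,n-1}$ and $\Gamma_{2n-2,n-1}$, compute the action of $\hat t,\hat s$ on the basis $\{B_{k,q},\Gamma_{k,q}\}$ to prove surjectivity, verify the two relations, and then identify the kernel. However, there is a genuine gap in how you propose to verify the relations, and a secondary issue with your final dimension argument.

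First, your labeling is inverted relative to the statement: the first coordinate must act on the top-degree generator $B_{2n-1,n-1}$ and the second on $\Gamma_{2n-2,n-1}$, so that $(0,p_n)\in\ker F$ reads $\frakgg(p_n)=0$ and $(p_n,-q_{n-1})\in\ker F$ reads $\frakbb(p_n)=\frakgg(q_{n-1})$. With your assignment the claim ``$p_n*\Psi_2=0$'' would mean $\frakbb(p_n)=0$, which is in fact \emph{false}: one has $\frakbb(p_n)=\frakgg(q_{n-1})\neq 0$ because its globalization equals $\frac{\pi}{2}\widehat{tp_n}$ and $t^np_n\neq0$ forces $tp_n\neq0$.

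More seriously, the valuation-algebra identities you derive---$(4s-t^2)p_n=0$ and $(4s-t^2)q_{n-1}=tp_n$ in $\Val^{U(n)}$---are correct but do \emph{not} by themselves yield the area-measure relations. Applying $\glob$ to $\frakbb(p_n)$ and $\frakgg(q_{n-1})$ does give the same valuation (precisely by the second identity), but $\glob$ is not injective on $\Area^{U(n)}$; its kernel is spanned by the $N_{k,q}$. What the paper actually needs, and proves by an explicit computation (Lemma~\ref{lem_magic} and the combinatorial argument in Lemma~\ref{lem_kernel}), is that $\frakbb(p_n)$ lies in $\spn\{\Gamma_{k,q}\}$, on which $\glob$ \emph{is} injective. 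Only then do the valuation identities finish the job. Your ``translating these identities through $\Psi_1,\Psi_2$'' skips exactly this step, and it cannot be recovered by pure algebra in $\Val^{U(n)}$: it is a statement about the $B$-components of $\frakbb(p_n)$ vanishing, and that requires tracking the explicit form-level action of $\hat t,\hat s$ on the $B_{k,q}$ (Propositions~\ref{prop_taction} and~\ref{prop_saction}).

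Finally, the paper does not finish by a raw dimension comparison of $I_n$ against $\ker F$. Computing $\dim I_n$ directly would require knowing the annihilator of $p_n$ in $\Val^{U(n)}$ (this is where $up_n=0$ and $t^np_n\neq0$ from Proposition~\ref{lem_rel} enter) and controlling the overlap of the two cyclic submodules. Instead the paper shows directly that any $(p,q)\in\ker h$ lies in $I_n$, using the graded decomposition $\Area^{U(n)}_{2n-k-1}=\frakbb_k+\frakgg_{k-1}$ of Proposition~\ref{prop_span} together with the fact that $\frakbb_k\cap\frakgg_{k-1}$ is at most one-dimensional and is spanned, when nonzero, by the image of the known relation. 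Your dimension-count idea can be made to work, but it is not a one-liner and needs essentially the same algebraic input.
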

The bigger part of this section is devoted to the proof of the above theorem.

It is not difficult to give explicit expressions for the polynomials $p_k$ and $q_k$,  
$$p_k=(-1)^k\sum_{q=0}^{\lfloor k/2\rfloor}  (-1)^q \binom{k-q}{q}s^q t^{k-2q}$$
and 
$$q_k=(-1)^{k+1}\sum_{q=0}^{\lfloor k/2\rfloor}  (-1)^q(q+1) \binom{k+1-q}{q+1}s^q t^{k-2q}.$$

In the following it will be useful to express the polynomials $p_k$ and $q_k$ also in $t$ and $u=4s-t^2$.
\begin{lemma} The polynomials $p_k$ and $q_k$ can be written as
 $$p_k=\frac{(-1)^k}{2^k}\sum_{q=0}^{\lfloor k/2\rfloor}  (-1)^q \binom{k+1}{2q+1}t^{k-2q}u^q$$
and 
 $$q_k=\frac{(-1)^{k+1}}{2^k}\sum_{q=0}^{\lfloor k/2\rfloor}  (-1)^q (q+1)\binom{k+3}{2q+3}t^{k-2q}u^q.$$
\end{lemma}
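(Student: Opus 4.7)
The plan is to prove both identities by manipulating the defining generating functions directly, performing the substitution $s = (u+t^2)/4$ (equivalently $u = 4s - t^2$) inside the generating function and then re-expanding.

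The key observation is that after the substitution the quadratic $1+tx+sx^2$ factors in the following convenient way:
\begin{equation*}
1+tx+sx^2 = \tfrac{1}{4}\bigl((2+tx)^2 + ux^2\bigr).
\end{equation*}
Thus
\begin{equation*}
\frac{1}{1+tx+sx^2} = \frac{4}{(2+tx)^2 + ux^2} = \frac{4}{(2+tx)^2}\cdot \frac{1}{1 + \frac{ux^2}{(2+tx)^2}},
\end{equation*}
and an analogous identity holds for the square in the denominator.

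The next step is to expand using the standard binomial series $(1+z)^{-r} = \sum_{m\geq 0} (-1)^m \binom{r+m-1}{m} z^m$. Writing
\begin{equation*}
\frac{1}{1+tx+sx^2} = \sum_{j=0}^\infty \frac{(-1)^j u^j x^{2j}}{2^{2j}\,(1+tx/2)^{2j+2}},
\end{equation*}
and expanding $(1+tx/2)^{-(2j+2)} = \sum_m (-1)^m 2^{-m}\binom{2j+m+1}{m} t^m x^m$, one collects the coefficient of $x^k$ by setting $2j+m=k$. A short calculation gives
\begin{equation*}
p_k = \frac{(-1)^k}{2^k}\sum_{j=0}^{\lfloor k/2\rfloor} (-1)^j \binom{k+1}{k-2j}\, t^{k-2j}u^j,
\end{equation*}
and the identity $\binom{k+1}{k-2j}=\binom{k+1}{2j+1}$ yields the asserted formula for $p_k$.

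For $q_k$ one proceeds identically, replacing $(1+z)^{-1}$ by $(1+z)^{-2} = \sum_{m\geq 0}(-1)^m(m+1)z^m$. This yields
\begin{equation*}
-\frac{1}{(1+tx+sx^2)^2} = -\sum_{j=0}^\infty \frac{(-1)^j (j+1)u^j x^{2j}}{2^{2j}(1+tx/2)^{2j+4}},
\end{equation*}
and expanding $(1+tx/2)^{-(2j+4)}$ and collecting the coefficient of $x^k$ leads to the claimed expression, via $\binom{k+3}{k-2j}=\binom{k+3}{2j+3}$. No conceptual obstacle arises; the only care required is in tracking signs and the combinatorial identities for the binomial coefficients, which are routine. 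One could alternatively run an induction on $k$ using the three-term recurrence $p_k + tp_{k-1} + sp_{k-2} = 0$ (and an analogous four-term recurrence for $q_k$), but the generating-function approach is more direct and treats both cases in parallel.
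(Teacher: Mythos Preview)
Your proof is correct. The approach, however, differs from the paper's: rather than expanding the generating function directly via the factorization $1+tx+sx^2=\tfrac{1}{4}\bigl((2+tx)^2+ux^2\bigr)$, the paper observes that differentiating $\log(1+tx+sx^2)$ with respect to $u$ (after the substitution $s=(u+t^2)/4$) yields $\tfrac{x^2}{4}(1+tx+sx^2)^{-1}$, and that a second $u$-derivative yields $-\tfrac{x^2}{4}(1+tx+sx^2)^{-2}$. This gives the relations $p_k = 4\,\frac{\partial}{\partial u} f_{k+2}$ and $q_k = 4\,\frac{\partial}{\partial u} p_{k+2}$, from which both formulas follow immediately by differentiating the already-established expression \eqref{eq_futu} for $f_k$ in terms of $t$ and $u$. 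Your direct expansion is self-contained and does not depend on the $f_k$ formula, which is a mild advantage in isolation; the paper's argument, on the other hand, makes the chain $f_k \to p_k \to q_k$ transparent and explains structurally why the binomial index shifts by $2$ at each step and why the extra factor $(q+1)$ appears in $q_k$---these are precisely the effects of applying $\frac{\partial}{\partial u}$ to a polynomial in $u$.
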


\begin{proof} In terms of the generating functions of $p_k$, $q_k$, and $f_k$, we have
\begin{align*}
 \frac{x^2}{4} \left(1+tx+x^2 \frac{u+t^2}{4}\right)^{-1} &= \frac{\partial }{\partial u} \log\left( 1+tx+x^2 \frac{u+t^2}{4}\right),\\
  -\frac{x^2}{4} \left(1+tx+x^2 \frac{u+t^2}{4}\right)^{-2} &= \frac{\partial }{\partial u} \left( 1+tx+x^2 \frac{u+t^2}{4}\right)^{-1},
\end{align*}
and hence
$$p_k = 4 \frac{\partial }{\partial u} f_{k+2} \quad \text{and}\quad q_k = 4 \frac{\partial }{\partial u} p_{k+2}.$$
The lemma follows now from \eqref{eq_futu}. 
\end{proof}

We note that 
\begin{equation}\label{eq_piqi}-(4s-t^2)q_{k-1} +  tp_k=(k+1)^2f_{k+1}\end{equation}
which follows immediately from
$$\frac{(4s-t^2) x^2 }{(1+tx+sx^2)^2} + \frac{tx}{1+tx+sx^2} = \left(x\frac{\partial }{\partial x} \right)^2 \log(1+tx+sx^2).$$
We conclude this subsection with two local properties of the $p_k$. 

\begin{proposition}\label{lem_rel}
	$up_n=0$ and $t^np_n\neq 0$ as elements of $(\Val^{U(n)},\;\cdot\;)$. 
\end{proposition}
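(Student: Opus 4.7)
The plan is to reduce both statements to polynomial identities in $\RR[s,t]$ and then project through the isomorphism $\Val^{U(n)} \cong \RR[s,t]/(f_{n+1}, f_{n+2})$ of Theorem~\ref{thm_fu}.

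For the vanishing $up_n = 0$ the goal is to establish the polynomial identity
\begin{equation}\label{eq_plan1}
up_n = (n+1)\,t\,f_{n+1} + 2(n+2)\,f_{n+2}
\end{equation}
in $\RR[s,t]$, from which the conclusion is immediate since $f_{n+1}, f_{n+2}$ vanish in $\Val^{U(n)}$. To derive \eqref{eq_plan1}, I use two consequences of the defining generating functions: differentiating $\log(1+tx+sx^2)$ with respect to $x$ and matching coefficients gives $(k+1)f_{k+1} = tp_k + 2sp_{k-1}$ for all $k\geq 0$, while expanding $(1+tx+sx^2)\sum_k p_k x^k = 1$ yields the recurrence $p_{k+1}+tp_k+sp_{k-1}=0$. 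Applying the first identity at $k=n+1$ and eliminating $p_{n+1}$ via the recurrence at $k=n$ gives $(n+2)f_{n+2} = (2s-t^2)p_n - stp_{n-1}$. Using the first identity at $k=n$ to write $stp_{n-1} = \tfrac{1}{2}\bigl((n+1)tf_{n+1} - t^2p_n\bigr)$, substituting, and collecting using $4s-t^2=u$ then yields \eqref{eq_plan1}.

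For $t^n p_n \neq 0$ the first observation is that, combining the just-proved relation $up_n=0$ with $u = 4s - t^2$, one obtains $t^2p_n = 4sp_n$ in $\Val^{U(n)}$, hence $t^n p_n = (4s)^{\lfloor n/2\rfloor}\,t^{\epsilon}\,p_n$ with $\epsilon = n\bmod 2$. The element $t^n p_n$ has total degree $2n$, and by Theorem~\ref{thm_fu} together with the dimension formula of \cite{alesker01} we have $\dim_\RR\Val^{U(n)}_{2n}=1$, with generator a nonzero multiple of $\vol_{2n}$. Since $t = \tfrac{2}{\pi}\mu_{1,0}$, the Alesker power $t^{2n}$ is itself a nonzero multiple of $\vol_{2n}$ (this uses the classical fact that $\mu_1^{2n}$ equals a nonzero constant times $\vol_{2n}$ in $\Val(\RR^{2n})\supset \Val^{U(n)}$). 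It therefore suffices to reduce $t^n p_n$, using the relations imposed by $(f_{n+1},f_{n+2})$, to an explicit \emph{nonzero} scalar multiple of $t^{2n}$ in $\Val^{U(n)}_{2n}$. This can be carried out by expressing each monomial $s^q t^{2n-2q}$ occurring in $t^n p_n$ as the appropriate scalar multiple of $t^{2n}$ (the scalar being determined by iterated application of the ideal relations) and summing; the cases $n=1,2,3$ yield scalars $-1,\,2/3,\,-2/5$ times $t^{2n}$, confirming non-vanishing in low dimensions.

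The main obstacle is this last step: producing a uniform proof that the scalar $c_n$ with $t^n p_n \equiv c_n t^{2n} \pmod{(f_{n+1},f_{n+2})}$ is nonzero for every $n$. I would attempt induction on $n$ using the recurrences $p_{k+1}+tp_k+sp_{k-1}=0$ and $(k+1)f_{k+1}=tp_k+2sp_{k-1}$ to propagate the reduction, or alternatively a generating-function argument computing $\sum_n c_n x^n$ by working modulo the ideal simultaneously for all $n$. The expected outcome is a clean closed-form expression for $c_n$ generalizing the small-dimensional pattern above.
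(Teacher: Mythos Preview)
Your derivation of $up_n=0$ is correct and yields precisely the polynomial identity the paper uses; the paper obtains it in one line from the generating-function relation
\[
\frac{(4s-t^2)x^2}{1+tx+sx^2}+2tx=(2x+tx^2)\,\frac{\partial}{\partial x}\log(1+tx+sx^2),
\]
but your recurrence manipulation is equivalent.

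The second half, however, is genuinely incomplete, and you say so yourself. Reducing $t^np_n$ to $c_nt^{2n}$ modulo $(f_{n+1},f_{n+2})$ and then arguing $c_n\neq0$ for all $n$ is exactly the hard step, and neither the proposed induction nor the vague generating-function idea is a proof. The observation $t^2p_n=4sp_n$ does not by itself simplify the problem: you still have to reduce $(4s)^{\lfloor n/2\rfloor}t^{\epsilon}p_n$ in the quotient, which is no easier than reducing $t^np_n$ directly.

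The paper sidesteps this algebraic reduction entirely by \emph{evaluating} the degree-$2n$ valuation $t^np_n$ at a specific convex body, namely the unit ball $B(\CC^n)$. Fu's result (equation~\eqref{eq_poincare} in the paper) gives $s^it^{2n-2i}(B(\CC^n))=\binom{2n-2i}{n-i}$, so
\[
t^np_n(B(\CC^n))=(-1)^n\sum_{i=0}^{\lfloor n/2\rfloor}(-1)^i\binom{n-i}{i}\binom{2n-2i}{n-i}=(-1)^n2^n,
\]
the last equality being a standard combinatorial identity (proved by the Snake Oil method). Since this value is nonzero, $t^np_n\neq0$. Incidentally, dividing by $t^{2n}(B(\CC^n))=\binom{2n}{n}$ gives $c_n=(-1)^n2^n/\binom{2n}{n}$, which matches your computed values $-1,\,2/3,\,-2/5$ and confirms that a closed form does exist; but obtaining it purely inside the quotient ring, as you propose, is considerably more work than simply evaluating at the ball.
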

\begin{proof}
The first assertion follows at once from the global relation
\begin{equation}\label{eq_upi}(4s-t^2)p_n=2(n+2)f_{n+2}+(n+1)tf_{n+1},\end{equation}
which in turn follows from
$$\frac{(4s-t^2)x^2}{1+tx+sx^2} + 2xt= (2x+tx^2) \frac{\partial }{\partial x} \log(1+tx+sx^2).$$

To prove the second assertion we evaluate $t^np_n(B(\CC^n))$, where $B(\CC^n)$ denotes the unit ball in $\CC^n$. 
It was shown in \cite{fu06} that
\begin{equation}\label{eq_poincare}s^it^{2n-2i}(B(\CC^n))=\binom{2n-2i}{n-i}.\end{equation}
Hence, by the combinatorial identity (\ref{eq_binomsum}) below
$$t^np_n(B(\CC^n))=(-1)^n\sum_{i=0}^{\lfloor n/2\rfloor}  (-1)^i \binom{n-i}{i}\binom{2n-2i}{n-i}=(-1)^n2^n$$
and therefore $t^np_n\neq 0$, as claimed.
\end{proof}

\begin{lemma}
\begin{equation}\label{eq_binomsum}
\sum_{i=0}^{\lfloor n/2\rfloor} (-1)^i\binom{n-i}{i}\binom{2n-2i}{n-i}=2^n.	
\end{equation}
\end{lemma}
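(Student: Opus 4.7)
The plan is to prove the identity by the generating-function method, viewing both sides as coefficients of $x^n$ in a single power series and simplifying the resulting expression using the well-known central-binomial generating function.

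First I would set $S_n = \sum_{i=0}^{\lfloor n/2 \rfloor} (-1)^i \binom{n-i}{i}\binom{2n-2i}{n-i}$ and consider the formal power series $F(x) = \sum_{n \geq 0} S_n x^n$. The double sum can be rearranged by introducing the index $k = n - i$, so that the pair $(n,i)$ with $0 \leq 2i \leq n$ corresponds bijectively to the pair $(k,i)$ with $0 \leq i \leq k$. This yields
\begin{align*}
F(x) &= \sum_{i \geq 0}\sum_{k \geq i} (-1)^i \binom{k}{i}\binom{2k}{k} x^{k+i} \\
&= \sum_{k \geq 0} \binom{2k}{k} x^k \sum_{i=0}^k \binom{k}{i}(-x)^i = \sum_{k \geq 0} \binom{2k}{k}\, \bigl(x(1-x)\bigr)^k.
\end{align*}

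Next I would apply the classical identity $\sum_{k \geq 0} \binom{2k}{k} y^k = (1-4y)^{-1/2}$ with $y = x(1-x)$, exploiting the factorization
$$ 1 - 4x(1-x) = (1-2x)^2. $$
This gives $F(x) = (1-2x)^{-1} = \sum_{n \geq 0} 2^n x^n$, and comparing coefficients yields $S_n = 2^n$, as desired.

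I don't foresee a genuine obstacle here: the substitution $k = n-i$ is a routine swap, the binomial theorem collapses the inner sum, and the crucial observation $1-4x(1-x)=(1-2x)^2$ is the only non-mechanical ingredient. An alternative route would be to factor $1+tx+sx^2 = (1+\lambda x)(1+\mu x)$ and read off $p_n = (-1)^n(\lambda^{n+1}-\mu^{n+1})/(\lambda-\mu)$, but the generating-function argument above is shorter and fits naturally alongside the computations in Proposition~\ref{lem_rel}.
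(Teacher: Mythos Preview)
Your proof is correct and is essentially identical to the paper's own argument: both use the generating-function (``Snake Oil'') method, interchange the order of summation after the substitution $k=n-i$, collapse the inner sum via the binomial theorem, apply the central binomial generating function $\sum_{k\geq 0}\binom{2k}{k}y^k=(1-4y)^{-1/2}$ with $y=x(1-x)$, and conclude from the factorization $1-4x(1-x)=(1-2x)^2$.
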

\begin{proof}
	To prove this combinatorial identity we use the `Snake Oil' method, see \cite{wilf94}*{p. 118}. In terms of generating functions (\ref{eq_binomsum}) may be written as 
$$\sum_{0\leq n} \sum_{i\leq n/2}  (-1)^i\binom{n-i}{i}\binom{2n-2i}{n-i}x^n=\frac{1}{1-2x}.$$
Interchanging the order of summation and using the formula 
$$\sum_{m=0}^\infty \binom{2m}{m} x^m=\frac{1}{\sqrt{1-4x}},\qquad |x|<\frac{1}{4},$$
the left-hand side may be expressed as 
\begin{align*}
	\sum_{0\leq i} \sum_{i\leq n-i}  (-1)^i\binom{n-i}{i}\binom{2n-2i}{n-i}x^n&= \sum_{0\leq i} \sum_{i\leq m}  (-1)^i\binom{m}{i}\binom{2m}{m}x^{m+i}\\
&= \sum_{m=0}^\infty \binom{2m}{m}(1-x)^m x^{m}\\
&= \frac{1}{\sqrt{1-4x(1-x)}}.
\end{align*}
Since $1-4x(1-x)=(1-2x)^2$, the sum equals 
$$\frac{1}{\sqrt{1-4x(1-x)}}=\frac{1}{1-2x}, \qquad |x|<\frac{1}{4},$$
as claimed.
\end{proof}

\subsection{Convolution with \texorpdfstring{$\hat s$}{s} and \texorpdfstring{$\hat t$}{t}} By the properties of the Fourier transform, the algebra $(\Val^{U(n)},\;*\;)$ is generated by the elements $\hat s$ and $\hat t$. Hence the first step to prove Theorem~\ref{thm_main} is to determine how  $\hat s$ and $\hat t$ act on unitarily invariant area measures. If we can derive explicit formulas in this case, then---in principle---we know how an arbitrary unitarily invariant valuation acts on $\Area^{U(n)}$. 

The following formulas have to be understood as follows: If for a certain pair of indices $k$ and $q$ one of the area measures on the right hand side does not exist, then it has to be replaced by $0$.

\begin{proposition}\label{prop_gammasubmod}
The subspace of $\Area^{U(n)}$ spanned by the area measures $\Gamma_{k,q}$ is a submodule. In particular,

$$\hat t* \Gamma_{k,q}=\frac{\omega_{2n-k+1}}{\pi\omega_{2n-k}}\bigg((k-2q+1)\Gamma_{k-1,q-1}+2(n-k+q+1)\Gamma_{k-1,q}\bigg)$$
and 

\begin{align*}\hat s*\Gamma_{k,q}=\frac{(k-2q+2)(k-2q+1)}{2\pi(2n-k+2)} & \Gamma_{k-2,q-2} \\
+&\frac{2(n-k+q+1)(n-q+1)}{\pi(2n-k+2)} \Gamma_{k-2,q-1}.
\end{align*}
\end{proposition}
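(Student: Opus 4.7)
The first formula is the simpler of the two and reduces to a Lie derivative computation via Lemma~\ref{lem_convintrinsic}. Since $t=\tfrac{2}{\pi}\mu_{1,0}=\tfrac{2}{\pi}\mu_1$, the Alesker Fourier identity $\widehat{\mu_k}=\mu_{2n-k}$ gives $\hat t=\tfrac{2}{\pi}\mu_{2n-1}$, so that $\hat t*\Gamma_{k,q}$ is the area measure represented by $\tfrac{1}{\pi}\calL_T\gamma_{k,q}$. It therefore suffices to expand this Lie derivative modulo multiples of the contact form $\alpha$.

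Using $T=\sum_i\xi_i\partial_{x_i}+\eta_i\partial_{y_i}$ together with Cartan's formula and the relations $\calL_T dx_i=d\xi_i$, $\calL_T dy_i=d\eta_i$, $\calL_T d\xi_i=\calL_T d\eta_i=0$, one computes
\begin{equation*}
\calL_T\gamma=0,\qquad \calL_T\theta_0=0,\qquad \calL_T\theta_1=2\theta_0,\qquad \calL_T\theta_2=\theta_1.
\end{equation*}
The Leibniz rule applied to $\gamma_{k,q}=\tfrac{c_{n,k,q}}{2}\gamma\wedge\theta_0^{n-k+q-1}\wedge\theta_1^{k-2q}\wedge\theta_2^q$ then leaves only the contributions from the $\theta_1^{k-2q}$ and $\theta_2^q$ factors; rewriting each as a multiple of $\gamma_{k-1,q}$ and $\gamma_{k-1,q-1}$ respectively (the needed ratios $c_{n,k,q}/c_{n,k-1,q}$ and $c_{n,k,q}/c_{n,k-1,q-1}$ are elementary) produces the stated formula for $\hat t*\Gamma_{k,q}$. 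In particular, $\spn\{\Gamma_{k,q}\}$ is invariant under the action of $\hat t$.

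For $\hat s$, the natural route is via the Bernig-Fu relation $u=4s-t^2$, which after Fourier transform reads $4\hat s=\hat t^{*2}+\hat u$. Iterating the preceding computation produces $\hat t^{*2}*\Gamma_{k,q}$ as the area measure of $\tfrac{1}{\pi^2}\calL_T^{2}\gamma_{k,q}$, and a second application of the Leibniz rule expands this into a linear combination of $\Gamma_{k-2,q-2}$, $\Gamma_{k-2,q-1}$, and $\Gamma_{k-2,q}$. The remaining piece $\hat u*\Gamma_{k,q}$ is handled by the Bernig-Fu Fourier formula $\widehat{\mu_{k,q}}=\mu_{2n-k,n-k+q}$, which identifies $\hat u=\tfrac{2}{\pi}\mu_{2n-2,n-2}$ with an explicit invariant form on the sphere bundle; substituting this form into the convolution rule \eqref{eq_moduledef} and simplifying $*_1^{-1}(*_1\gamma_{k,q}\wedge *_1 d\beta_{2n-2,n-2})$ within the algebra of $\overline{U(n)}$-invariant forms produces a second linear combination of the same three basis elements. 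Adding and dividing by $4$, the $\Gamma_{k-2,q}$ contributions cancel, leaving the claimed two-term expansion.

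The main obstacle lies in this cancellation: one must keep track of several factors of $c_{n,k,q}$, $\omega_{2n-k}$, and the Hodge-type operator $*_1$ simultaneously when expanding $\hat u*\Gamma_{k,q}$, and match them against the iterated-$\calL_T$ coefficients coming from $\hat t^{*2}*\Gamma_{k,q}$. Once the coefficient of $\Gamma_{k-2,q}$ in $\hat t^{*2}+\hat u$ has been verified to vanish, the remaining coefficients of $\Gamma_{k-2,q-1}$ and $\Gamma_{k-2,q-2}$ simplify to the stated expressions by straightforward manipulation with the volumes $\omega_j$, using in particular $\omega_{2n-k+2}/\omega_{2n-k}=2\pi/(2n-k+2)$.
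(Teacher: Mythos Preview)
Your computation for $\hat t$ via $\calL_T\gamma_{k,q}$ is correct and in fact mirrors the argument the paper uses later for the $B_{k,q}$ in Proposition~\ref{prop_taction}. The issue is the $\hat s$ part.

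Your plan for $\hat s$ is to write $4\hat s=\hat t^{*2}+\hat u$, handle $\hat t^{*2}$ by iterating the Lie derivative, and handle $\hat u$ by plugging an explicit representing form into~\eqref{eq_moduledef}. This is in principle workable, but what you have written is only an outline: to use \eqref{eq_moduledef} you first need a form $\eta$ for $\hat u$ with $D\eta=d\eta$, i.e.\ you must compute the Rumin correction of $\beta_{2n-2,n-2}$ (or of whatever form you choose), then carry out the $*_1$ calculus, and finally verify the cancellation of the $\Gamma_{k-2,q}$ term against the $\calL_T^2$ contribution. You flag this cancellation as ``the main obstacle'' but do not actually perform it, nor do you give a reason a priori why $\hat u*\Gamma_{k,q}$ lands in $\spn\{\Gamma_{k',q'}\}$; without that, the whole submodule claim is hanging on an unverified computation.

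The paper avoids all of this. The key observation you are missing is structural rather than computational: in~\eqref{eq_moduledef} the operator $*_1$ acts only on the $V$-factor of $\Omega^*(SV)^{tr}$, so it sends multiples of the $1$-form $\gamma$ to multiples of $\gamma$. Since $\gamma_{k,q}$ is a multiple of $\gamma$, so is $*_1^{-1}(*_1\gamma_{k,q}\wedge *_1 d\eta)$ for \emph{any} invariant $\eta$ with $D\eta=d\eta$; hence $\phi*\Gamma_{k,q}\in\spn\{\Gamma_{k',q'}\}$ for every $\phi\in\Val^{U(n)}$, proving the submodule statement in one line. Once this is known, the explicit coefficients fall out for free: $\glob$ is injective on $\spn\{\Gamma_{k,q}\}$ because $\glob(\Gamma_{k,q})=\mu_{k,q}$ and the hermitian intrinsic volumes are linearly independent, so the formulas for $\hat t*\Gamma_{k,q}$ and $\hat s*\Gamma_{k,q}$ are read off directly from the Bernig--Fu formulas for $\hat t*\mu_{k,q}$ and $\hat s*\mu_{k,q}$ in \cite{bernig_fu11}. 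No Rumin differential, no $*_1$ expansion, no cancellation to check.
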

\begin{proof}
Fix a valuation $\phi\in\Val^{U(n)}$ and assume that it is represented by an $\overline{U(n)}$-invariant form $\eta\in\Omega^{2n-1}(S\CC^n)$ satisfying $D\eta=d\eta$. By equation \eqref{eq_moduledef}, the convolution $\phi*\Gamma_{k,q}$ is represented by the form
\begin{equation}\label{eq_gammamultiple}
*_1^{-1} ( *_1 \gamma_{k,q} \wedge *_1  d\eta).
\end{equation}
Observe that the operator $*_1$ maps the subspace of forms which are multiples of the $1$-form $\gamma$ onto itself. Hence \eqref{eq_gammamultiple} is not only $\overline{U(n)}$-invariant, but also a multiple of $\gamma$ and therefore a linear combination of certain $\gamma_{k',q'}$. We conclude that the subspace spanned by the $\Gamma_{k,q}$ is a submodule. Since the globalization map is a module homomorphism and injective when restricted to the subspace spanned by the $\Gamma_{k,q}$, the formulas for $\hat t * \Gamma_{k,q}$ and $\hat s *\Gamma_{k,q}$ follow immediately from the expressions for $\hat t * \mu_{k,q}$ and $\hat s *\mu_{k,q}$ which are given in \cite{bernig_fu11}*{Lemma 5.2} and \cite{bernig_fu11}*{Corollary 5.10}. 
\end{proof}

By \eqref{eq_tsvaluations} and since $\widehat{\mu_1}=\mu_{2n-1}$ by \eqref{eq_fintrinsic}, we have
$$\hat{t} *\Psi=\frac{2}{\pi}\mu_{2n-1}*\Psi, \qquad \Psi\in\Area.$$
Together with Proposition~\ref{prop_gammasubmod} the following proposition gives a complete description of the action of $\hat t$ on $\Area^{U(n)}$ in terms of the measures $B_{k,q}$ and $\Gamma_{k,q}$.

\begin{proposition}\label{prop_taction}

\begin{align*}
\hat t*   B_{k,q}=&\frac{\omega_{2n-k+1}}{\pi\omega_{2n-k}}\bigg( (k-2q+1)B_{k-1,q-1} \\
						&+\frac{2(n-k+q+1)(k-2q-1)}{k-2q}B_{k-1,q}+\frac{2(n-k+q+1)}{k-2q}\Gamma_{k-1,q}\bigg)
\end{align*}

\begin{align*}
\hat t* N_{k,q}=\frac{\omega_{2n-k+1}}{\pi\omega_{2n-k}} \frac{2n-k+1}{2n-k}& \bigg( (k-2q+1)N_{k-1,q-1} \\
&+ \frac{2(n-k+q)(k-2q-1)}{k-2q}N_{k-1,q} \bigg).
\end{align*}
\end{proposition}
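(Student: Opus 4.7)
The driving identity is Lemma~\ref{lem_convintrinsic} together with $\hat t = \frac{2}{\pi}\mu_{2n-1}$, which follows from \eqref{eq_tsvaluations} and $\widehat{\mu_1}=\mu_{2n-1}$ (see \eqref{eq_fintrinsic}). Combining them gives
$$\hat t * \Psi(K,A) = \frac{1}{\pi}\int_{N(K)\cap\pi_2^{-1}(A)}\calL_T\omega$$
whenever $\Psi$ is represented by $\omega$. The strategy is therefore to compute $\calL_T\beta_{k,q}$ explicitly and re-express the result as a linear combination of $\beta_{k-1,q-1}$, $\beta_{k-1,q}$ and $\gamma_{k-1,q}$, from which the formula for $\hat t * B_{k,q}$ drops out.

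First, I compute the Lie derivatives along $T$ of the basic $\overline{U(n)}$-invariant forms. Since $T=\sum\xi_i\partial_{x_i}+\eta_i\partial_{y_i}$ has no $\partial_\xi,\partial_\eta$ components and $\theta_0$ involves only $d\xi,d\eta$, we have $\calL_T\theta_0=0$. Cartan's formula together with the direct verifications $i_T\beta=0$, $d\beta=\theta_1$, $i_T\theta_1=\gamma$, $d\theta_1=0$, $i_T\theta_2=\beta$, $d\theta_2=0$, and $d\gamma=2\theta_0$ yields
$$\calL_T\beta=\gamma,\qquad \calL_T\theta_1=2\theta_0,\qquad \calL_T\theta_2=\theta_1.$$

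Applying the Leibniz rule to $\beta_{k,q}=c_{n,k,q}\,\beta\wedge\theta_0^{n-k+q}\wedge\theta_1^{k-2q-1}\wedge\theta_2^q$ leaves three surviving terms: the factor $\beta$ becomes $\gamma$ and yields (up to a constant) $\gamma_{k-1,q}$; differentiating one copy of $\theta_1$ produces $2(k-2q-1)$ times a form of type $\beta_{k-1,q}$; and differentiating one copy of $\theta_2$ produces $q$ times a form of type $\beta_{k-1,q-1}$. The coefficients in the proposition then follow by evaluating the ratios
$$\frac{c_{n,k,q}}{c_{n,k-1,q}}=\frac{(n-k+q+1)\,\omega_{2n-k+1}}{(k-2q)\,\omega_{2n-k}},\qquad \frac{c_{n,k,q}}{c_{n,k-1,q-1}}=\frac{(k-2q+1)\,\omega_{2n-k+1}}{q\,\omega_{2n-k}},$$
and multiplying by $1/\pi$ and by $2$ (from $\gamma_{k-1,q}=\tfrac12 c_{n,k-1,q}\gamma\wedge\cdots$).

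For the second formula, I start from $N_{k,q}=\frac{2(n-k+q)}{2n-k}(\Gamma_{k,q}-B_{k,q})$ and combine the formula just obtained with Proposition~\ref{prop_gammasubmod}. The key observation is that the $\Gamma_{k-1,q}$-coefficients cancel with the matching piece of the $B_{k-1,q}$-coefficients, after which the analogous relation $\Gamma_{k-1,q'}-B_{k-1,q'}=\frac{2n-k+1}{2(n-k+1+q')}N_{k-1,q'}$ expresses the remainder purely in terms of $N_{k-1,q-1}$ and $N_{k-1,q}$. A conceptual sanity check makes this cancellation expected: since $\glob$ is a $\Val^{sm}$-module homomorphism and $\glob(N_{k,q})=0$, the result must lie in $\ker(\glob|_{\Area^{U(n)}_{k-1}})$, which by Proposition~\ref{prop_basis} is spanned by the $N_{k-1,\cdot}$. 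The only real work is therefore the bookkeeping of normalization constants, and I expect no conceptual obstacle.
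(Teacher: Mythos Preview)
Your proposal is correct and follows essentially the same route as the paper: reduce to computing $\calL_T\beta_{k,q}$ via Lemma~\ref{lem_convintrinsic} and $\hat t=\frac{2}{\pi}\mu_{2n-1}$, use the basic identities $\calL_T\beta=\gamma$, $\calL_T\theta_1=2\theta_0$, $\calL_T\theta_2=\theta_1$, $\calL_T\theta_0=0$, and then read off the coefficients from the ratios $c_{n,k,q}/c_{n,k-1,q'}$. Your derivation of the $N_{k,q}$ formula from the $B_{k,q}$ formula and Proposition~\ref{prop_gammasubmod} is more explicit than what the paper spells out (the paper simply stops after the $B_{k,q}$ case), and your $\glob$-based sanity check is a pleasant extra, but there is no substantive difference in method.
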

\begin{proof}

By Lemma~\ref{lem_convintrinsic}, all we have to do is to compute the Lie derivative of $\beta_{k,q}$ with respect to the Reeb vector field $T$. 
An easy computation shows that 
	$$\calL_T\gamma=\calL_T\theta_0=0,\qquad \calL_T\beta=\gamma, \qquad \calL_T\theta_1=2\theta_0,\qquad \calL_T\theta_2=\theta_1,$$
and hence we obtain

\begin{align*}\calL_T\beta_{k,q}= &q\frac{c_{n,k,q}}{c_{n,k-1,q-1}} \beta_{k-1,q-1} \\
& +2(k-2q-1)\frac{c_{n,k,q}}{c_{n,k-1,q}}\beta_{k-1,q}+2\frac{c_{n,k,q}}{c_{n,k-1,q}}\gamma_{k-1,q}.
\end{align*}
The formula for $\hat t* B_{k,q}$ follows immediately.
\end{proof}

By \eqref{eq_tsvaluations} and \cite{bernig_fu11}*{Theorem 3.2}, the Fourier transform of $s$ equals
$$\hat s=  \frac{1}{\pi}\left( \mu_{2n-2,n-1}+\frac{1}{2}\mu_{2n-2,n-2}\right).$$
Hence the valuation $\hat s$ can be represented by the $(2n-1)$-form
$$\omega=\frac{1}{\pi^2(n-2)!}\left(\frac{1}{2(n-1)}\gamma\wedge \theta_2^{n-1}+ \frac{1}{4}\beta\wedge\theta_1\wedge\theta_2^{n-2}\right).$$
Next we compute the Rumin differential of $\omega$. We do not really need an explicit formula, what is important is that $D\omega$  is a multiple of $\beta$. To increase readability, we will sometimes drop the $\wedge$-notation in the following. All products of forms are understood to be wedge products.
\begin{lemma}\label{lem_rumin}
The Rumin differential of $\omega$ equals
$$\frac{1}{4\pi^2(n-3)!} \alpha\wedge\beta\wedge(\theta_1^2+(d\alpha)^2)\wedge\theta_2^{n-3}.$$
In particular, $D\omega$ is a multiple of $\beta$.
	
\end{lemma}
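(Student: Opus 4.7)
The approach is to apply the definition of the Rumin differential directly: find a primitive $\xi$ making $d\omega+d\alpha\wedge\xi$ vanish on the contact plane, and then differentiate $\omega+\alpha\wedge\xi$.

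First, I would compute $d\omega$. The differentials of the invariant $1$-forms on $S\CC^n$ are read off from the coordinate expressions: $d\alpha=-\theta_s$, $d\beta=-\theta_1$, $d\gamma=2\theta_0$, and $d\theta_j=0$ for $j\in\{0,1,2,s\}$. Applying $d$ to the two summands defining $\omega$ yields an explicit $2n$-form which is a linear combination of $\theta_0\wedge\theta_2^{n-1}$ and $\theta_1^2\wedge\theta_2^{n-2}$.

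Second, I would solve for $\xi\in\Omega^{2n-2}(S\CC^n)^{\overline{U(n)}}$ satisfying
$$d\omega\equiv\theta_s\wedge\xi\pmod{\alpha}.$$
Since the space of $\overline{U(n)}$-invariant forms on $S\CC^n$ in each fixed degree is finite-dimensional and spanned by known polynomial expressions in $\alpha,\beta,\gamma,\theta_0,\theta_1,\theta_2,\theta_s$, this becomes a finite linear system. Its solvability rests on the fact that $\theta_s$ restricts to the contact plane as a symplectic form, so wedging by $\theta_s$ is essentially the Lefschetz operator, which is bijective in the bidegree relevant here. Closing the system requires the algebraic relations among invariant forms on $S\CC^n$, in particular those coming from $r^2=1$, i.e., $\sum_i(\xi_i\,d\xi_i+\eta_i\,d\eta_i)\equiv 0$.

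Third, expand
$$D\omega=d(\omega+\alpha\wedge\xi)=d\omega+d\alpha\wedge\xi-\alpha\wedge d\xi.$$
By construction the first two terms sum to a multiple of $\alpha$, so only the $-\alpha\wedge d\xi$ contribution is new. Substituting the explicit $\xi$ from Step~2 and simplifying with the same invariant-form identities collapses the result to the claimed $\tfrac{1}{4\pi^2(n-3)!}\alpha\wedge\beta\wedge(\theta_1^2+(d\alpha)^2)\wedge\theta_2^{n-3}$. The factor $\beta$ appears in every surviving summand because each is ultimately produced via the identity $i_T\theta_2=\beta$, after the $\gamma$-contributions from the $\gamma\wedge\theta_2^{n-1}$ part of $\omega$ are absorbed through the $r^2=1$ relation.

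The main obstacle is Step~2: writing down the invariant $(2n-2)$-form $\xi$ explicitly. All the algebraic relations among invariant forms needed for the whole argument are collected there, and once $\xi$ is in hand, Steps~1 and~3 are routine bookkeeping. An efficient shortcut would be to guess $\xi$ from the structure of the expected answer and verify $d(\omega+\alpha\wedge\xi)$ equals the stated expression, bypassing the full linear algebra of Step~2 but still requiring the same underlying relations.
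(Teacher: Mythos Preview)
Your plan is correct and matches the paper's approach: the paper takes exactly the shortcut you describe at the end, writing down $\xi=\tfrac{1}{4}(d\alpha+2\beta\wedge\gamma)\wedge\theta_2^{n-2}$ explicitly and verifying that $\alpha\wedge d\omega'+\alpha\wedge d\alpha\wedge\xi=0$. The one technique worth noting is that the paper checks this identity at the single point $(0,e_1)\in S\CC^n$ using $U(n)$-invariance, which reduces everything to a short coordinate computation and avoids both the linear system in your Step~2 and any need for the abstract $r^2=1$ relations.
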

\begin{proof}
	To simplify the notation we put $\omega'=\pi^2(n-2)!\omega$. Remember that $D\omega'=d(\omega'+\alpha\wedge \xi)$, where $\alpha \wedge\xi $ is the unique $(2n-1)$-form such that $d(\omega'+\alpha\wedge \xi)$ is a multiple of $\alpha$. We claim that we can choose
$$\xi=\frac{1}{4}(d\alpha +2\beta\wedge\gamma)\wedge\theta_2^{n-2}.$$
To prove this we use that $d(\omega'+\alpha\wedge\xi)$ is a multiple of $\alpha$ if and only if
\begin{equation}\label{eq_ruxi}\alpha \wedge d\omega'+ \alpha \wedge d\alpha \wedge \xi=0.\end{equation}
Since all forms involved are $U(n)$-invariant, it suffices to do the calculation at the point $(0,e_1)\in S\CC^n$. At this point 
$d\xi_1=0$, $\alpha=dx_1$, $\beta=dy_1$, and $\gamma=d\eta_1$. Next, we compute 
$$\alpha\wedge d\alpha\wedge\xi = \frac{1}{4}\alpha(d\alpha +\beta\wedge\gamma)^2\wedge\theta_2^{n-2}= -\frac{(n-2)!}{2}\sum_{i=2}^n dx_1d\xi_id\eta_i\bigwedge_{j=2}^n dx_jdy_j.$$
Similarly, we obtain 
$$\frac{1}{4}\alpha\wedge\theta_1^2\wedge\theta_2^{n-2}=-\frac{(n-2)!}{2}\sum_{i=2}^n dx_1d\xi_id\eta_i\bigwedge_{j=2}^n dx_jdy_j$$
and
$$\frac{1}{n-1}\alpha\wedge\theta_0\wedge\theta_2^{n-1}=(n-2)!\sum_{i=2}^n dx_1d\xi_id\eta_i\bigwedge_{j=2}^n dx_jdy_j.$$
Since $d\omega'=\frac{1}{n-1}\theta_0\wedge\theta_2^{n-1}+\frac{1}{4}\theta_1^2\wedge\theta_2^{n-2}$, we conclude that (\ref{eq_ruxi}) holds.

In particular, (\ref{eq_ruxi}) implies that $d\omega'+d\alpha\wedge\xi=\alpha\wedge i_T(d\omega'+d\alpha\wedge\xi)$ and hence we obtain
\begin{align*}
	D\omega' & =d(\omega'+\alpha\wedge\xi)=d\omega'+d\alpha\wedge\xi -\alpha\wedge d\xi\\
	&= \alpha\wedge i_T(d\omega'+d\alpha\wedge\xi)-\alpha\wedge d\xi\\
	&= \frac{n-2}{4}\alpha\wedge\beta\wedge(\theta_1^2+(d\alpha)^2)\wedge\theta_2^{n-3}
\end{align*}
\end{proof}

Again, the following formulas have to be understood as follows: If for a certain pair of indices $k$ and $q$ one of the area measures on the right hand side does not exist, then it has to be replaced by $0$.

\begin{proposition}\label{prop_saction}
\begin{align*}
\hat s * B_{k,q}=\frac{(k-2q+2)(k-2q+1)}{2\pi (2n-k+2)} & B_{k-2,q-2}\\
&+\frac{2(n-k+q+1)(n-q+1)}{\pi (2n-k+2)} B_{k-2,q-1}
\end{align*}

\end{proposition}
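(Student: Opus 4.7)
The plan is to combine the formula \eqref{eq_moduledef} for the module action with the explicit computation of the Rumin differential $D\omega$ supplied by Lemma~\ref{lem_rumin}. Representing $\hat s$ by the form $\omega$ introduced immediately before Lemma~\ref{lem_rumin}, equation \eqref{eq_moduledef} exhibits $\hat s * B_{k,q}$ as the area measure given by
\[
*_1^{-1}\bigl(*_1\beta_{k,q}\wedge *_1 D\omega\bigr),
\]
and by Lemma~\ref{lem_rumin} the second factor is a multiple of $\alpha\wedge\beta\wedge\bigl(\theta_{1}^{2}+(d\alpha)^{2}\bigr)\wedge\theta_{2}^{n-3}$.

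First I would argue that $\hat s * B_{k,q}$ lies in the span of the $B_{k',q'}$ alone, with no $\Gamma_{k',q'}$ contribution. This is the analogue, for the horizontal form $\beta$, of the observation used in the proof of Proposition~\ref{prop_gammasubmod} that $*_1$ maps multiples of the vertical form $\gamma$ to multiples of $\gamma$. The essential input here is that $D\omega$ contains no factor of $\gamma$; combined with $\beta_{k,q}$ being a multiple of $\beta$, this forces the form above to be, modulo multiples of $\alpha$ and $d\alpha$, a $\overline{U(n)}$-invariant multiple of $\beta$, hence a linear combination of the $\beta_{k-2,q'}$. A bidegree count—matching the factors of $\theta_0$ and $\theta_2$ on both sides and accounting separately for the two summands $\theta_1^{2}$ and $(d\alpha)^{2}$ inside $D\omega$—then restricts the admissible indices to $q'\in\{q-2,q-1\}$, yielding
\[
\hat s * B_{k,q} = a\,B_{k-2,q-2} + b\,B_{k-2,q-1}
\]
for some scalars $a,b$.

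Finally, I would pin down $a$ and $b$ by globalization. Since $\glob$ is a $\Val^{sm}$-module homomorphism and $\glob(B_{k,q}) = \mu_{k,q}$, the identity above descends to
\[
\hat s * \mu_{k,q} = a\,\mu_{k-2,q-2} + b\,\mu_{k-2,q-1},
\]
and comparison with the formula for $\hat s * \mu_{k,q}$ from \cite{bernig_fu11}*{Corollary 5.10}---or, equivalently, with the globalization of Proposition~\ref{prop_gammasubmod}---identifies $a$ and $b$ with the coefficients in the statement.

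The main obstacle is the structural step ruling out a $\Gamma$-contribution. Unlike in Proposition~\ref{prop_gammasubmod}, where $*_1$ manifestly preserves multiples of $\gamma$, the horizontal form $\beta$ enjoys no such preservation; one must work carefully modulo the ideal generated by $\alpha$ and $d\alpha$ and exploit very precisely the vanishing of the $\gamma$-component in $D\omega$ that is provided by Lemma~\ref{lem_rumin}.
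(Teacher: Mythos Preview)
Your proposal is correct and follows essentially the same route as the paper: use Lemma~\ref{lem_rumin} to see that $D\omega$ is a multiple of $\beta$, deduce via \eqref{eq_moduledef} that $\hat s * B_{k,q}$ lies in the span of the $B_{k',q'}$, and then read off the coefficients by applying $\glob$ and comparing with the known formula for $\hat s * \mu_{k,q}$ from \cite{bernig_fu11}. The only difference is that your intermediate bidegree count restricting to $q'\in\{q-2,q-1\}$ is unnecessary: once you know the result lies in $\spn\{B_{k',q'}\}$, injectivity of $\glob$ on that subspace determines everything directly from $\hat s * \mu_{k,q}$, without first narrowing the indices.
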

\begin{proof}
	Our starting point is the formula
$$s\cdot \mu_{k,q}=\frac{(k-2q+2)(k-2q+1)}{2\pi(k+2)}\mu_{k+2,q}+ \frac{2(q+1)(k-q+1)}{\pi(k+2)}\mu_{k+2,q+1},$$
which follows from \cite{bernig_fu11}*{Corollary 5.10}. Using the fact that
$\hat s*\mu_{k,q}=\FF(s\cdot \mu_{2n-k,n-k+q})$ and $\FF(\mu_{k,q})=\mu_{2n-k,n-k+q}$ (see \cite{bernig_fu11}*{Theorem 3.2}), we obtain a similar formula for $\hat s*\mu_{k,q}$.  It follows from Lemma~\ref{lem_rumin} that the Rumin differential of the form representing $\hat s$ is a multiple of the $1$-form $\beta$. Using this and \eqref{eq_moduledef}, we obtain that $\hat s* B_{k,q}$ is a linear combination of certain $B_{k',q'}$. Since the globalization map is injective when restricted to the subspace spanned by the $B_{k,q}$, we obtain the formula for $\hat s* B_{k,q}$ immediately from the formula for $\hat s*\mu_{k,q}$.

\end{proof}

\begin{definition}
We define maps  $\frakbb\colon \Val^{U(n)}\rightarrow \Area^{U(n)}$ and $\frakgg\colon\Val^{U(n)}\rightarrow \Area^{U(n)}$ by
$$\frakbb(\phi)=\hat \phi* B_{2n-1,n-1}$$
and
$$\frakgg(\phi)=\hat \phi* \Gamma_{2n-2,n-1}.$$
\end{definition}
Observe that if we view the algebra $(\Val^{U(n)},\;\cdot\;)$ as a module over itself,  then $\frakbb$ and $\frakgg$ become $\Val^{U(n)}$-module homomorphisms.

\begin{lemma}\label{lem_magic} Let $i$ and $j$ be non-negative integers. Then
\begin{enumerate}

\item

\begin{align*}
\frakbb(u^i)	&= \frac{4^i i!}{\pi^i} \left(B_{2n-2i-1,n-i-1} -\frac{2i}{2i+1}	 \Gamma_{2n-2i-1,n-i-1} \right)\\
			 	&= \frac{4^ii!}{(2i+1)\pi^i} \left(\Delta_{2n-2i-1,n-i-1} -2(i+1)	 N_{2n-2i-1,n-i-1} \right)
\end{align*}

and 

$$\frakgg(u^i)= \frac{(2n+1)!}{ n!\pi^i}\Delta_{2(n-i-1),n-i-1}.$$

\item

\begin{align*}
\binom{2i+2j+1}{2j} \frakbb(t^{2j}u^i)\equiv &\frac{4^{i+j}(i+j)!}{\pi^{i+j}} \times\\
 &\sum_{k=0}^{\min(j,n-i-j-1)} (2k+1)\binom{i+j-k}{i} B_{2(n-i-j)-1,n-i-j-k-1}
\end{align*} 
modulo $\spn\{\Gamma_{k,q}\}$.

\end{enumerate}

\end{lemma}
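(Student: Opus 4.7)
The strategy in all three parts is a direct computation using the explicit formulas for the action of $\hat s$ and $\hat t$ on $\{B_{k,q}\}$ and $\{\Gamma_{k,q}\}$ from Propositions~\ref{prop_gammasubmod},~\ref{prop_taction}, and~\ref{prop_saction}. The crucial observation is that, since $u=4s-t^2$ in the Alesker algebra and the Fourier transform converts Alesker products into convolutions by~\eqref{eq_fourierhom}, we have $\hat u=4\hat s-\hat t*\hat t$. Thus $\frakbb(u^i)=\hat u^{*i}*B_{2n-1,n-1}$ and $\frakgg(u^i)=\hat u^{*i}*\Gamma_{2n-2,n-1}$ can be evaluated by iterating these explicit actions.

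For the formula for $\frakbb(u^i)$ in part~(1), I would proceed by induction on $i$. The base case $i=0$ is immediate: $\frakbb(1)=B_{2n-1,n-1}$, and the $\Gamma$-coefficient $\tfrac{2i}{2i+1}$ vanishes. In the inductive step, applying $4\hat s-\hat t*\hat t$ to the hypothesis produces a~priori four terms indexed by $(k',q')\in\{(2n-2i-3,n-i-3),(2n-2i-3,n-i-2)\}$ in each of $B$ and $\Gamma$. The main algebraic content is to verify that the $(n-i-3)$-terms cancel between the $4\hat s$ and $\hat t^{*2}$ contributions, leaving only the single pair $(B_{2n-2i-3,n-i-2},\Gamma_{2n-2i-3,n-i-2})$ with coefficients matching the claimed formula at level $i+1$. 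For $\frakgg(u^i)$, the calculation is easier because $\spn\{\Gamma_{k,q}\}$ is a submodule of $\Area^{U(n)}$ by Proposition~\ref{prop_gammasubmod}, and globalization restricted to this submodule is an isomorphism onto $\spn\{\mu_{k,q}\}$ by~\eqref{eq_globarea}. Hence $\frakgg(u^i)$ is determined by $\glob(\frakgg(u^i))=u^i\cdot\mu_{2n-2,n-1}$ in the Alesker algebra, which can be computed using Theorem~\ref{thm_fu}, and the result lifted back via $\Delta_{2(n-i-1),n-i-1}=\Gamma_{2(n-i-1),n-i-1}$.

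Part~(2) then reduces cleanly to a computation in the quotient $\Area^{U(n)}/\spn\{\Gamma_{k,q}\}$. Using $\widehat{t^{2j}u^i}=\hat t^{*2j}*\hat u^{*i}$ and the mod-$\Gamma$ consequence of part~(1), namely $\frakbb(u^i)\equiv\tfrac{4^i i!}{\pi^i}B_{2n-2i-1,n-i-1}$, the claim becomes an identity for $\hat t^{*2j}*B_{2n-2i-1,n-i-1}$ modulo $\Gamma$. Proposition~\ref{prop_taction} reduces modulo $\Gamma$ to the simpler two-term recursion
$$\hat t*B_{k,q}\equiv \frac{\omega_{2n-k+1}}{\pi\omega_{2n-k}}\Bigl((k-2q+1)B_{k-1,q-1}+\frac{2(n-k+q+1)(k-2q-1)}{k-2q}B_{k-1,q}\Bigr),$$
and a secondary induction on $j$ using this recursion, together with standard binomial identities to match the coefficients $(2k+1)\binom{i+j-k}{i}$, completes the proof.

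The main obstacle is bookkeeping. The surprising cancellation in part~(1)—that $\hat u$ applied to the one-parameter family $B_{2n-2i-1,n-i-1}-\tfrac{2i}{2i+1}\Gamma_{2n-2i-1,n-i-1}$ again lies in this family up to rescaling—is sensitive to the precise coefficients and to the $\omega$-ratios such as $\omega_{k+1}/\omega_k$ appearing in the propositions, and verifying it requires care. For part~(2), the identification of the closed form of the iterated $\hat t$-action, in particular matching the coefficient $(2k+1)\binom{i+j-k}{i}$, is the key combinatorial step and where most of the work will lie.
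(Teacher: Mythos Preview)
Your proposal is correct and follows essentially the same route as the paper: part~(1) by induction on $i$ using the explicit action of $\hat u=4\hat s-\hat t*\hat t$ from Propositions~\ref{prop_gammasubmod}, \ref{prop_taction}, \ref{prop_saction}, and part~(2) by induction on $j$ using the mod-$\Gamma$ recursion for $\hat t*B_{k,q}$, with part~(1) supplying the base case $j=0$. One small slip: $\glob(\frakgg(u^i))=\widehat{u^i}*\mu_{2n-2,n-1}$ is a convolution, not an Alesker product, so to invoke Theorem~\ref{thm_fu} you must first apply the Fourier transform (or simply iterate the formula of Proposition~\ref{prop_gammasubmod} directly, as the paper does); this does not affect the validity of your strategy.
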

\begin{proof}
(1) follows immediately from Propositions \ref{prop_gammasubmod}, \ref{prop_taction}, and \ref{prop_saction} by induction on $i$. This implies in particular that (2) holds true for $j=0$. 
Denote the right-hand side of (2) by $S(i,j)$. Since the subspace spanned by the $\Gamma_{k,q}$ is invariant under the module action, it is sufficient to show that
$$\frac{(2i+2j+2)(2i+2j+3)}{(2j+1)(2j+2)} \hat t*\hat t* S(i,j)=S(i,j+1)$$
to finish the proof of (2) by induction on $j$.
But this follows from a simple, albeit long and tedious, calculation using Proposition~\ref{prop_taction}.

\end{proof}

\subsection{Proof of Theorem~\ref{thm_main}}

To simplify the notation, we put $\frakbb_k=\frakbb(\Val_k^{U(n)})$ and $\frakgg_k=\frakgg(\Val_k^{U(n)})$.

\begin{lemma}\label{lem_bgdim}

\begin{equation}\label{eq_dimbb}
\dim \frakbb_k=\dim\Val_k^{U(n)} \qquad 0\leq k<2n.
\end{equation}

\begin{equation}\label{eq_dimgg}\dim  \frakgg_k=\left\{ 
\begin{array}{ll} 
\dim\Val_k^{U(n)}& \qquad  0\leq k<n\\
\dim\Val_{k}^{U(n)}-1& \qquad  n\leq k<2n-1\\
\end{array}
\right.
\end{equation}
Moreover, the image of $\frakgg\colon\Val^{U(n)}\rightarrow \Area^{U(n)}$ coincides with the span of the $\Gamma_{k,q}$.
\end{lemma}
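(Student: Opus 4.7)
The plan is to handle the two statements via quite different techniques.

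\textbf{Strategy for $\frakbb$.} The key observation is that $B_{2n-1,n-1}$ must be a nonzero scalar multiple of the classical surface area measure $S_{2n-1}$. Indeed, Corollary~\ref{cor_dimarea} gives $\dim \Area^{U(n)}_{2n-1} = \dim\Val^{U(n)}_{2n-1}+\dim\Val^{U(n)}_{2n}-1 = 1$, and both $B_{2n-1,n-1}$ (nonzero by Proposition~\ref{prop_basis}) and $S_{2n-1}$ (nonzero, $U(n)$-invariant) lie on this line; thus $B_{2n-1,n-1}=c\,S_{2n-1}$ with $c\neq 0$. Combined with the identity $\delta\psi=\psi*S_{2n-1}$ from \eqref{eq_deltaconv}, this yields
\[
\frakbb(\phi)\;=\;\hat\phi*B_{2n-1,n-1}\;=\;c\,\delta(\hat\phi),
\]
so $\frakbb=c\,(\delta\circ\FF)$. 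The kernel of $\delta$ consists only of constants: if $\delta\phi=0$ and $\phi$ is $k$-homogeneous, then setting $L=K$ in \eqref{eq_firstvar} gives $0=\dt\phi((1+t)K)=k\phi(K)$, forcing $\phi=0$ unless $k=0$. Since all $U(n)$-invariant valuations are even ($-I\in U(n)$), $\FF$ is an involution on $\Val^{U(n)}$ sending $\chi=\mu_0$ to $\mu_{2n}$, so $\ker\frakbb=\RR\cdot\mu_{2n}\subset\Val^{U(n)}_{2n}$. Hence $\frakbb$ is injective on every graded piece $\Val^{U(n)}_k$ with $0\le k<2n$, which gives \eqref{eq_dimbb}.

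\textbf{Strategy for $\frakgg$.} By Proposition~\ref{prop_gammasubmod} the subspace $\spn\{\Gamma_{k,q}\}$ is a $\Val^{U(n)}$-submodule containing $\Gamma_{2n-2,n-1}$, so $\img\frakgg\subseteq\spn\{\Gamma_{k,q}\}$. For the reverse inclusion, Lemma~\ref{lem_magic}(1) furnishes the diagonal generators: $\frakgg(u^i)$ is a nonzero scalar multiple of $\Gamma_{2(n-i-1),n-i-1}$ for each $0\le i\le n-1$. The remaining $\Gamma_{k,q}$ are produced by downward induction on $k$: assuming every admissible $\Gamma_{k+1,q'}$ lies in $\img\frakgg$, Proposition~\ref{prop_gammasubmod} expresses $\hat t*\Gamma_{k+1,q'}$ as a specific linear combination of $\Gamma_{k,q'-1}$ and $\Gamma_{k,q'}$. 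A case analysis at the boundary of the index region (where one of the two summands may fail to exist) together with the diagonal inputs allows one to isolate each individual $\Gamma_{k,q}$. This establishes $\img\frakgg=\spn\{\Gamma_{k,q}\}$.

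Once surjectivity onto $\spn\{\Gamma_{k,q}\}$ has been proved, $\dim\frakgg_k$ equals the number of admissible pairs $(k',q')$ with $k'=2n-k-2$ and $\max\{0,k'-n+1\}\le q'\le\lfloor k'/2\rfloor$. Splitting into cases according to the parity of $k$ and whether $k<n$ or $n\le k<2n-1$, a direct count matches $\dim\Val^{U(n)}_k$ in the first range and $\dim\Val^{U(n)}_k-1$ in the second, giving \eqref{eq_dimgg}.

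\textbf{Main obstacle.} The principal technical difficulty lies in the inductive surjectivity step for $\frakgg$: one has to verify, at each level and at every boundary of the admissible index region where the two-term recursion $\hat t*\Gamma_{k+1,q'}$ degenerates, that the resulting triangular linear system (together with the diagonal contributions from Lemma~\ref{lem_magic}(1)) has full rank. By contrast, the injectivity of $\frakbb$ follows almost immediately once the proportionality $B_{2n-1,n-1}\propto S_{2n-1}$ is recognized, and the dimension counts are routine.
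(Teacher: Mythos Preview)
Your argument for \eqref{eq_dimbb} is essentially the paper's: the paper simply notes $2B_{2n-1,n-1}=S_{2n-1}$, hence $2\frakbb(\phi)=\delta(\hat\phi)$, and invokes the fact that $\ker\delta$ is spanned by the constants. Your extra justification (via Corollary~\ref{cor_dimarea} and the explicit kernel computation) is fine and leads to the same conclusion.

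For \eqref{eq_dimgg} and the description of $\img\frakgg$, your route is genuinely different from the paper's. The paper does \emph{not} run an induction on the $\Gamma_{k,q}$. Instead it observes that $\glob$ is injective on $\spn\{\Gamma_{k,q}\}$ (since $\glob(\Gamma_{k,q})=\mu_{k,q}$), so
\[
\dim\frakgg_k=\dim\glob(\frakgg_k)=\dim\{u\cdot\phi:\phi\in\Val_k^{U(n)}\},
\]
the last equality coming from $\widehat u=\tfrac{2}{\pi}\mu_{2n-2,n-1}$ and the fact that $\glob$ is a module homomorphism. The problem is thereby reduced to the pure valuation-algebra question of when multiplication by $u$ has a kernel, which the paper settles using the Fu relations: for $k<n$ there is no relation in degree $\le k+2$ divisible by $u$, so $u\cdot\phi=0$ forces $\phi=0$; for $k\ge n$ the relation $f_{n+1}=0$ makes $t^{n+1}$ a multiple of $u$, so $\phi\mapsto u\cdot\phi$ is surjective onto $\Val_{k+2}^{U(n)}$. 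Surjectivity of $\frakgg$ onto $\spn\{\Gamma_{k,q}\}$ then follows from the resulting dimension count rather than being proved first.

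Your approach---seed with the diagonal $\Gamma_{2j,j}$ via Lemma~\ref{lem_magic}(1) and propagate by the $\hat t$-recursion of Proposition~\ref{prop_gammasubmod}---is also correct: writing $\hat t*\Gamma_{k+1,q'}$ as a combination of $\Gamma_{k,q'-1}$ and $\Gamma_{k,q'}$ with the leading coefficient $k-2q'+2>0$, a top-down sweep in $q$ at each level $k$ recovers every $\Gamma_{k,q}$, and the boundary degeneracies do not obstruct this. The trade-off is that your method needs the explicit input from Lemma~\ref{lem_magic}(1) and a bidiagonal rank check at each level, whereas the paper's method bypasses all of that by transferring the question to the known structure of $\Val^{U(n)}$ via $\glob$ and the Fourier transform. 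Conversely, your argument is more self-contained in that it never appeals to the Fu relations $f_{n+1},f_{n+2}$.
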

\begin{proof}
Since $2B_{2n-1,n-1}=S_{2n-1}$, we have $2 \frakbb(\phi)=\delta(\widehat{\phi})$ and \eqref{eq_dimbb} follows. 

To prove \eqref{eq_dimgg} first observe that by Proposition~\ref{prop_gammasubmod} the subspace spanned by the area measures $\Gamma_{k,q}$ is invariant under the module action.  Observe also that the globalization map restricted to this subspace is injective and the image of $\frakgg$ is contained in it. Thus,
$$\dim \frakgg_k= \dim \glob(\frakgg_k)=\dim \left\{ u\cdot\phi:  \phi\in \Val^{U(n)}_k\right\},$$
where the second equality follows from $\widehat{u}=\frac{2}{\pi} \mu_{2n-2,n-1}$. 

Suppose $0\leq k<n$ and $\phi\in\Val_k^{U(n)}$. We claim that $u\cdot \phi=0$ implies $\phi=0$. Indeed, since $u=4s-t^2$, the algebra of unitarily invariant valuations is not only generated by $t$ and $s$ but also by $t$ and $u$. Therefore $\phi$  can be expressed as a polynomial in $t$ and $u$ and from $u\cdot \phi=0$ we get a relation in $t$ and $u$. For degrees strictly less than $n+2$, however, there exists only one relation and this one is given by the Fu polynomial
$$f_{n+1}=\frac{1}{(n+1)(-2)^{n}}\sum_{q=0}^{\lfloor \frac{n+1}{2}\rfloor} (-1)^q\binom{k}{2q} t^{n-2q+1} u^q$$
Since $f_{n+1}$ is not a multiple of $u$, we get $\phi=0$. We conclude that 
$$\dim \frakgg_k=\dim\Val_k^{U(n)}$$
for $0\leq k<n$ and consequently $\frakgg_k=\spn\{\Gamma_{2n-k-2,q}\}$.

From the relation given by the Fu polynomial $f_{n+1}$, we deduce that $t^{n+1}$ can be written as $u\cdot \phi$ with some unitarily invariant valuation $\phi$. Since the algebra of unitarily invariant valuations is generated by $t$ and $u$, we conclude that the map $\phi\mapsto u\cdot \phi$ from $\Val_{k-2}^{U(n)}$ to $\Val_k^{U(n)}$ is surjective whenever $k>n$. This implies that

$$\frakgg_k=\dim\Val_{k}^{U(n)}-1$$
for $n\leq k<2n-1$ and therefore $\frakgg_k=\spn\{\Gamma_{2n-k-2,q}\}$. 
\end{proof}

\begin{proposition}\label{prop_span} The module of unitarily invariant area measures is generated by two elements. More precisely,
$$\Area^{U(n)}_{2n-k-1}=\frakbb_k \oplus \frakgg_{k-1}$$
for $1\leq k<n$. If $n\leq k<2n$, then
$$\Area^{U(n)}_{2n-k-1}=\frakbb_k + \frakgg_{k-1}$$
and $\frakbb_k \cap\frakgg_{k-1}$ is $1$-dimensional.
 
\end{proposition}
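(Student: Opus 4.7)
My plan is first to reduce the proposition to a spanning assertion via a dimension count. Combining Corollary~\ref{cor_dimarea} with the Fourier symmetry $\dim \Val_j^{U(n)} = \dim \Val_{2n-j}^{U(n)}$, one gets $\dim \Area_{2n-k-1}^{U(n)} = \dim \Val_k^{U(n)} + \dim \Val_{k+1}^{U(n)} - 1$. Paired with Lemma~\ref{lem_bgdim} and the explicit formula $\dim \Val_j^{U(n)} = \lfloor j/2\rfloor+1$ for $j\le n$ and $n-\lceil j/2\rceil+1$ for $j\ge n$, a direct arithmetic check yields
\[
\dim \frakbb_k + \dim \frakgg_{k-1} - \dim \Area_{2n-k-1}^{U(n)} = \begin{cases} 0, & 1\le k<n,\\ 1, & n\le k<2n. \end{cases}
\]
Hence the entire proposition reduces to showing the spanning $\frakbb_k + \frakgg_{k-1} = \Area_{2n-k-1}^{U(n)}$; once that is proved, the direct sum and intersection statements are forced.

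\textbf{Step 2.} I then identify $\frakbb_k$ conceptually. The identity $B_{2n-1, n-1} = \Delta_{2n-1, n-1} = \tfrac12 S_{2n-1}$ together with $\delta(\psi) = \psi * S_{2n-1}$ (see \eqref{eq_deltaconv}) gives $\frakbb(\phi) = \tfrac12\,\delta(\widehat{\phi})$. By the theorem characterizing $\ker C$ (Theorem~\ref{prop_dimker}), this yields
\[
\frakbb_k = \ker\bigl(C\colon \Area_{2n-k-1}^{U(n)}\to \Vector_{2n-k-1}^{U(n)}\bigr),
\]
with dimensions matching via Lemma~\ref{lem_bgdim} and Fourier. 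Consequently the required spanning is equivalent to the surjectivity $C(\frakgg_{k-1}) = C(\Area_{2n-k-1}^{U(n)})$, whose target has real dimension $\dim \Val_{k+1}^{U(n)}-1$.

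\textbf{Step 3.} Since $C$ is a $\Val^{sm}$-module homomorphism, $C(\frakgg_{k-1}) = \FF(\Val_{k-1}^{U(n)}) * C(\Gamma_{2n-2, n-1})$. I would verify that $C(\Gamma_{2n-2, n-1})$ is nonzero, and then show that its convolution orbit under $\FF(\Val^{U(n)})$---generated by $\widehat{t}$ and $\widehat{s}$, whose action on area measures is explicit from Propositions~\ref{prop_gammasubmod}, \ref{prop_taction}, \ref{prop_saction}---fills $\img C|_{\Area_{2n-k-1}^{U(n)}}$ for every $k$; the $\Val^{U(n)}$-module structure of $\Vector^{U(n)}$ is controlled by Theorem~\ref{thm_dim} and initial data are supplied by Lemma~\ref{lem_magic}(1). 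Once the spanning is established, the dimension count of Step~1 forces the intersection $\frakbb_k\cap\frakgg_{k-1}$ to be trivial for $1\le k<n$ and exactly one-dimensional for $n\le k<2n$; a concrete generator of the latter arises, via the module action, from the relation $tp_n = uq_{n-1}$ in $\Val^{U(n)}$ (see \eqref{eq_piqi} modulo the Fu relation $f_{n+1}$), anticipating the submodule $I_n$ of Theorem~\ref{thm_main}. The principal difficulty is exactly this Step~3: verifying that the $\Val^{U(n)}$-orbit of $C(\Gamma_{2n-2, n-1})$ exhausts the image of $C$ in each degree, which requires a combinatorial check tracking how $\widehat{t}\,*\,$ and $\widehat{s}\,*\,$ propagate centroids using the explicit differential-form data at hand.
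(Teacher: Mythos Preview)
Your Steps~1--2 are correct and give a pleasant reformulation: the identification $\frakbb_k=\ker\bigl(C\colon\Area_{2n-k-1}^{U(n)}\to\Vector_{2n-k-1}^{U(n)}\bigr)$ via $\frakbb(\phi)=\tfrac12\delta(\widehat\phi)$ together with Theorem~\ref{prop_dimker} is exactly right, and the dimension count in Step~1 is the same one the paper uses (in reverse order) to deduce the direct-sum and intersection statements once spanning is known.

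The gap is Step~3. You reduce everything to showing that $C(\frakgg_{k-1})$ fills the image of $C$ in degree $2n-k-1$, but you do not carry this out; you only indicate that it ``requires a combinatorial check.'' The trouble is that the check you propose lives in $\Vector^{U(n)}$, whose $\Val^{U(n)}$-module structure is nowhere computed in the paper: Theorem~\ref{thm_dim} gives only dimensions, and Lemma~\ref{lem_magic}(1) gives you $C(\frakgg(u^i))$ only implicitly (via the $\Delta_{2m,m}$, whose centroids are again not computed). So to finish your argument you would have to develop the action of $\widehat t,\widehat s$ on $\Vector^{U(n)}$ from scratch and then run an inductive surjectivity argument there---at least as much work as the direct route.

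The paper avoids this by staying inside $\Area^{U(n)}$. Using the explicit formulas of Propositions~\ref{prop_gammasubmod}, \ref{prop_taction}, \ref{prop_saction}, it shows that for every basis element $B_{k,q}$ (respectively $\Gamma_{k,q}$) with $k\le 2n-3$ one can solve $\Psi=\widehat t*\Psi_1+\widehat s*\Psi_2$: concretely, the $2\times2$ coefficient matrix expressing $\widehat t*B_{k+1,q}$ and $\widehat s*B_{k+2,q+1}$ (or, at the boundary $q=\max\{0,k-n\}$, the pair $\widehat t*B_{k+1,q+1}$ and $\widehat s*B_{k+2,q+2}$) in terms of $B_{k,q-1},B_{k,q}$ modulo $\spn\{\Gamma\}$ is invertible. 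Induction then yields $\Area_{2n-k-1}^{U(n)}=\frakbb_k+\frakgg_{k-1}$, and your Step~1 dimension count finishes. If you want to salvage your outline, the quickest fix is to replace Step~3 by this elementary $2\times2$ invertibility check on the $B$'s (the $\Gamma$'s being already handled by Lemma~\ref{lem_bgdim}); the centroid map then plays no role in the spanning argument.
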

\begin{proof}
We will first show that for any $\Psi\in\Area_k^{U(n)}$, $0\leq k\leq 2n-3$, there exist $\Psi_1\in\Area_{k+1}^{U(n)}$ and $\Psi_2\in\Area_{k+2}^{U(n)}$ such 
that
\begin{equation}\label{eq_tsspan}
	\Psi=\hat t*\Psi_1+\hat s*\Psi_2.
\end{equation}
It will be sufficient to prove this for $\Psi=B_{k,q}$ and $\Psi=\Gamma_{k,q}$, since these measures constitute a basis.
We have already proved in Lemma~\ref{lem_bgdim} that the image of the map $\frakgg$ equals the span of the measures $\{\Gamma_{k,q}\}$.
This immediately implies  \eqref{eq_tsspan} for $\Psi=\Gamma_{k,q}$.

We turn now to the case $\Psi=B_{k,q}$. Clearly, since $k \leq 2n-3$, $B_{k+1,q}$ and $B_{k+2,q+1}$ exist if $B_{k,q}$ does and $\max\{0,k-n\}< q$. 
Then by Proposition~\ref{prop_taction} and Proposition~\ref{prop_saction} there are positive numbers $a_{ij}>0$ such 
\begin{align*}
	\hat t*B_{k+1,q} &= a_{11} B_{k,q-1}+a_{12}B_{k,q}\qquad\text{modulo}\ \spn\{\Gamma_{k,q}\},\\
   	\hat s*B_{k+2,q+1} &= a_{21} B_{k,q-1}+a_{22}B_{k,q}.
\end{align*}
It is not difficult to check that the matrix $(a_{ij})$ is non-singular and hence we find numbers $c_1$ and $c_2$ such that
$$B_{k,q}= c_1 \hat t*B_{k+1,q} +c_2\hat s*B_{k+2,q+1}\qquad\text{modulo}\  \spn\{\Gamma_{k,q}\}.$$
If $\max\{0,k-n\}=q$, then $B_{k+1,q+1}$ and $B_{k+2,q+2}$ exist if $B_{k,q}$ does. As before there is a non-singular matrix $(a_{ij})$ such that 
\begin{align*}
	\hat t*B_{k+1,q+1} &= a_{11} B_{k,q}+a_{12}B_{k,q+1}\qquad\text{modulo}\ \spn\{\Gamma_{k,q}\},\\
   	\hat s*B_{k+2,q+2} &= a_{21} B_{k,q}+a_{22}B_{k,q+1}
\end{align*}
and we deduce that there are numbers $c_1$ and $c_2$ such that
$$B_{k,q}= c_1 \hat t*B_{k+1,q+1} +c_2\hat s*B_{k+2,q+2}\qquad\text{modulo}\  \spn\{\Gamma_{k,q}\}.$$
Since we have already proved (\ref{eq_tsspan}) if $\Psi$ is a linear combination of some $\Gamma_{k,q}$, we conclude that (\ref{eq_tsspan}) also holds true for $\Psi=B_{k,q}$. 

By induction on $k$ we obtain from the above that 
$$\Area^{U(n)}_{2n-k-1}=\frakbb_k + \frakgg_{k-1}$$
for $1\leq k<2n$. From \eqref{eq_dimbb}, \eqref{eq_dimgg}, and Corollary~\ref{cor_dimarea}, we conclude that for dimensional reasons $\frakbb_k\cap \frakgg_{k-1}$ is $0$-dimensional if $1\leq k<n$ and $1$-dimensional if $n\leq k<2n$.
\end{proof}

\begin{lemma}\label{lem_kernel}

$$\frakbb(p_n)=\frakgg(q_{n-1})\qquad \text{and}\qquad \frakgg(p_n)=0$$

\end{lemma}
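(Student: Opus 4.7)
The plan is to establish the two assertions separately. For the first, $\frakgg(p_n) = 0$, I would use that $\frakgg$ takes values in $\spn\{\Gamma_{k,q}\}$ by Proposition~\ref{prop_gammasubmod}, and that $\glob$ is injective on this subspace by \eqref{eq_globarea} (since the $\mu_{k,q}$ are linearly independent). Hence it suffices to show $\glob(\frakgg(p_n)) = 0$. Combining the module property of $\glob$, the identity $\mu_{2n-2,n-1} = \frac{\pi}{2}\hat u$ (from \eqref{eq_uvaluation} and \eqref{eq_fintrinsic}), and \eqref{eq_fourierhom},
\[
\glob(\frakgg(p_n)) \;=\; \hat{p_n} * \mu_{2n-2,n-1} \;=\; \tfrac{\pi}{2}\,\widehat{p_n\cdot u},
\]
which vanishes because $p_n\cdot u = 0$ in $\Val^{U(n)}$ by Proposition~\ref{lem_rel}.

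For the second assertion, $\frakbb(p_n) = \frakgg(q_{n-1})$, both sides lie in $\Area^{U(n)}_{n-1}$. The analogous globalization computation, now using $\mu_{2n-1,n-1} = \mu_{2n-1} = \frac{\pi}{2}\hat t$, yields
\[
\glob(\frakbb(p_n)) = \tfrac{\pi}{2}\,\widehat{p_n\cdot t},\qquad \glob(\frakgg(q_{n-1})) = \tfrac{\pi}{2}\,\widehat{q_{n-1}\cdot u}.
\]
Relation \eqref{eq_piqi} together with $f_{n+1} = 0$ in $\Val^{U(n)}$ gives $p_n\cdot t = q_{n-1}\cdot u$ in $\Val^{U(n)}$, so the two globalizations coincide. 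Since $\frakgg(q_{n-1}) \in \spn\{\Gamma_{k,q}\}$ and $\glob$ is injective on $\spn\{\Gamma_{k,q}\}$, the desired identity will follow as soon as I verify that $\frakbb(p_n)$ also lies in $\spn\{\Gamma_{k,q}\}$.

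This last step is the main obstacle; I plan to handle it by direct computation with Lemma~\ref{lem_magic}(2). In the even case $n = 2m$, the polynomial $p_{2m}$ is a sum of $t^{2(m-j)}u^j$ for $j = 0,\dots,m$, so Lemma~\ref{lem_magic}(2) applies termwise; after interchanging the order of summation, the coefficient of $B_{2m-1,\,m-k'-1}$ in $\frakbb(p_{2m})$ modulo $\spn\{\Gamma_{k,q}\}$ is proportional to $\sum_{j=0}^{m-k'}(-1)^j\binom{m-k'}{j} = (1-1)^{m-k'}$, which vanishes for every admissible $k' \in \{0,\dots,m-1\}$. For odd $n = 2m+1$, I would factor $p_{2m+1} = t\tilde p$ with $\tilde p\in\RR[t^2,u]$ and use $\frakbb(t\tilde p) = \hat t * \frakbb(\tilde p)$ together with Proposition~\ref{prop_gammasubmod}, so that it is enough to show that the $B$-components of $\hat t * \frakbb(\tilde p)$ vanish. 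Computing $\frakbb(\tilde p)$ modulo $\spn\{\Gamma_{k,q}\}$ via Lemma~\ref{lem_magic}(2) and then applying Proposition~\ref{prop_taction}, this vanishing reduces to the identity $(2l-1)S_{l-1} + 2(m-l+1)S_l = 0$ for the alternating sums $S_l = \sum_j \frac{(-1)^j}{2m+1-2j}\binom{m-l}{j}$, which can be established from the Beta-integral representation $S_l = \tfrac{(-1)^{m-l}}{2}\,B(l+\tfrac{1}{2},\, m-l+1)$.
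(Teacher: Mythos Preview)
Your proposal is correct and follows essentially the same route as the paper: reduce $\frakgg(p_n)=0$ to $u\cdot p_n=0$ via injectivity of $\glob$ on $\spn\{\Gamma_{k,q}\}$, reduce $\frakbb(p_n)=\frakgg(q_{n-1})$ to \eqref{eq_piqi} once $\frakbb(p_n)\in\spn\{\Gamma_{k,q}\}$ is known, and verify the latter via Lemma~\ref{lem_magic}(2) with the same even/odd split. Your Beta-integral representation for $S_l$ is a clean explicit verification of the combinatorial identity that the paper merely asserts is ``not difficult to see'' (there written as $c_{q+1}+\frac{2(q+1)}{n-2q}c_q=0$, which is your identity after the substitution $l=m-q$).
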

\begin{proof}
Since the image of $\frakgg$ coincides with the span of the $\Gamma_{k,q}$ and the globalization map is injective on this subspace, 
the second assertion follows immediately from $\widehat{u}=\frac{2}{\pi} \mu_{2n-2,n-1}$ and Proposition~\ref{lem_rel}. 

To prove the first assertion we first show that 

\begin{equation}\label{eq_pnzero} \frakbb(p_n) \in \spn\{\Gamma_{k,q}\}. \end{equation}
Suppose that $n$ is even,  $n=2m$.  Using Lemma~\ref{lem_magic}(2) with $j=m-i$ yields 
\begin{equation}\label{eq_evenpart}\frac{\pi^{m}}{4^{m}m!} \binom{2m+1}{2i+1} \frakbb( t^{2m-2i}u^i)\equiv \sum_{k=0}^{m-i} (2k+1)\binom{m-k}{i} B_{n-1,m-k-1}\end{equation}
modulo $\spn\{\Gamma_{k,q}\}$. Multiplying the right-hand side by $(-1)^i$ and summing over $i$, we obtain
\begin{align*}
	\sum_{i=0}^m (-1)^i \sum_{k=0}^{m-i} &(2k+1)\binom{m-k}{i} B_{n-1,m-k-1}\\
&= \sum_{\substack{0\leq i,k\leq m\\ k\leq m-i}}(-1)^i(2k+1)\binom{m-k}{i} B_{n-1,m-k-1} \\
&= \sum_{k=0}^m\left(\sum_{i=0}^{m-k} (-1)^i \binom{m-k}{i}\right)(2k+1)B_{n-1,m-k-1}\\
&=0.
\end{align*}

Suppose now that $n=2m+1$. We put 
$$p'_n=\frac{(-1)^n}{2^n}\sum_{i=0}^{m}  (-1)^i \binom{n+1}{2i+1}t^{2m-2i}u^i$$
such that $tp'_n=p_n$. Using Lemma~\ref{lem_magic}(2) with $j=m-i$, we see that modulo $\spn\{\Gamma_{k,q}\}$,
\begin{align*}
	\frakbb(p'_n) &\equiv \frac{(-1)^n(n+1)m!}{2 \pi^m} \sum_{i=0}^m\sum_{k=0}^{m-i} (-1)^i\frac{2k+1}{n-2i}\binom{m-k}{i} B_{n,m-k}\\
&= \frac{(-1)^n(n+1)m!}{2\pi^m}\sum_{k=0}^m\left(\sum_{i=0}^{m-k}(-1)^i \frac{2k+1}{n-2i}\binom{m-k}{i}\right) B_{n,m-k}\\
&=\frac{(-1)^n(n+1)m!}{2\pi^m} \sum_{q=0}^m\left(\sum_{i=0}^{q}(-1)^i \frac{2(m-q)+1}{2(m-i)+1}\binom{q}{i}\right) B_{n,q}\\
&=\frac{(-1)^n(n+1)m!}{2\pi^m}\sum_{q=0}^mc_q B_{n,q},
\end{align*}
where $$c_q= \sum_{i=0}^{q}(-1)^i \frac{2(m-q)+1}{2(m-i)+1}\binom{q}{i}.$$
Hence, using Proposition~\ref{prop_taction}, we arrive at
$$\frakbb(p_n)=\frakbb(tp'_n)\equiv \frac{(-1)^n(n+1)m!\omega_{n+1}}{2\pi^{m+1}\omega_n}\sum_{q=0}^{m-1} (n-2q-1)\left(c_{q+1}+ \frac{2(q+1)}{n-2q}c_q\right) B_{n-1,q}, $$
modulo $\spn\{\Gamma_{k,q}\}$. It is not difficult to see that for each $q$ the expressions in brackets vanish and thus \eqref{eq_pnzero} follows.

Since we know now that both $\frakbb(p_n)$ and $\frakgg(q_{n-1})$ are contained in the span of the $\Gamma_{k,q}$, we conclude that $\frakbb(p_n)$ and $\frakgg(q_{n-1})$ are equal if and only of their images under the globalization map coincide. 
Since
$$\FF(\glob(\frakbb(p_n)))=\frac{\pi}{2} t\cdot p_n $$
and 
$$\FF(\glob(\frakgg(q_{n-1})))=\frac{\pi}{2} u\cdot q_{n-1},$$
it is sufficient to prove $ (4s-t^2) q_{n-1}-t p_n=0$ locally at $n$. This, however, follows immediately from \eqref{eq_piqi}.
\end{proof}

We are now ready to complete the proof of Theorem~\ref{thm_main}.

\begin{proof}[Proof of Theorem~\ref{thm_main}]
Consider the module homomorphism $h\colon \Val^{U(n)}\oplus\Val^{U(n)}\rightarrow \Area^{U(n)}$ given by
$$(p, q)\mapsto \frakbb(p)+\frakgg(q).$$
By Proposition \ref{prop_span}, $h$ is surjective. We denote by $I_n\subset \Val^{U(n)}\oplus  \Val^{U(n)}$ the submodule generated by 
$(p_n,-q_{n-1})$ and $ (0,p_n)$.  
From Lemma~\ref{lem_kernel} we conclude that $I_n\subset \ker h$. 

Suppose now that $h(p,q)=0$. Then either $\frakbb(p)=\frakgg(q)=0$ or $\frakbb(p)=-\frakgg(q)\neq 0$. In the first case, we deduce from Proposition~\ref{lem_rel}, Lemma~\ref{lem_bgdim},
 and Lemma~\ref{lem_kernel} that $(p,q)\in I_n$. In the other case, $\frakbb(p)=-\frakgg(q)$ is a non-trivial element of the intersection of $\img \frakbb$ and $\img\frakgg$. 
From Proposition~\ref{prop_span} and Lemma~\ref{lem_kernel}, we deduce that $(p,q)\in I_n$. Hence $\ker h= I_n$.
\end{proof}

\subsection{Angular area measures}

Let $\Angular^{U(n)}\subset \Area^{U(n)}$ denote the subspace of angular, unitarily invariant area measures. The subspace of angular area measures has played an important role in our analysis; now we aim to give a description of it in terms of the module structure. We will see below that $\Angular^{U(n)}$ is not a submodule and that it is not invariant under the action of neither $\widehat{t}$, $\widehat{s}$, nor $\widehat{u}$.

\begin{lemma}\label{lem_bgangular}
Suppose $i>0$ and $j\geq 0$ are  integers. Then the area measure
\begin{equation}
\label{eq_bgangular}
(j+1)\frakbb(u^it^j)+2i(2i+j+2)\frakgg(u^{i-1}t^{j+1})\end{equation}
is angular. 
\end{lemma}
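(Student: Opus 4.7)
The plan is to verify angularity by an explicit computation in the basis $\{B_{k,q}\}\cup\{\Gamma_{k,q}\}$ of $\Area^{U(n)}$ from Proposition~\ref{prop_basis}. By Lemma~\ref{lem_deltaangular}, angularity is equivalent to lying in $\spn\{\Delta_{k,q}\}$; using $\Delta_{k,q}=\tfrac{k-2q}{2n-k}B_{k,q}+\tfrac{2(n-k+q)}{2n-k}\Gamma_{k,q}$, this translates to the requirement that, at every index $(k,q)$, the $\Gamma_{k,q}$-coefficient equal $\tfrac{2(n-k+q)}{k-2q}$ times the $B_{k,q}$-coefficient. The key structural simplification is Lemma~\ref{lem_bgdim}, which says $\img\frakgg\subset\spn\{\Gamma_{k,q}\}$. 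Consequently the $B$-part of the target expression comes entirely from $(j+1)\frakbb(u^it^j)$, while $\frakgg(u^{i-1}t^{j+1})$ contributes only to the $\Gamma$-part, reducing the problem to matching $\Gamma$-coefficients with the prescribed ratio.

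First I would expand $\frakbb(u^it^j)$ modulo $\spn\{\Gamma_{k,q}\}$: for even $j=2\ell$ this is exactly the content of Lemma~\ref{lem_magic}(2); for odd $j$ I would apply $\hat t*$ once more to the even-$j$ formula, noting via Proposition~\ref{prop_gammasubmod} that $\hat t$ preserves $\spn\{\Gamma_{k,q}\}$ (so the congruence mod Gammas is preserved) while Proposition~\ref{prop_taction} determines its action on the $B$-terms. Next I would compute $\frakgg(u^{i-1}t^{j+1})\in\spn\{\Gamma_{k,q}\}$. Because $\glob$ is injective on that subspace (its image is spanned by the linearly independent hermitian intrinsic volumes), it suffices to identify its globalization, which by the module property and Fourier duality equals $\widehat{u^{i-1}t^{j+1}}*\mu_{2n-2,n-1}=\tfrac{\pi}{4}\widehat{u^{i-1}t^{j+3}}$ (using $\mu_{2,1}=\tfrac{\pi}{4}t^2$ and $\FF(\phi\cdot\psi)=\FF\phi*\FF\psi$), so one reads off the $\Gamma$-coefficients from the Fu-basis expansion of $\widehat{u^{i-1}t^{j+3}}$.

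With both expansions in hand, the remaining task is to verify the coefficient-ratio identity at each index $(k,q)$. The base case $(i,j)=(1,0)$ is a direct check: Lemma~\ref{lem_magic}(1) gives $\frakbb(u)=\tfrac{4}{\pi}\bigl(B_{2n-3,n-2}-\tfrac{2}{3}\Gamma_{2n-3,n-2}\bigr)$, while Proposition~\ref{prop_gammasubmod} applied to $\Gamma_{2n-2,n-1}$ yields $8\frakgg(t)=\tfrac{32}{3\pi}\Gamma_{2n-3,n-2}$, whose sum is $\tfrac{4}{\pi}B_{2n-3,n-2}+\tfrac{8}{\pi}\Gamma_{2n-3,n-2}=\tfrac{12}{\pi}\Delta_{2n-3,n-2}$, confirming angularity. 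The general case reduces to a binomial identity matching the explicit sum in Lemma~\ref{lem_magic}(2) against the Fourier-expanded $\widehat{u^{i-1}t^{j+3}}$. The main obstacle is that this identity is not transparent from Lemma~\ref{lem_magic}(2) alone; it is most cleanly derived from the generating-function relation $q_{k-2}=4\,\partial p_k/\partial u$ between the polynomials governing $\frakbb$- and $\frakgg$-type contributions, which explains why the particular factors $(j+1)$ and $2i(2i+j+2)$ are precisely what is needed to align the $B$- and $\Gamma$-parts into the angular ratio.
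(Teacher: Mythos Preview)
Your plan has a real gap. The ratio test you set up is correct: an element $\sum a_{k,q}B_{k,q}+\sum b_{k,q}\Gamma_{k,q}$ lies in $\spn\{\Delta_{k,q}\}$ iff $b_{k,q}=\tfrac{2(n-k+q)}{k-2q}\,a_{k,q}$ at every interior index. But to apply it you need the \emph{full} $\Gamma$-part of the expression, and that has two summands:
\[
\bigl[\Gamma\text{-part of }(j+1)\,\frakbb(u^it^j)\bigr]\;+\;2i(2i+j+2)\,\frakgg(u^{i-1}t^{j+1}).
\]
You compute the second term (correctly, via globalization) but never compute the first. Lemma~\ref{lem_magic}(2) gives $\frakbb(u^it^j)$ only \emph{modulo} $\spn\{\Gamma_{k,q}\}$; the $\Gamma$-remainder is nonzero (already for $j=0$ it is $-\tfrac{2i}{2i+1}\cdot\tfrac{4^ii!}{\pi^i}\Gamma_{2n-2i-1,n-i-1}$ by Lemma~\ref{lem_magic}(1)), and it is precisely what has to combine with the $\frakgg$-term to hit the angular ratio. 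Your final paragraph's appeal to the identity $q_{k-2}=4\,\partial p_k/\partial u$ does not supply this missing piece; that relation lives in $\Val^{U(n)}$ and governs globalizations, whereas what is needed is the honest $\Gamma$-coefficients of $\frakbb(u^it^j)$ in $\Area^{U(n)}$.

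The paper's proof avoids this difficulty by changing basis from the start. Instead of tracking $B$- and $\Gamma$-coefficients separately, one proves by induction (parallel to the proof of Lemma~\ref{lem_magic}) closed formulas for $\frakbb(t^{2m}u^i)$ and $\frakgg(t^{2m+1}u^{i-1})$ in the $\{\Delta_{k,q}\}\cup\{N_{k,q}\}$ basis. In that basis angularity is simply the vanishing of all $N$-coefficients, and the two explicit formulas make it visible that the specific combination $(j+1)\frakbb(u^it^j)+2i(2i+j+2)\frakgg(u^{i-1}t^{j+1})$ cancels the $N$-terms; the even-$j$ case drops out immediately, and the odd-$j$ case follows by one more application of $\hat t*$ via Proposition~\ref{prop_taction}. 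If you want to salvage your route, you would have to upgrade Lemma~\ref{lem_magic}(2) to a full expansion (not just mod $\spn\{\Gamma_{k,q}\}$), which amounts to the same inductive computation in disguise.
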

\begin{proof}
 The subspace of angular area measures coincides by Lemma~\ref{lem_deltaangular} with the span of the $\Delta_{k,q}$. Hence it is sufficient to prove that the measure \eqref{eq_bgangular} is an element of the latter subspace. As in the proof of Lemma~\ref{lem_magic}, one can check by induction that for $i>0$ and $m\geq 0$ there exist constants $c_{im}$ such that
\begin{align*}
c_{im} \frakbb(& t^{2m}u^i) = (2m+1)\sum^{\min(m,n-i-m-1)}_{k=0} \binom{i+m-k}{i} \Delta_{2(n-i-m)-1,n-i-m-k-1}\\
&-2(i+m+1) \sum^{\min(m,n-i-m-1)}_{k=0} (2k+1)\binom{m+i-1-k}{i-1} N_{2(n-i-m)-1,n-i-m-k-1}
\end{align*}
and 
\begin{align*}
c_{im} \frakgg(&t^{2m+1}u^{i-1}) = (2m+1)\sum^{\min(m,n-i-m-1)}_{k=0} \binom{i+m-k}{i} \Delta_{2(n-i-m)-1,n-i-m-k-1}\\
& + \frac{2m+1}{2i}\sum^{\min(m,n-i-m-1)}_{k=0} (2k+1)\binom{m+i-1-k}{i-1} N_{2(n-i-m)-1,n-i-m-k-1}
\end{align*}
The constants $c_{im}$ are explicitly given by 
$$c_{im}=\frac{(2i+1)\pi^{i+m}}{4^{i+m}(i+m)!}\binom{2i+2m+1}{2m}.$$
From this we deduce that \eqref{eq_bgangular} holds if $j=2m$. Using the above formulas for $\frakbb(t^{2m}u^i)$ and $\frakgg(t^{2m+1}u^{i-1})$, a simple, but long and tedious calculation shows that \eqref{eq_bgangular} holds true also for $j=2m+1$. 
\end{proof}

In the following we denote by $\RR[t,u]$ the graded polynomial algebra of the variables $t$ and $u$ with formal degrees $\deg t =1$ and $\deg u=2$. 

\begin{theorem}

\begin{enumerate}
\item The image of the map $A\colon \RR[t,u] \times \RR[u]	\rightarrow	\Area^{U(n)}$ given by
		$$A(p,q)= \frakbb\left(\left[t\frac{\partial }{\partial t} +1\right]p\right)+\frakgg\left( 2t\frac{\partial}{\partial u}\left[ t\frac{\partial}{\partial t} + 2u \frac{\partial}{\partial u}+2\right]p+q \right)$$
coincides with the subspace of angular area measures.

\item Suppose $p,q\in \RR[t,u]$. If 
\begin{equation}\label{eq_angularitycond}
\frac{\partial}{\partial u} \left[ t\frac{\partial}{\partial t} + 2u\frac{\partial}{\partial u} +2\right] p =\frac{1}{2} \frac{\partial q}{\partial t},
\end{equation}
then 
$$\frakbb(p)+ \frakgg(q) \in \Angular^{U(n)}\qquad \text{for every}\quad  n\geq1.$$
Conversely, if $$\frakbb(p)+ \frakgg(q) \in \Angular^{U(n)}$$ for some $n>\deg p, 1+ \deg q$, then \eqref{eq_angularitycond} holds.

\end{enumerate}

\end{theorem}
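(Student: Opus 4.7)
The two parts are tightly coupled: the map $A$ in (1) is, by construction, the parameterization by $(p,q)\in\RR[t,u]\times\RR[u]$ of exactly those pairs $(\tilde p,\tilde q)=([t\partial_t+1]p,\,2t\partial_u[t\partial_t+2u\partial_u+2]p+q)\in\RR[t,u]^{\oplus 2}$ for which the PDE of (2) automatically holds. This is verified by a short calculation using $\partial_t q=0$ (since $q\in\RR[u]$), the product rule on $\partial_t(2t\cdot\,\bullet\,)$, and $[\partial_u,t\partial_t]=0$; the PDE then reads $2\partial_u[t\partial_t+2u\partial_u+2]p=\partial_t(2t\partial_u[\ldots]p)=\partial_t(\tilde q-q)$. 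The plan is to prove the forward direction of (2) computationally, obtain $\mathrm{im}\,A\subset\Angular^{U(n)}$ as a direct corollary, and then treat surjectivity together with the converse of (2).

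For the forward implication, write $p=\sum a_{ij}u^it^j$ and $q=\sum b_{IJ}u^It^J$; the PDE is equivalent to the coefficient relation $Jb_{IJ}=2(I+1)(J+2I+3)a_{I+1,J-1}$ for $J\geq 1$, leaving the $b_{I,0}$ free. Substituting $b_{i-1,j+1}=\tfrac{2i(j+2i+2)}{j+1}a_{ij}$ and grouping pairwise,
\begin{equation*}
\frakbb(p)+\frakgg(q)=\sum_{j\geq 0}a_{0j}\frakbb(t^j)+\sum_{I\geq 0}b_{I,0}\frakgg(u^I)+\sum_{i\geq 1,\,j\geq 0}\tfrac{a_{ij}}{j+1}\bigl[(j+1)\frakbb(u^it^j)+2i(j+2i+2)\frakgg(u^{i-1}t^{j+1})\bigr].
\end{equation*}
Each summand of the third sum is angular by Lemma~\ref{lem_bgangular}; the second sum is angular since Lemma~\ref{lem_magic}(1) gives $\frakgg(u^I)\in\RR\cdot\Delta_{2(n-I-1),n-I-1}$; and the first sum is angular after one verifies the $i=0$ specialization, i.e.\ that $\frakbb(t^j)\in\Angular^{U(n)}$. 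For $j=2m$ this follows from the first display in the proof of Lemma~\ref{lem_bgangular}, whose $N$-component carries the coefficient $\binom{m-1-k}{-1}=0$; the odd case $j=2m+1$ is handled by the parallel "tedious" computation alluded to there. Specialising to $(p,q)=A(p',q')$ (which satisfies the PDE) yields $\mathrm{im}\,A\subset\Angular^{U(n)}$ for part (1).

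For surjectivity of $A$ I would argue by a descending induction on the second index of $\Delta_{k,q}$: the diagonal cases $k=2q$ are obtained from $A(0,u^{n-1-q})=\frakgg(u^{n-1-q})$, while the explicit expansion of $A(u^it^j,0)$ in the $\Delta$-basis---produced by adding the two formulas from the proof of Lemma~\ref{lem_bgangular}, whose $N$-components cancel via the algebraic identity $(2j+1)+4i(i+j+1)=(2i+1)(2i+2j+1)$---yields the off-diagonal $\Delta_{k,q}$ triangularly. Once surjectivity is established, the converse of (2) follows: if $n>\deg p,\,1+\deg q$ and $\Psi=\frakbb(p)+\frakgg(q)\in\Angular^{U(n)}$, write $\Psi=A(p',q')$ so that $\frakbb(p-[t\partial_t+1]p')+\frakgg(q-2t\partial_u[t\partial_t+2u\partial_u+2]p'-q')=0$ in $\Area^{U(n)}$. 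By Theorem~\ref{thm_main} the kernel submodule $I_n$ is generated by pairs of degrees $\geq n$, so it meets any pair of polynomials of degree $<n$ trivially; this forces $p=[t\partial_t+1]p'$ and $q=2t\partial_u[t\partial_t+2u\partial_u+2]p'+q'$ as polynomials, and the PDE then drops out of the opening calculation.

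The main obstacle is the surjectivity step, because polynomial representatives of an angular measure are defined only modulo $I_n$ and it is not a priori clear that a PDE-respecting representative can always be chosen. Carrying out the triangular induction on $\Delta_{k,q}$ requires bookkeeping in the unstable dimensions where some $B_{k,q}$ or $\Gamma_{k,q}$ (hence some $\Delta_{k,q}$) drop out; the explicit coefficients from Lemma~\ref{lem_bgangular}, together with the dimension count from Corollary~\ref{cor_dimarea} and Proposition~\ref{prop_basis}, supply exactly what is needed to close the induction.
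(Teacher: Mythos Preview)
Your forward implication of (2) and the inclusion $\operatorname{im}A\subset\Angular^{U(n)}$ are handled correctly, and your converse argument via Theorem~\ref{thm_main} (degrees of the generators of $I_n$) is equivalent to what the paper does with Proposition~\ref{prop_span}. The one real divergence is surjectivity in part~(1). The paper avoids your triangular induction entirely by observing that $\glob$ restricted to $\spn\{\Delta_{k,q}\}=\Angular^{U(n)}$ is injective (since $\glob(\Delta_{k,q})=\mu_{k,q}$), so it suffices to show $\glob\circ A$ surjects onto $\bigoplus_{k=0}^{2n-1}\Val_k^{U(n)}$. This is immediate from the two identities
\[
\glob\circ A(t^ju^i,0)=\bigl((j{+}1)+2i(2i{+}j{+}2)\bigr)\widehat t^{\,j+1}\widehat u^{\,i},\qquad
\glob\circ A(0,u^i)=\tfrac{\pi}{2}\widehat u^{\,i+1},
\]
together with the fact that $\widehat t,\widehat u$ generate $(\Val^{U(n)},*)$. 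This also gives the degree control you need for the converse of (2) for free, since the input $(t^ju^i,0)$ has degree $j+2i$ while the output sits in $\Val_{j+2i+1}^{U(n)}$.

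Your triangular induction would work, but it is more laborious than necessary: you must treat odd and even first index separately (the odd-$j$ formulas are only alluded to in the paper), and you must track the unstable range $k\geq n$ where some $\Delta_{k,q}$ are missing. What your approach buys is an explicit expansion of each $A(u^it^j,0)$ in the $\Delta$-basis; what the paper's approach buys is a two-line proof that sidesteps all of this bookkeeping. The ``main obstacle'' you flag at the end is genuinely an obstacle for your route and simply does not arise in the paper's.
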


\begin{proof} 
Using Lemma~\ref{lem_bgangular} and Lemma~\ref{lem_magic}(1), we see that the image of $A$ is contained in the subspace of angular area measures.  To prove surjectivity, it is sufficient to show that the composed map $\glob\circ A$ maps onto $\bigoplus^{2n-1}_{k=0}\Val_k^{U(n)}$. This follows immediately from
\begin{align*}\glob\circ A(t^ju^i,0)& =\big((j+1)+2i(2i +j+2)\big) \widehat{t}^{j+1}\widehat{u}^i,\\
    \glob\circ A(0,u^i) &= \frac{\pi}{2} \widehat{u}^{i+1},
\end{align*}
and the fact that the algebra $(\Val^{U(n)}, \;*\;)$ is generated by $\widehat{t}$  and $\widehat{u}$.

If $p,q\in \RR[t,u]$ are such that \eqref{eq_angularitycond} holds, then Lemma~\ref{lem_bgangular} implies that $\frakbb(p)+ \frakgg(q) \in \Angular^{U(n)}$ for every $n\geq 1$. Conversely, suppose that $\frakbb(p)+ \frakgg(q) \in \Angular^{U(n)}$ for some $n$ with $n>\deg p, 1+\deg q   $. By the surjectivity of $A$ we find  $p_1\in \RR[t,u]$ and $p_2\in \RR[u]$ with $n>\deg p_1, 1+\deg q_2 $ such that
$$\frakbb(p)+ \frakgg(q)=A(p_1,p_2).$$
In particular, 
$$\frakbb\bigg(p - \left[t\frac{\partial }{\partial t} +1\right]p_1\bigg)=\frakgg\bigg(2t\frac{\partial}{\partial u}\left[ t\frac{\partial}{\partial t} + 2u \frac{\partial}{\partial u}+2\right]p_1+p_2-q\bigg).$$
From Proposition~\ref{prop_span}, we deduce that 
$$p=\left[t\frac{\partial }{\partial t} +1\right]p_1$$
and 
$$q = 2t\frac{\partial}{\partial u}\left[ t\frac{\partial}{\partial t} + 2u \frac{\partial}{\partial u}+2\right]p_1+p_2.$$
It is now easy to check that $p$ and $q $ satisfy \eqref{eq_angularitycond}.

\end{proof}

\subsection{Area and curvature measures}\label{sec_areacurv}

Using the explicit description of the module of unitarily invariant area measures, we try to shed some light on the relations between area and curvature measures. 

\begin{proposition}\label{prop_areacurv}
Let $\omega\in \Omega^{n-1}(SV)$ be translation-invariant and let $\Psi_\omega\in\Area$ and $\Phi_\omega\in\Curv$ denote the area and curvature measure represented by $\omega$. If $K\in\calK^{sm}$, then the Gauss map  $\nu:\partial K\rightarrow S(V)$ is a bijection and
$$\Psi_\omega(K)\ \text{ is the pushforward measure of}\ \Phi_\omega(K)$$
under the Gauss map. Moreover, the assignment $\Psi_\omega\mapsto \Phi_\omega$ is a well-defined, linear injection from $\Area$ into $\Curv$ with a $1$-dimensional cokernel.
\end{proposition}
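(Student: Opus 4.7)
The proposition splits naturally into three claims: the pushforward identity on smooth bodies, well-definedness and injectivity of $\Psi_\omega\mapsto\Phi_\omega$, and the computation of the cokernel. For a body $K\in\calK^{sm}$ the principal curvatures are positive, so the Gauss map $\nu\colon\partial K\to S(V)$ is a diffeomorphism, and the normal cycle is just the graph $N(K)=\{(x,\nu(x)):x\in\partial K\}$. The plan for the pushforward identity is to parametrize $N(K)$ by $\partial K$ through $\iota(x)=(x,\nu(x))$ and to note that $\pi_1\circ\iota=\id_{\partial K}$ and $\pi_2\circ\iota=\nu$. Then for any Borel $A\subset S(V)$,
\begin{equation*}
\Psi_\omega(K,A)=\int_{N(K)\cap\pi_2^{-1}(A)}\omega=\int_{\nu^{-1}(A)}\iota^*\omega=\int_{N(K)\cap\pi_1^{-1}(\nu^{-1}(A))}\omega=\Phi_\omega(K,\nu^{-1}(A)),
\end{equation*}
which is precisely $\Psi_\omega(K)=\nu_*\Phi_\omega(K)$.

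Second, I would use the pushforward identity to show that the kernels of the two integration maps $\omega\mapsto\Psi_\omega$ and $\omega\mapsto\Phi_\omega$ from $\Omega^{n-1}(SV)^{tr}$ coincide, which gives both well-definedness and injectivity of $\Psi_\omega\mapsto\Phi_\omega$. Suppose $\Psi_\omega=0$. Then for every $K\in\calK^{sm}$ and every Borel $A\subset S(V)$ we have $\Phi_\omega(K,\nu^{-1}(A))=0$, and since $\nu$ is a bijection onto $S(V)$, the sets $\nu^{-1}(A)$ exhaust the Borel subsets of $\partial K$. Because $\pi_1(N(K))\subset\partial K$, the measure $\Phi_\omega(K,\cdot)$ is concentrated on $\partial K$, so $\Phi_\omega(K,\cdot)=0$ on all Borel subsets of $V$. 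Continuity of the curvature measure in $K$ (it is a valuation with values in signed measures, as for area measures in the remark after the definition) and density of $\calK^{sm}$ in $\calK$ then give $\Phi_\omega=0$ on all convex bodies. The reverse implication uses the same pushforward identity: $\Phi_\omega=0$ forces $\Psi_\omega(K,A)=\Phi_\omega(K,\nu^{-1}(A))=0$ for $K\in\calK^{sm}$, and one concludes again by continuity and density.

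Third, for the cokernel I would argue that $\Curv$ is spanned by the form-curvature measures together with the single extra measure $(K,A)\mapsto\vol_n(K\cap A)$, which is part of the definition recalled in the paper. The image of $\Psi_\omega\mapsto\Phi_\omega$ is exactly the span of the form-curvature measures, so the cokernel is at most one-dimensional. To see it is not zero, observe that $\vol_n(K\cap A)$ cannot be written in the form $\int_{N(K)\cap\pi_1^{-1}(A)}\omega$: taking $K$ with nonempty interior and $A\subset\inter K$, the right-hand side vanishes (since $N(K)$ projects into $\partial K$) while $\vol_n(K\cap A)>0$. Hence the cokernel is precisely one-dimensional, spanned by $\vol_n(\cdot\cap\cdot)$.

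The only subtle point is the continuity/density step in the second paragraph: one must make sure that a smooth translation-invariant form $\omega$ yields a curvature measure $\Phi_\omega$ for which $K\mapsto\Phi_\omega(K,\cdot)$ is weakly continuous in $K$, so that vanishing on $\calK^{sm}$ forces vanishing everywhere. This is the curvature-measure analog of the fact invoked for area measures earlier in the paper and follows from \cite{alesker_fu08}*{Corollary 2.1.10}; it is the one place where a reference is really needed, while everything else is just bookkeeping with the Gauss map and the definition of $\Curv$.
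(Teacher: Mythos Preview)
Your argument is correct. The pushforward identity on $\calK^{sm}$, the reduction to equality of kernels, and the cokernel computation are all sound; the weak continuity of $K\mapsto\Phi_\omega(K)$ (and of $K\mapsto\Psi_\omega(K)$) follows from the weak continuity of $K\mapsto N(K)$ as a current, so the density step goes through.

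The paper, however, takes a shorter and purely algebraic route for well-definedness and injectivity. Instead of your analytic argument (pushforward on smooth bodies plus continuity and density), it invokes the known description of the kernels of the two integration maps $\Omega^{n-1}(SV)^{tr}\to\Area$ and $\Omega^{n-1}(SV)^{tr}\to\Curv$: both kernels equal the ideal $(\alpha,d\alpha)$ generated by the contact form and its differential (cf.\ the remark after the definition of $\Area$, citing \cite{fu11}*{Proposition 3.6}). Once one knows that $\Psi_{\omega_1}=\Psi_{\omega_2}$ iff $\omega_1-\omega_2\in(\alpha,d\alpha)$ iff $\Phi_{\omega_1}=\Phi_{\omega_2}$, both well-definedness and injectivity are immediate, with no need to touch a single convex body. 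Your approach has the advantage of being self-contained and of deriving the coincidence of the two kernels as a by-product rather than importing it; the paper's approach is cleaner once that kernel description is available. The paper dismisses the pushforward identity and the cokernel as ``clear''; your explicit argument that $\vol_n(\,\cdot\,\cap\,\cdot\,)$ is not a form-curvature measure (by testing on $A\subset\inter K$) is a useful complement.
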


\begin{proof}
We only prove that  $\Psi_\omega\mapsto \Phi_\omega$ is well-defined and injective; the rest is clear. If two forms $\omega_1$ and $\omega_2$ represent the same area measure $\Psi_{\omega_1}=\Psi_{\omega_2}$, then $\omega_1-\omega_2 $ is contained in the ideal $(\alpha, d\alpha)$. Hence also  $\Phi_{\omega_1}=\Phi_{\omega_2}$ which proves that the map is well-defined. Similarly, if $\Phi_\omega=0$, then $\omega\in (\alpha, d\alpha)$ and thus $\Psi_\omega=0$. 
\end{proof}

\begin{remark}
Using the explicit description of the modules of unitarily invariant area and curvature measures (see \cite{bernig_etal12}), one can show that if $n>1$, then there exists no injective linear map  
$$F\colon \Area^{U(n)}\rightarrow \Curv^{U(n)}$$ 
satisfying
$$F(\widehat\phi*\Psi)=\phi \cdot F(\Psi)$$
whenever $\phi\in\Val^{U(n)}$ and $\Psi \in\Area^{U(n)}$. The same conclusion holds if $\Curv^{U(n)}$ is replaced by
$\Curv^{U(n)}/(\Delta_{2n,n})$, $\Delta_{2n,n}(K,A)=\vol_{2n}(K\cap A)$. This suggests that a reasonable extension of Alesker's Fourier transform to a map from $\Area(V)$ into $\Curv(V)$ does not exist.
\end{remark}

\begin{bibdiv}
\begin{biblist}

\bib{abardia12}{article}{
   author={Abardia, J.},
   title={Difference bodies in complex vector spaces},
   journal={J. Funct. Anal.},
   volume={263},
   date={2012},
   number={11},
   pages={3588--3603},
}

\bib{abardia_bernig11}{article}{
   author={Abardia, J.},
   author={Bernig, A.},
   title={Projection bodies in complex vector spaces},
   journal={Adv. Math.},
   volume={227},
   date={2011},
   pages={830--846},
}

\bib{abardia_etal12}{article}{
   author={Abardia, J.},
   author={Gallego, E.},
   author={Solanes, G.},
   title={The Gauss-Bonnet theorem and Crofton-type formulas in complex
   space forms},
   journal={Israel J. Math.},
   volume={187},
   date={2012},
   pages={287--315},
}

\bib{aleksandrov37}{article}{
	title={Zur Theorie der gemischten Volumina von konvexen K\"orpern I.},
	author={Aleksandrov, A. D.},
	journal={Mat. Sbornik},
	language={Russian},
	volume={44},
	date={1937},
	pages={947--972},
}

\bib{alesker99}{article}{
   author={Alesker, S.},
   title={Continuous rotation invariant valuations on convex sets},
   journal={Ann. of Math. (2)},
   volume={149},
   date={1999},
   pages={977--1005},
}

\bib{alesker01}{article}{
    title={Description of translation invariant valuations on convex sets with solution of P. McMullen's conjecture},
    author={Alesker, S.},
    journal={Funct. Anal.},
    volume={11},
    date={2001},
    pages={244--272}
}

\bib{alesker03}{article}{
	title={Hard Lefschetz theorem for valuations, complex integral geometry, and unitarily invariant valuations},
	author={Alesker, S.},
	journal={J. Differential Geom.},
	volume={63},
	date={2003},
	pages={63--95}
}

\bib{alesker04}{article}{
	title={The multiplicative structure on continuous polynomial valuations}, 
	author={Alesker, S.},
	journal={Geom. Funct. Anal.},
	volume={14},
	date={2004},
	pages={1--26}
}

\bib{alesker06}{article}{
	title={Theory of valuations on manifolds I. Linear spaces},
	author={Alesker, S.},
	journal={Israel J. Math.},
	volume={156},
	date={2006},
	pages={311--339}
}

\bib{alesker06b}{article}{
   author={Alesker, S.},
   title={Theory of valuations on manifolds. II},
   journal={Adv. Math.},
   volume={207},
   date={2006},
   pages={420--454},
}

\bib{alesker07a}{article}{
   author={Alesker, S.},
   title={Theory of valuations on manifolds. IV. New properties of the
   multiplicative structure},
   conference={
      title={Geometric aspects of functional analysis},
   },
   book={
      series={Lecture Notes in Math.},
      volume={1910},
      publisher={Springer},
      place={Berlin},
   },
   date={2007},
   pages={1--44},
}

\bib{alesker07b}{article}{
   author={Alesker, S.},
   title={Theory of valuations on manifolds: a survey},
   journal={Geom. Funct. Anal.},
   volume={17},
   date={2007},
   pages={1321--1341},
}
		
\bib{alesker11}{article}{
	title={A Fourier-type transform on translation-invariant valuations on convex sets},
	author={Alesker, S.},
	journal={Israel J. Math.},
	volume={181},
	date={2011},
	pages={189--294}
}

\bib{alesker_fu08}{article}{
   author={Alesker, S.},
   author={Fu, J. H. G.},
   title={Theory of valuations on manifolds. III. Multiplicative structure
   in the general case},
   journal={Trans. Amer. Math. Soc.},
   volume={360},
   date={2008},

   pages={1951--1981},
}

\bib{alesker_etal}{article}{
	author={Alesker, S.},
	author={Bernig, A.},
	author={Schuster, F. E.},
   title={Harmonic analysis of translation invariant valuations},
   journal={Geom. Funct. Anal.},
   volume={21},
   date={2011},
   pages={751--773},
}
	
\bib{bernig09}{article}{
   author={Bernig, A.},
   title={A Hadwiger-type theorem for the special unitary group},
   journal={Geom. Funct. Anal.},
   volume={19},
   date={2009},
   pages={356--372},
}

\bib{bernig11}{article}{
   author={Bernig, A.},
   title={Integral geometry under $G_2$ and ${\rm Spin}(7)$},
   journal={Israel J. Math.},
   volume={184},
   date={2011},
   pages={301--316},
}

\bib{bernig12a}{article}{
	title={Algebraic integral geometry},
	author={Bernig, A.},
	book={
			title={Global Differential Geometry},	
			editor={B\"ar, C.},
			editor={Lohkamp, J.},
			editor={Schwarz, M.},
			publisher={Springer},
			date={2012}
			},			
}

\bib{bernig_broecker07}{article}{
	title={Valuations on manifolds and Rumin cohomology},
	author={Bernig, A.},
	author={Br\"ocker, L.},
	journal={J. Differential Geom.},
	volume={75},
	date={2007},
	pages={433--457}
}

\bib{bernig_fu06}{article}{
	title={Convolution of convex valuations},
	author={Bernig, A.},
	author={Fu, J. H. G.},
	journal={Geom. Dedicata},
	volume={123},
	date={2006},
	pages={153--169}
}

\bib{bernig_fu11}{article}{
	title={Hermitian integral geometry},
	author={Bernig, A.},
	author={Fu, J. H. G.},
	journal={Ann. of Math. (2)},
	volume={173},
	date={2011},
	pages={907--945}
}

\bib{bernig_etal12}{article}{
	title={Integral geometry of complex space forms},
	author={Bernig, A.},
	author={Fu, J. H. G.},
	author={Solanes, G.},
	eprint={arXiv:1204.0604v1 [math.DG]},
} 	

\bib{blair10}{book}{
   author={Blair, D. E.},
   title={Riemannian geometry of contact and symplectic manifolds},
   series={Progress in Mathematics},
   edition={2},
   publisher={Birkh\"auser Boston Inc.},
   place={Boston, MA},
   date={2010},
}

\bib{blaschke55}{book}{
   author={Blaschke, W.},
   title={Vorlesungen \"uber Integralgeometrie},
   language={German},
   publisher={Deutscher Verlag der Wissenschaften},
   place={Berlin},
   date={1955},
}

\bib{broecker_dieck85}{book}{
   author={Br{\"o}cker, T.},
   author={tom Dieck, T.},
   title={Representations of compact Lie groups},
   series={Graduate Texts in Mathematics},
   volume={98},
   publisher={Springer-Verlag},
   place={New York},
   date={1985},
}

\bib{chern52}{article}{
   author={Chern, S.-S.},
   title={On the kinematic formula in the Euclidean space of $n$ dimensions},
   journal={Amer. J. Math.},
   volume={74},
   date={1952},
   pages={227--236},

}

\bib{federer59}{article}{
	title={Curvature measures},
	author={Federer, H.},
	journal={Trans. Amer. Math. Soc.},
	volume={93},
	date={1959},
	pages={418--491}
}

\bib{federer69}{book}{
   author={Federer, H.},
   title={Geometric measure theory},
   series={Die Grundlehren der mathematischen Wissenschaften},
   publisher={Springer-Verlag New York Inc., New York},
   date={1969},
   pages={xiv+676},
}

\bib{fu90}{article}{
	title={Kinematic formulas in integral geometry},
	author={Fu, J. H. G.},
	journal={Indiana Univ. Math. J.},
	volume={39},
	date={1990},
	pages={1115--1154}
}

\bib{fu94}{article}{
   author={Fu, J. H. G.},
   title={Curvature measures of subanalytic sets},
   journal={Amer. J. Math.},
   volume={116},
   date={1994},
   pages={819--880},
}

\bib{fu06}{article}{
	title={Structure of the unitary valuation algebra},
	author={Fu, J. H. G.},
	journal={J. Differential Geom.},
	volume={72},
	date={2006},
	pages={509--533}
}

\bib{fu11}{article}{
	title={Algebraic integral geometry},
	author={Fu, J. H. G.},
	eprint={	arXiv:1103.6256v1 [math.DG]},
}

\bib{haberl11}{article}{
   author={Haberl, C.},
   title={Blaschke valuations},
   journal={Amer. J. Math.},
   volume={133},
   date={2011},
   number={3},
   pages={717--751},
}

\bib{haberl_schuster09}{article}{
   author={Haberl, C.},
   author={Schuster, F. E.},
   title={General $L_p$ affine isoperimetric inequalities},
   journal={J. Differential Geom.},
   volume={83},
   date={2009},
   pages={1--26},
}

\bib{humphreys72}{book}{
   author={Humphreys, J. E.},
   title={Introduction to Lie algebras and representation theory},
   series={Graduate Texts in Mathematics},
   volume={9},
   publisher={Springer-Verlag},
   place={New York},
   date={1972},
}
	
\bib{klain_rota97}{book}{
   author={Klain, D. A.},
   author={Rota, G.-C.},
   title={Introduction to geometric probability},
   series={Lezioni Lincee},
   publisher={Cambridge University Press},
   place={Cambridge},
   date={1997},
}

\bib{klain00}{article}{
	title={Even valuations on convex bodies},
	author={Klain, D. A.},
	journal={Trans. Amer. Math. Soc.},
	volume={352},
	date={2000},
	pages={71--93}
}

\bib{ludwig02}{article}{
   author={Ludwig, M.},
   title={Projection bodies and valuations},
   journal={Adv. Math.},
   volume={172},
   date={2002},
   pages={158--168},
}

\bib{ludwig03}{article}{
   author={Ludwig, M.},
   title={Ellipsoids and matrix-valued valuations},
   journal={Duke Math. J.},
   volume={119},
   date={2003},
   pages={159--188},
}
		
\bib{ludwig06}{article}{
   author={Ludwig, M.},
   title={Intersection bodies and valuations},
   journal={Amer. J. Math.},
   volume={128},
   date={2006},
   pages={1409--1428},
}

\bib{ludwig10}{article}{
   author={Ludwig, M.},
   title={Minkowski areas and valuations},
   journal={J. Differential Geom.},
   volume={86},
   date={2010},
   pages={133--161},
}

\bib{ludwig_reitzner10}{article}{
   author={Ludwig, M.},
   author={Reitzner, M.},
   title={A classification of ${\rm SL}(n)$ invariant valuations},
   journal={Ann. of Math. (2)},
   volume={172},
   date={2010},
   pages={1219--1267},
}

\bib{lutwak_etal00}{article}{
   author={Lutwak, E.},
   author={Yang, D.},
   author={Zhang, G.},
   title={$L_p$ affine isoperimetric inequalities},
   journal={J. Differential Geom.},
   volume={56},
   date={2000},
   pages={111--132},
}

\bib{lutwak_etal10}{article}{
   author={Lutwak, E.},
   author={Yang, D.},
   author={Zhang, G.},
   title={Orlicz centroid bodies},
   journal={J. Differential Geom.},
   volume={84},
   date={2010},
   pages={365--387},
}

\bib{mcmullen77}{article}{
   author={McMullen, P.},
   title={Valuations and Euler-type relations on certain classes of convex
   polytopes},
   journal={Proc. London Math. Soc. (3)},
   volume={35},
   date={1977},
   pages={113--135},
}

\bib{nijenhuis74}{article}{
   author={Nijenhuis, A.},
   title={On Chern's kinematic formula in integral geometry},
   journal={J. Differential Geom.},
   volume={9},
   date={1974},
   pages={475--482},
}

\bib{parapatits12}{article}{
	title={$SL(n)$-contravariant $L_p$-Minkowski valuations},
	author={Parapatits, L.},
	journal={to appear in Trans. Amer. Math. Soc.},
}

\bib{parapatits_schuster12}{article}{
	title={The Steiner formula for Minkowski valuations},
	author={Parapatits, L.},
	author={Schuster, F. E.},
	journal={Adv. Math.},
	volume={230},
	date={2012},
	pages={978--994},
}

\bib{parapatits_wannerer12}{article}{
	title={On the inverse Klain map},
	author={Parapatits, L.},
	author={Wannerer, T.},
	journal={to appear in Duke Math. J.},
}

\bib{rumin94}{article}{
	title={Formes diff\'{e}rentielles sur les vari\'{e}t\'{e}s de contact},
	author={Rumin, M.},
	journal={J. Differential Geom.},
	volume={39},
	date={1994},
	pages={281--330}
}

\bib{schneider71}{article}{
	title={On Steiner points of convex bodies},
	author={Schneider, R.},
	journal={Israel J. Math.},
	volume={9},
	date={1971},
	pages={241--249}
}

\bib{schneider75}{article}{
   author={Schneider, R.},
   title={Kinematische Ber\"uhrma\ss e f\"ur konvexe K\"orper und
   Integralrelationen f\"ur Oberfl\"achenma\ss e},
   journal={Math. Ann.},
   volume={218},
   date={1975},
   pages={253--267},
   language={German},
}

\bib{schneider_book}{book}{
	title={Convex bodies: The Brunn-Minkowski theory},
	author={Schneider, R.},
	publisher={Cambridge University Press},
	address={Cambridge},
	date={1993}
}

\bib{schuster08}{article}{
   author={Schuster, F. E.},
   title={Valuations and Busemann-Petty type problems},
   journal={Adv. Math.},
   volume={219},
   date={2008},
   pages={344--368},
}

\bib{schuster10}{article}{
   author={Schuster, F. E.},
   title={Crofton measures and Minkowski valuations},
   journal={Duke Math. J.},
   volume={154},
   date={2010},
   pages={1--30},
}

\bib{takeuchi94}{book}{
	title={Modern spherical functions},
	author={Takeuchi, M.},
	publisher={American Mathematical Society},
	address={Providenc, RI},
	date={1994},

}

\bib{wannerer12}{article}{
   author={Wannerer, T.},
   title={${\rm GL}(n)$ covariant Minkowski valuations},
   journal={Indiana Univ. Math. J.},
   volume={60}, 
   date={2011},
   pages={1655--1672}, 
}

\bib{wilf94}{book}{
   author={Wilf, H. S.},
   title={generatingfunctionology},
   edition={2},
   publisher={Academic Press Inc.},
   place={Boston, MA},
   date={1994},
   pages={x+228},
}

\end{biblist}
\end{bibdiv}

\end{document}